\newcommand{\ds}{\displaystyle}
\newcommand{\uu}{\vec{u}}
\newcommand{\qq}{\vec{q}}
\providecommand{\abs}[1]{\lvert#1\rvert}
\providecommand{\divv}[1]{\nabla\cdot#1}
\providecommand{\norm}[1]{\lVert#1\rVert}
\def\mathcolor#1#{\@mathcolor{#1}}
\def\@mathcolor#1#2#3{%
  \protect\leavevmode
  \begingroup
    \color#1{#2}#3%
  \endgroup
}
\providecommand{\keywords}[1]{\textbf{\textit{Index terms---}} #1}
\newtheorem{remark}{Remark}
\newtheorem{lemma}{Lemma}
\newtheorem{theorem}{Theorem}
\begin{document}

\title{Robust iterative schemes for non-linear poromechanics}

\author[1]{Manuel Borregales \thanks{Manuel.Borregales@uib.no (\Letter), $^\dagger$Florin.Radu@uib.no, \\$^\ddagger$Kundan.Kumar@uib.no,  	$^\circ$ Jan.Nordbotten@uib.no}}
\author[1]{Florin A. Radu$^\dagger$}
\author[1]{Kundan Kumar$^\ddagger$}
\author[1,2]{Jan M. Nordbotten$^\circ$}
\affil[1]{Department of Mathematics, University of Bergen, \\Norway.}
\affil[2]{Department of Civil and Environmental Engineering, Princeton University, Princeton, NJ, USA}
%
%\affil[**]{Affil2}
%\author{Manuel Borregales$^1$  \and Florin A. Radu$^1$ \and Kundan Kumar$^1$  \and Jan M. Nordbotten$^1$\\
%$^1$Department of Mathematics, University of Bergen, \\Norway. \\
%Emails: \{Manuel.Borregales, Florin.Radu,\\ Kundan.Kumar,  Jan.Nordbotten\}\texttt{@uib.no}}

%\institute{$^1$Department of Mathematics, University of Bergen, Norway. \\
%Emails: {Manuel.Borregales, Florin.Radu, Kundan.Kumar, % Jan.Nordbotten}\texttt{@uib.no}}

% The correct dates will be entered by the editor

\hyphenation{li-near-pro-blem-rigu-rous Never-the-less poro-me-cha-nics appea-ring acce-le-ra-tion}

\maketitle
%\linenumbers
\begin{abstract}

%\textcolor{red}{splitted linearized scheme}
%In this paper, 
%We consider a non-linear extension of Biot's model for poromechanics, wherein both the fluid flow and mechanical deformation are allowed to be non-linear. We use Galerkin finite elements and mixed finite elements for the spatial discretization of the mechanics and flow, respectively. Backward Euler is used for the discretization in time. We propose two iterative solvers for solving the non-linear problems appearing within each time step: a splitting algorithm extending the undrained split and fixed stress methods to non-linear problems, and a monolithic $L$-scheme. The convergence of both schemes is shown theoretically. Numerical examples show the applicability of the schemes and validate the theoretical results.

We consider a  non-linear  extension  of  Biot's  model  for  poromechanics, wherein both the fluid flow and mechanical deformation are allowed to be non-linear. We perform an implicit discretization in time (backward  Euler) and propose  two iterative schemes for solving the non-linear problems appearing within each time step: a splitting algorithm extending the undrained split and fixed stress methods to non-linear problems, and a monolithic L-scheme. The convergence of both schemes is shown rigorously. Illustrative numerical examples are presented to confirm the applicability of the schemes and validate the theoretical results. 

%Comparation against L-scheme
\keywords{Biot's model  \and L-schemes \and MFEM  \and convergence analysis    \and coupled problems \and poromechanics}
% \PACS{PACS code1 \and PACS code2 \and more}
% \subclass{MSC code1 \and MSC code2 \and more}
\end{abstract}

%\clearpage
\section{Introduction}
\label{intro}

Poromechanics, that is to say the coupled flow and mechanics of porous media, plays a crucial role in many societal relevant applications. These include geothermal energy extraction, energy storage in the subsurface, $CO_2$ sequestration and understanding of biological tissues. The increased role played by computers for the development and optimisation of (industrial) technologies for these applications enhances the need for improved mathematical models and robust numerical solvers for poromechanics.

The most common mathematical model for coupled flow and mechanics in porous media is the linear, quasi-stationary Biot model \cite{biot1941general,biot1954}. The model consists of two fully coupled partial differential equations, representing balance of forces for the mechanics and conservation of mass for (single-phase) flow in porous media. In terms of modelling, non-linear extensions are considered in \cite{coussy1995mechanics,Book_Lewis,Fullycouple4} or non-stationary Biot, i.e. Biot-Allard model in \cite{Mikelic3}. These coupled (linear or non-linear) equations are in practice impossible to solve analytically, and very challenging to solve numerically. 

It has been widely acknowledged that only simulation with fully coupled fluid potential and mechanical deformation is accurate for non-linear poro-mechanical processes \cite{FlorianFCP,Petersen2012,Prevost2013}. Nevertheless, due to the complexities associated with monolithic solvers for the full non-linear problem, industry standard remains to use so-called weakly, or iteratively, coupled approaches \cite{Settari_2001,Petersen2012}. These de-coupled approaches lead to significant numerical diffusion, which may also mask fundamental numerical incompatibilities between the spatial discretizations. 

As a response to this current status, the objective of this paper is to show how a simple linearization technique, i.e. the $L$-scheme (see \cite{List2016,RaduPopKnabner2004,RaduNPK15} for application of this method to Richards' equation or two-phase flow in porous media) can be combined with a splitting algorithm (known both as the undrained split or the fixed stress method \cite{Jakub_2016,Kim20112094,Kim20111591,RIS_0,Mikelic,Mikelic2,Settari}) to obtain a robust and efficient iterative scheme for solving a non-linear Biot model.

In this paper, we use for concreteness linear conformal Galerkin elements for the discretization in space of the mechanics equation and mixed finite elements for the flow equation \cite{GayX2,wheeler}. Precisely, the lowest order Raviart-Thomas elements are used \cite{brezzi2012mixed}. We expect, however, that the solution strategy discussed herein will be applicable to other combinations of spatial discretizations such as those discussed in \cite{Jan_2016,Carmen_2016} and the references therein. Backward Euler is used for the temporal discretization. Multirate time discretizations or higherorder space-time Galerkin method have been also proposed for the linear Biot model in \cite{Kundan_2016} and \cite{Uwe3}, respectively. We propose two new iterative methods for solving the resulting non-linear equations at the new time-level: a splitting algorithm based on a combination between the undrained split and the fixed stress methods (as mentioned above)  and a monolithic approach based on the same linearization technique. The existence and uniqueness of a solution for the both formulations, as well as their global, linear convergence are rigorously  shown.  To the best of our knowledge, these are the first rigorous convergence results for iterative (monolithic or splitting) schemes in the non-linear case. At the same time, we also acknowledge that while the non-linearities considered here are representative for compressible materials, they are simpler than those encountered in more complex applications such as multiphase flow. 

To summarise, the new contributions of this paper are
\begin{itemize}
\item[$\bullet$]{We propose splitting and a monolithic L-scheme for a non-linear Biot model.}
\item[$\bullet$]{The linear convergence of both schemes is rigorously shown in energy norms.}
% of the fixed stress splitting algorithm for the linear Biot. The optimal splitting constant turns out to be the same like the one discovered in \cite{Mikelic2}}
\item[$\bullet$]{We provide a benchmark for the convergence of splitting algorithms for the non-linear Biot model, including a comprehensive comparison between the splitting and monolithic L-scheme.}
\end{itemize}

%\nocite{Biotsttlement} \nocite{Chin} \nocite{Coussy1989} \nocite{Fung} \nocite{Girault} \nocite{Kim20111591} \nocite{Kim20111591} \nocite{mikelic2013phase} \nocite{SHOWALTER2000310} \nocite{Settari}

The paper is structured as follows. In the next section we present the mathematical model and the discretization, including the fully discrete computational schemes. In Section \ref{sec:convergence} we analyse rigorously the proposed schemes. Numerical results are presented in Section \ref{sec:numericalresults}, and we conclude the paper in Section \ref{sec:conclusions}.

\subsection{Notations}
In this paper we use typical notations from functional analysis.  Let $\Omega \subset \mathbb{R}^d$ be an open and bounded  domain with a Lipschitz continuous boundary $\partial \Omega$, with $d \in \{1, 2, 3\}$ being the dimension of the space. We denote by $L^2(\Omega)$ the space of square integrable functions and by $H^1(\Omega)$ the Sobolev space
$$H^1(\Omega) = \{v \in L^2(\Omega)\,;\, \nabla\,v \in L^2(\Omega)^d\}.$$

Furthermore,  $H^1_0(\Omega) $ will be the space of functions in $H^1(\Omega)$ vanishing on $\partial \Omega$ and $H({\rm div};\Omega)$  the space of vector valued function having all the components and the divergence in $L^2(\Omega)$. We will use bold face notation to specify when dealing with vectors. We denote by $ \langle \cdot , \cdot \rangle $ the inner product on $L^2(\Omega)$ and $ \norm{v} = \sqrt{\langle v,v \rangle} $ the associated norm. Let further $[0,T]$ be a time interval, with $T$  denoting the final computational time. The notations for the variables and parameters of Biot's model are summarized in Table \ref{table:nomenclature}.
 
 \begin{table}[t]
\centering
\caption{Nomenclature}\label{table:nomenclature}
\begin{tabular}{lrclr}
Parameters            &&&Variables &   \\
\hline
Lam\'e's first parameter & $\lambda$&&Displacement&$\uu$  \\
Lam\'e's second parameter & $\mu$ &&Pressure& $p$ \\
Dynamic fluid viscosity     & $\mu_f$&&Mass flux&$\qq$  \\
Kinematic fluid viscosity     & $\nu_f$&&& \\
Fluid density     & $\rho_f$&&&  \\
Source term & $S_f$&&& \\
Biot's constant     &$ \alpha$ &&&  \\
Biot's modulus & $M$&&& \\
Permeability scalar value           &$ k $ &&& \\
Gravity vector           &$ \vec{g} $ &&& \\
Body forces & $\vec{f}$&&& \\
Effective stress tensor & $\vec{\sigma}$&&& \\
Cauchy stress tensor & $\vec{\sigma}^{por}$&&& \\
Strain tensor & $\varepsilon$
\end{tabular}
\end{table}
 %  \textcolor{red}{and $ \langle \vec{v} : \vec{w} \rangle$ defined this one}
  
  %u vector compos by u1 and u2

\section{Mathematical model and discretization}
\label{sec:1}

We use Biot's consolidation model in the domain $\Omega \times [0,T]$ considering a non-linear elastic, homogeneous, isotropic, porous medium saturated with a compressible fluid.
The Cauchy stress tensor $\vec{\sigma}^{por}$ can be expressed in terms of the  fluid pressure $p$ and the displacement $\uu$ as
\begin{equation}
 \vec{\sigma}^{por}(\uu,p)= \vec{\sigma}(\uu)- \alpha p \vec{I},
 \label{sigma}
\end{equation}
where $\vec{I}$ is the identity tensor and $\alpha$ is the dimensionless Biot coefficient, see e.g. \cite{biot1941general,biot1954,detournay1993fundamentals,coussy1995mechanics},  and $\vec{\sigma}(\uu)$  the extended non-linear stress tensor, given by 
 \begin{equation}
\vec{\sigma}(\uu) =2 \mu \varepsilon (\uu) + h( \divv{\uu})\vec{I}.
\label{Consitutive}
\end{equation}
Above, $\mu>0$ is the constant shear modulus, $\varepsilon$ the strain (or symmetric gradient) tensor, i.e. $\varepsilon(\uu)=\frac{1}{2}\left( \nabla \uu+(\nabla \uu)^t\right)$. The non-linear term $h(\cdot)$ models the volumetric stress. Under quasi-static assumptions (neglecting the acceleration), the governing  equation for mechanical deformation of the solid-fluid system can be expressed as
\begin{equation}
 -\divv{\vec{\sigma}^{por}}=\vec{f},
 \label{Fuerza}
 \end{equation}
where $\vec{f} $ is a body force in $\Omega$. 
Substituting the constitutive relation \eqref{Consitutive} into \eqref{sigma} and expanding \eqref{Fuerza}, we get
 $$-\nabla \cdot [2 \mu \varepsilon (\uu) + h (\divv {\uu}) \vec{I}] + \alpha \divv p \vec{I}  =  \vec{f}.$$
The volumetric flux through porous medium $\Omega$ is modelled using Darcy's law
%$$\qq= -\frac{K}{\mu_f}\left( \nabla p - \rho_f \vec{g}\right)$$
$$\qq_{v}= -\frac{K}{\mu_f} \left[ \nabla p -\rho_f \vec{g} \right],$$
where $\mu_f$, $\rho_f$ are the dynamic viscosity and density of the fluid respectively  and $\vec{g}$  is the gravity vector. 

\begin{remark}
For simplicity, we consider the permeability to be a scalar function, but the results of the paper can be extended without difficulties to the tensor case.
\end{remark}

The next equation is the mass balance for the fluid and reads as
\begin{equation}
 \frac{\partial \varphi}{\partial t}= -\divv \qq  + S_f,
 \label{varphi}
\end{equation}
where $\qq=\rho_f \qq_v$ is the mass flux, $S_f$ is a source term and $\varphi$ the mass of the fluid in the medium, which is proportional with the volume. %(we assume here a constant density of the fluid).
 Further, $\varphi$ can be expressed in terms of the fluid pressure $p$ and $\divv u$
\begin{equation}
\varphi = b(p) + \alpha \divv \uu.
 \label{varphi2}
\end{equation}
This approach extends the classical Biot model, as e.g. in \cite{biot1941general,Kim20112094,Mikelic} by allowing for a more general equation-of-state, here given by the density as a non-linear function $b(\cdot)$. Putting together the equations \eqref{varphi}-\eqref{varphi2} we obtain
$$\frac{\partial}{\partial t} \left( b(p) + \alpha \nabla \cdot \uu \right)+ \divv \qq  = S_f.   $$

Finally, the non-linear Biot's model considered in this paper reads as (the variables and coefficients are summarized in Table \ref{table:nomenclature})
\begin{align}
    -\nabla \cdot [2 \mu \varepsilon (\uu) + h (\nabla \cdot \uu) ] +\alpha \nabla \cdot (p I) & =  \vec{f} & \text{ in } \Omega \times ]0,T[, \label{DT_displacement}\\
 \qq & = -\frac{K}{\nu_f} \left( \nabla p -\rho_f\vec{g} \right) & \text{ in } \Omega \times ]0,T[, \label{DT_flow} \\
  \partial_t \left( b(p) + \alpha \nabla \cdot \uu \right)+  \nabla \cdot \qq & = S_f  & \text{ in } \Omega \times ]0,T[.       \label{DT_pressure} %\\     
 %    \uu & =\vec{0} & \text{ on } \partial \Omega \times [0,T],\nonumber \\
  %     p & = 0& \text{ on }  \partial \Omega \times [0,T], \nonumber   \\
    %     \vec{u} &= \uu_0& \text{ in } \Omega \times \left\lbrace 0 \right\rbrace,    \nonumber      \\
   %      p & = p_0 &\text{ in } \Omega \times \left\lbrace 0 \right\rbrace.\nonumber 
\end{align} 
To complete the model we consider homogeneous Dirichlet boundary conditions (BC) and initial conditions given by $\vec{u} = \uu_0$ and $p  = p_0$ at time $t = 0$. The functions  $ \uu_0,\ p_0$ are supposed to be given (and to be sufficiently regular). 
\begin{remark} We consider here homogeneous Dirichlet BC just for the sake of simplicity. The analysis in Section \ref{sec:convergence} can be extended to more general BC, as considered also in the numerical examples in Section \ref{sec:numericalresults}.
\end{remark}

\begin{remark} {\bf Linear Biot's model.} The linear Biot model, as in e.g. \cite{Mikelic,Mikelic2}, is a particular case of the non-linear model \eqref{DT_displacement}-\eqref{DT_pressure} and it can be immediately obtained by taking $b(p) := \frac{p}{M}$  and $h(\divv \uu) := \lambda \divv \uu$, where $M$ is a compressibility constant and $\lambda$ the Lame parameter. 
%In Section \ref{sec:convergence} we will give a new proof for the convergence of the splitting algorithm proposed in \cite{RIS_0} and analized in \cite{Mikelic2}. For the first time, the proof of convergence is obtained in energy norms.
\end{remark}
 
%\begin{align}
 %      -\nabla \cdot [2 \mu \varepsilon (\uu) + \lambda \nabla \cdot \uu ] + \alpha \nabla \cdot p \vec{I}& =  \vec{f} & \text{ in } \Omega \times [0,T], \label{Structural}\\
%   \mu_f K^{-1}\qq +\nabla p& = \rho_f \vec{g} & \text{ in } \Omega \times [0,T], \label{Darcy}\\
  %      \partial_t \left( \frac{p}{M} + \alpha \nabla \cdot \uu \right)+  \nabla \cdot \qq & = S_f & \text{ in } \Omega \times [0,T].     
  %      \label{MassConservacions}
 % \end{align} 
%This is exactly the linear Biot's model as considered in \cite{Mikelic,Mikelic2}.

%In section \ref{LinearCASEproof} we will give a new proof for the convergence of the splitting algorithm proposed in \cite{RIS_0} and analized in \cite{Mikelic2}. For the first time, the proof of convergence is obtained in energy norms.

%\subsection{Fully implicit discretization}

\noindent {\bf Fully implicit discretization of the Biot model  \eqref{DT_displacement}-\eqref{DT_pressure}.}

 For the discretization of the considered non-linear Biot model we use conformal Galerkin finite elements for the displacement variable and mixed finite elements for the flow unknowns \cite{GayX2,wheeler}.  Precisely, we use linear elements for the displacement and lowest order Raviart-Thomas elements \cite{brezzi2012mixed} for the flow. Backward Euler is used for the temporal discretization.

Let  $\Omega = \cup_{K \in \mathcal{K}_h} K$ be a regular decomposition of $\Omega$ into $d$-simplices. We denote by $h$ the mesh size. The discrete spaces are given by
\begin{align}
& \hspace{1cm}{\mathbf Z}_h = \{\mathbf{z}_h \in {H^1(\Omega)}^d\,;\,  {{\mathbf{z}}_h}_{|K} \in {{\mathbb{P}_1^d}} \,, \,   {\forall} K \in  \mathcal{K}_h\}, \nonumber\\
& \hspace{1cm}Q_h = \{w_h \in L^2(\Omega) \,;\, {w_h}_{|K} \in {{\mathbb{P}}_0} \, ,\,  {\forall} K \in  \mathcal{K}_h \}, \nonumber\\
& \hspace{1cm} {\mathbf{V}}_h = \{{\vec{v}}_h \in {H(\mathrm{div}; \Omega)}\,;\,  {\vec{v}}_{h|K} (\vec{x})=\vec{a}+b \vec{x}, \, \vec{a} \in  \mathbb{R}^d,\, b \in \mathbb{R} ,  \, {\forall} K \in  \mathcal{K}_h\},\nonumber
\end{align}
where ${\mathbb{P}}_0, {\mathbb{P}}_1$ denote the spaces of constant functions and of linear polynomials, respectively.

 For $N \in \mathbb{N}$, we discretize the time interval uniformly and define the time step $\tau = \frac{T}{N}$ and $t_n = n\tau$. We use the index $n$ for the primary variable $p^{n}$,  $\qq^{n}$, $\uu^{n}$ at corresponding time step $t_n$.

We can now formulate a fully discrete variational formulation for the non-linear Biot model \eqref{DT_displacement}-\eqref{DT_pressure}.

%\begin{definition}
{\bf Problem $P_h^n$}. Given $\left(p_h^{n-1},\qq_h^{n-1},\uu_h^{n-1}\right)$, find $\left(p_h^n,\qq_h^n,\uu_h^n\right) \in W_h  \times \mathbf{V}_h \times \mathbf{Z}_h$ such that
\begin{align}
2\mu   \langle  \varepsilon (\uu_h^n): \varepsilon (\vec{z}_h)\rangle +  \langle  h(\divv{ \uu_h^n}),\divv{\vec{z}_h}\rangle -\alpha \langle  p_h^n , \divv{\vec{z}_h}\rangle& =\langle \vec{f} ,\vec{z}_h\rangle ,\label{VF_displacement}\\
  \nu_f \langle K^{-1} \qq_h^n , \vec{v}_h \rangle - \langle   p_h^n ,\divv{\vec{v}_h} \rangle& = \langle \rho_f \vec{g},\vec{v}_h  \rangle , \label{VF_flow} \\
   \langle b(p_h^n),w_h\rangle  + \alpha \langle \divv{\uu_h^n},w_h\rangle + \tau \langle \nabla \cdot \qq_h^n ,w_h\rangle&=\nonumber\\
 \tau \langle f  ,w_h\rangle+   \langle b(p_h^{n-1}),w_h\rangle +&\alpha \langle \divv{\uu_h^{n-1}} w_h\rangle,
        \label{VF_pressure} 
\end{align}
 for all  $\vec{z}_h\in {\mathbf Z}_h $, $\vec{v}_h \in {\mathbf V}_h$ and $w_h \in W_h$.
%\end{definition}
\begin{remark} A continuous variational formulation can be analogously given, see \cite{Mikelic} for the linear case. We will show in Section \ref{sec:convergence} the existence and uniqueness of a solution for the fully discrete variational scheme above, by using the Banach fixed point theorem. Existence and uniqueness for the continuous case can be shown similarly. Error estimates can be also obtained, following the lines of \cite{wheeler} in combination with the techniques used in this paper to deal with the nonlinearities. Nevertheless, this is beyond the scope of this paper. 
\end{remark}

\noindent {\bf Non-linear solvers: a splitting L-scheme and a monolithic L-scheme.}
  	 	
The non-linear system \eqref{VF_displacement}-\eqref{VF_pressure}  can be solved monolithically by using the Newton method or a robust, linear convergent linearization scheme (see e.g.\cite{RaduPopKnabner2004,RaduNPK15,List2016}) or by using a splitting algorithm \cite{Jakub_2016,RIS_0,Mikelic2}  In this work we present a combination between the undrained split and fixed stress methods, adapted to the non-linear case and a monolithic, fixed point linearization scheme.

We begin by presenting the splitting L-scheme. At each time step (we use $n$ to denote the time index), we first solve the flow equations using the displacement from the last iteration and then, with the new computed pressure, we solve the displacement equation and iterate until convergence is reached. We use $i$ for indexing the iterations. We start the iterations with the solution at the last time step (or the initial values for the first time step), i.e. $p_h^{n,0} = p_h^{n-1}$, $\qq_h^{n,0} = \qq_h^{n-1}$ and $\uu_h^{n,0} = \uu_h^{n-1}$.  We further introduce two positive constants, $L_1$ and $L_2$ which are free to be chosen in order to optimize the scheme. \\

\noindent {\bf A robust splitting L-scheme: the extension of the fixed stress algorithm to non-linear Biot.}
  	
\textbf{Step 1:}  Given  $\uu_h^{n,i} \in {\mathbf Z}_h $, find $p_h^{n,i+1}  \in W_h$ and $\qq_h^{n,i+1}\in  {\mathbf V}_h$ such that there holds for all $\vec{v}_h \in {\mathbf V}_h$ and $w_h \in W_h$
\begin{align}
 \nu_f\langle K^{-1} \qq_h^{n,i+1} , \vec{v}_h \rangle - \langle   p_h^{n,i+1} ,\divv{\vec{v}_h} \rangle& = \langle \rho_f \vec{g},\vec{v}_h  \rangle , \label{S_flow}
   \\
   \langle b(p_h^{n,i}),w_h\rangle  +  L_1  \langle p_h^{n,i+1}-p_h^{n,i},w_h\rangle &\nonumber\\
   + \alpha \langle \divv{\uu_h^{n,i}},w_h\rangle 
  + \tau \langle \nabla \cdot \qq_h^{n,i+1} ,w_h\rangle&=
   \tau \langle S_F  ,w_h\rangle+   \langle b(p_h^{n-1}),w_h\rangle \nonumber\\&+\alpha \langle \divv{\uu_h^{n-1}} w_h\rangle.
        \label{S_pressure} 
\end{align}

\textbf{Step 2:} Given now $p_h^{n,i+1} \in W_h $, find $\uu_h^{n,i+1} \in {\mathbf Z}_h $ such that there holds  for all $\vec{z}_h\in {\mathbf Z}_h $

\begin{align}
2\mu   \langle  \varepsilon (\uu_h^{n,i+1}): \varepsilon (\vec{z}_h)\rangle +
 L_2 \langle \divv\uu_h^{n,i+1}-\divv{\uu_h^{n,i}},\divv{ \vec{z}_h} \rangle  \nonumber\\
 +\langle  h(\divv{ \uu_h^{n,i}}),\divv{\vec{z}_h}\rangle -\alpha\langle p_h^{n,i+1} , \divv{\vec{z}_h}\rangle& =\langle \vec{f} ,\vec{z}_h\rangle. \label{S_displacement}
\end{align}

\begin{remark}
Ideally the constants $L_1$ and $L_2$ should be chosen as small as possible (in order to increase the convergence rate, as it will be shown below), but large enough to ensure the convergence of the scheme. This will be discussed in detail in Section \ref{sec:convergence} (from a theoretical point of view) and in Section \ref{sec:numericalresults} (for practical computations).
\end{remark}

We introduce now a monolithic L-scheme, called the  $L$-scheme as alternative to the splitting method proposed above. The scheme is inspired by the works \cite{RaduPopKnabner2004,RaduNPK15,List2016}, where a similar idea is applied for the Richards equation.

We again start the iterations with the solutions at the last time step: $p_h^{n,0} = p_h^{n-1}$, $\qq_h^{n,0} = \qq_h^{n-1}$ and $\uu_h^{n,0} = \uu_h^{n-1}$ (recall that $n$ denotes the time step index and $i$ is the iteration step). Let $L_1$ and $L_2$ be two positive constants.\\ %Then the $L-$scheme scheme reads as
 
\noindent {\bf A monolithic L-scheme.}
 
  Given  $\uu_h^{n,i} \in {\mathbf Z}_h $, $p_h^{n,i}  \in W_h$ and  $\qq_h^{n,i}\in  {\mathbf V}_h$, find  $\uu_h^{n,i+1} \in {\mathbf Z}_h $,  $p_h^{n,i+1}  \in W_h$ and $\qq_h^{n,i+1}\in  {\mathbf V}_h$ such that there holds for all $\vec{z}_h\in {\mathbf Z}_h $, $\vec{v}_h \in {\mathbf V}_h$ and $w_h \in W_h$ 
\begin{align}
2\mu   \langle  \varepsilon (\uu_h^{n,i+1}): \varepsilon (\vec{z}_h)\rangle +
 L_2 \langle \divv\uu_h^{n,i+1}-\divv{\uu_h^{n,i}},\divv{ \vec{z}_h} \rangle  \nonumber\\
 +\langle  h(\divv{ \uu_h^{n,i}}),\divv{\vec{z}_h}\rangle -\alpha\langle p_h^{n,i+1} , \divv{\vec{z}_h}\rangle& =\langle \vec{f} ,\vec{z}_h\rangle ,\label{LL_displacement}
  \end{align}
  and
 \begin{eqnarray}    
 %\begin{array}{l}
  \nu_f\langle K^{-1} \qq_h^{n,i+1} , \vec{v}_h \rangle - \langle   p_h^{n,i+1} ,\divv{\vec{v}_h} \rangle& = & \langle \rho_f \vec{g},\vec{v}_h  \rangle , \label{LL_flow} \\
 \langle b(p_h^{n,i}),w_h\rangle  +  L_1  \langle p_h^{n,i+1}-p_h^{n,i},w_h\rangle  &+& \alpha \langle \divv{\uu_h^{n,i+1}},w_h\rangle \nonumber  \\
    +  \tau \langle \nabla \cdot \qq_h^{n,i+1} ,w_h\rangle &= & \tau \langle S_F  ,w_h\rangle +   \langle b(p_h^{n-1}),w_h\rangle +\alpha \langle \divv{\uu_h^{n-1}} w_h\rangle,  \nonumber  \\  \label{LL_pressure} 
%\end{array}	
\end{eqnarray}	

The convergence of the proposed schemes will be studied theoretically in Section \ref{sec:convergence} and numerically in Section \ref{sec:numericalresults}.

\section{Convergence analysis}
\label{sec:convergence}
In this section we will show the convergence of the splitting L-scheme \eqref{S_flow} - \eqref{S_displacement} and of the monolithic L-scheme \eqref{LL_displacement} - \eqref{LL_pressure} for the non-linear Biot problem \eqref{VF_displacement} - \eqref{VF_pressure}. For this we will combine the techniques developed in \cite{RaduPopKnabner2004,RaduNPK15} with the ones in \cite{Mikelic2}.
In the following we will use the algebraic identity
\begin{eqnarray}
%\langle x,y \rangle & =  &\frac{\norm{x+y}^2}{4}-\frac{\norm{x-y}^2}{4}, \label{algebraic_ident_1}\\
 \langle x-y,x\rangle &=  &\frac{\norm{x}^2}{2}+\frac{\norm{x-y}^2}{2}-\frac{\norm{y}^2}{2}\label{algebraic_ident_2}
\end{eqnarray}
and Young's inequality 
$$ \abs{ab}\leq \frac{a^2}{2\delta} + \frac{\delta b^2}{2}, \ \forall \delta >0.$$ 

We will also use the next lemma in the theorems below. The proof can be found e.g. in \cite{Thomas}.
\begin{lemma}
\label{thomas}
Given a $w_h \in W_h$ there exists $\vec{v}_h \in {\mathbf V}_h$ satisfying
$$ \divv {\vec{v}_h} = w_h \ \ \text{and} \ \ \norm{\vec{v}_h} \leq C_{\Omega,d}\norm{w_h},$$
with $C_{\Omega,d} > 0 $ not depending on $w_h$ or mesh size.
 \end{lemma}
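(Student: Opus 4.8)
The plan is to recognize this lemma as the discrete inf--sup (Ladyzhenskaya--Babu\v{s}ka--Brezzi) property for the lowest-order Raviart--Thomas pairing $\mathbf{V}_h \times W_h$, phrased as surjectivity of the divergence with a mesh-independent bounded right inverse. Since the restriction of any $\vec{v}_h\in\mathbf{V}_h$ has the form $\vec{a}+b\vec{x}$, its divergence is the elementwise constant $db\in W_h$, so $\divv{\vec{v}_h}\in W_h$ and the content of the lemma is exactly surjectivity with control. The standard route I would follow is a two-stage lifting: first solve the problem at the continuous level, where surjectivity of $\nabla\cdot$ is classical, and then transfer the continuous lifting to the discrete space by means of the canonical Raviart--Thomas (Fortin) interpolation operator, exploiting its commuting-diagram property so that the exact constraint $\divv{\vec{v}_h}=w_h$ is preserved.

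First I would establish the continuous lifting: given $w_h\in W_h\subset L^2(\Omega)$, there exists $\vec{v}\in H^1(\Omega)^d$ with
$$\divv{\vec{v}}=w_h \quad\text{and}\quad \norm{\vec{v}}_{H^1}\leq C_1\norm{w_h},$$
with $C_1=C_1(\Omega,d)$. To handle the fact that $w_h$ need not have zero mean, I would split $w_h = (w_h-\bar{w}) + \bar{w}$ with $\bar{w}$ its average over $\Omega$: for the mean-zero part I would invoke the Bogovskii right inverse of the divergence into $H^1_0(\Omega)^d$, valid on any bounded Lipschitz domain, and for the constant part the explicit field $\frac{\bar{w}}{d}\vec{x}$, whose divergence is $\bar{w}$. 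Adding the two lifts gives $\vec{v}$ with the stated $H^1$ bound.

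Next I would apply the lowest-order Raviart--Thomas interpolation operator $\Pi_h:H^1(\Omega)^d\to\mathbf{V}_h$ and set $\vec{v}_h:=\Pi_h\vec{v}$. The commuting-diagram identity $\divv{(\Pi_h\vec{v})}=P_h(\divv{\vec{v}})$, where $P_h$ is the $L^2$-orthogonal projection onto $W_h$, together with $\divv{\vec{v}}=w_h\in W_h$ and hence $P_h w_h=w_h$, yields exactly $\divv{\vec{v}_h}=w_h$. For the norm bound I would use the mesh-independent stability of $\Pi_h$: on each simplex a Piola-transform scaling to the reference element combined with the first-order estimate $\norm{\vec{v}-\Pi_h\vec{v}}\leq C h\,\norm{\vec{v}}_{H^1}$ gives $\norm{\vec{v}_h}\leq\norm{\vec{v}}+C h\,\norm{\vec{v}}_{H^1}\leq C_2\norm{\vec{v}}_{H^1}$ for $h$ below a fixed bound, with $C_2$ depending only on the shape-regularity of the family and on $d$. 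Chaining the two estimates gives $\norm{\vec{v}_h}\leq C_2 C_1\norm{w_h}=:C_{\Omega,d}\norm{w_h}$.

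The main obstacle is the mesh-independent stability of the interpolation operator $\Pi_h$. The canonical Raviart--Thomas degrees of freedom are normal-flux averages on faces, which are \emph{not} bounded on all of $H(\mathrm{div};\Omega)$; this is precisely why the continuous lift must be produced in $H^1(\Omega)^d$ rather than merely in $H(\mathrm{div};\Omega)$, so that $\Pi_h\vec{v}$ is well defined and the reference-element scaling argument applies uniformly in $h$. Once this stability is in hand, the commuting property makes the divergence constraint automatic and the remaining estimates are routine. I note that the resulting constant $C_{\Omega,d}$ inherits its $\Omega$- and $d$-dependence from the Bogovskii constant $C_1$ and is independent of $h$, as claimed.
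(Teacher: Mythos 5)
Your argument is correct and is essentially the proof the paper defers to: the paper does not prove the lemma itself but cites \cite{Thomas}, where (as in Brezzi--Fortin) the standard route is exactly yours --- a bounded continuous right inverse of the divergence into $H^1(\Omega)^d$ (Bogovskii for the mean-zero part plus the radial field for the mean), followed by the lowest-order Raviart--Thomas interpolant, whose commuting-diagram property preserves the constraint and whose $H^1$-stability is mesh-independent under shape regularity. You also correctly identify the one delicate point, namely that the lift must land in $H^1(\Omega)^d$ rather than merely $H(\mathrm{div};\Omega)$ for the canonical interpolant to be defined and uniformly bounded.
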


%\textcolor{red}{add  Lemma from \cite{RaduPopKnabner2004}}

Throughout this section we assume that the following assumptions hold true.
\begin{enumerate}
\item[(A1) ]  $ b(\cdot):\mathbb{R}\rightarrow \mathbb{R}$ is $C^1$ (i.e. derivable, having a continuous derivative), strictly increasing 
and Lipschitz continuous, i.e there exist $b_m>0$ and $L_b$  such that 
 $  b_m \leq b'(\cdot) \leq L_b < +\infty$.
% and $ \abs{b(x)-b(y)}\leq L_b \abs{x-y}$ $\forall x,y\in \mathbb{R}$.

\item[(A2) ]  $ h(\cdot):\mathbb{R}\rightarrow \mathbb{R}$  is $C^1$, strictly increasing 
and Lipschitz  continuous, i.e. there  exist $h_m>0$ and $L_h$  such that
 $  h_m \leq h'(\cdot) \leq L_h < +\infty$.
%and $ \abs{h(x)-h(y)}\leq L_h \abs{x-y}$ $\forall x,y\in \mathbb{R}$.
 
 \item[(A3) ]$K:\mathbb{R}^d\rightarrow \mathbb{R} $  is assumed to be  constant in time and bounded, i.e. there exist  $k_{m}>0$ and $k_{M}$, such that  
$ k_{m}  \leq K(\vec{x})\leq  k_{M},$ $\forall \vec{x}\in \Omega.$
\end{enumerate}

\begin{remark}
The assumptions (A1)-(A2) are obviously satisfied in the linear case, where $b^\prime =\dfrac{1}{M} $ and $h^\prime = \lambda$. 
\end{remark}

\begin{remark}
The Lipschitz continuity and monotonicity of $b(\cdot), h(\cdot) $ and  $b_m, h_m  > 0$ are essential for the proof of the convergence for the splitting L-scheme. In the case of the monolithic L-scheme, one can relax the latter assumption: $b_m, h_m   \ge  0$ is enough to ensure the convergence of the $L$-scheme.
\end{remark}

\noindent{\bf Convergence of the splitting L-scheme.}

Let us denote by $ e_p^i =p^{n,i}-p^n$, $ \vec{e}_{\qq}^i =\qq^{n,i}-\qq^n$ and $ \vec{e}_{\uu}^i =\uu^{n,i}-\uu^n$ the errors at iteration $i$, where ($p^n$, $\qq^n$, $\uu^n$) is the solution of the non-linear problem  \eqref{VF_displacement}-\eqref{VF_pressure} and ($p^{n,i}$, $\qq^{n,i}$, $\uu^{n,i}$) is the solution of  \eqref{S_flow}- \eqref{S_displacement}. We assume here that ($p^n$, $\qq^n$, $\uu^n$) exists, this being rigorously proved later in Theorem \ref{theorem_L_scheme}.%For  $e_\theta^i,\ \theta=p,\qq,\uu$ the superscript $n$ is suppressed to simplify the notations.

The next result is showing the convergence of the splitting algorithm.
\begin{theorem}
Assuming that (A1)-(A3) hold true and that $L_1 \geq L_b$ and $L_2 \geq L_h + \frac{\alpha^2}{ b_m}$, the splitting algorithm \eqref{S_flow}-\eqref{S_displacement} is linearly convergent. There holds
 \begin{equation}
\begin{aligned}
        \left( L_1 - \frac{b_m}{2}+ \frac{\tau k_m^2}{  \nu_f C_{\Omega,d }^2 k_M}\right) \norm{e_p^{i+1}}^2 + \frac{\tau   \nu_f }{ k_M} \norm{\vec{e}^{i+1}_{\qq}}^2
        + L_2 \norm{ \nabla \cdot \vec{e}^{i+1}_{\uu}}^2 \\
      \leq 
      (L_1-b_m) \norm{e_p^{i}}^2 + (L_2-h_m) \norm{ \nabla \cdot \vec{e}^{i}_{\uu}}^2.
\label{S_MainResult}
 \end{aligned}
  \end{equation}
  \end{theorem}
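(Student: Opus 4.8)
The plan is to derive one error equation per relation, test each with the corresponding error, combine the mechanics and mass-balance identities, and close the estimate by playing the artificial stabilisations $L_1,L_2$ against both the nonlinearities and the fixed-stress coupling. Subtracting \eqref{VF_displacement}--\eqref{VF_pressure} from \eqref{S_flow}--\eqref{S_displacement} and writing $g^i:=b(p^{n,i})-b(p^n)$, $\eta^i:=h(\nabla\cdot\uu^{n,i})-h(\nabla\cdot\uu^n)$, the data cancel and I obtain the Darcy identity $\nu_f\langle K^{-1}\vec{e}_{\qq}^{i+1},\vec{v}_h\rangle=\langle e_p^{i+1},\nabla\cdot\vec{v}_h\rangle$, a mass balance containing $\langle g^i,w_h\rangle+L_1\langle e_p^{i+1}-e_p^i,w_h\rangle+\alpha\langle\nabla\cdot\vec{e}_{\uu}^i,w_h\rangle+\tau\langle\nabla\cdot\vec{e}_{\qq}^{i+1},w_h\rangle=0$, and a mechanics relation $2\mu\langle\varepsilon(\vec{e}_{\uu}^{i+1}):\varepsilon(\vec{z}_h)\rangle+L_2\langle\nabla\cdot(\vec{e}_{\uu}^{i+1}-\vec{e}_{\uu}^i),\nabla\cdot\vec{z}_h\rangle+\langle\eta^i,\nabla\cdot\vec{z}_h\rangle-\alpha\langle e_p^{i+1},\nabla\cdot\vec{z}_h\rangle=0$. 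Taking $\vec{v}_h=\vec{e}_{\qq}^{i+1}$ converts the coupling term $\tau\langle\nabla\cdot\vec{e}_{\qq}^{i+1},e_p^{i+1}\rangle$ of the mass balance into $\tau\nu_f\langle K^{-1}\vec{e}_{\qq}^{i+1},\vec{e}_{\qq}^{i+1}\rangle\ge\frac{\tau\nu_f}{k_M}\norm{\vec{e}_{\qq}^{i+1}}^2$ by (A3), which is the flux term of \eqref{S_MainResult}.

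I would then test the mass balance with $e_p^{i+1}$ and the mechanics with $\vec{e}_{\uu}^{i+1}$ and add the two. The coupling contributions $\alpha\langle\nabla\cdot\vec{e}_{\uu}^i,e_p^{i+1}\rangle$ and $-\alpha\langle e_p^{i+1},\nabla\cdot\vec{e}_{\uu}^{i+1}\rangle$ telescope into the single fixed-stress term $-\alpha\langle e_p^{i+1},\nabla\cdot(\vec{e}_{\uu}^{i+1}-\vec{e}_{\uu}^i)\rangle$. Applying the identity \eqref{algebraic_ident_2} to the stabilisations produces the increment buffers $\frac{L_1}{2}\norm{e_p^{i+1}-e_p^i}^2$ and $\frac{L_2}{2}\norm{\nabla\cdot(\vec{e}_{\uu}^{i+1}-\vec{e}_{\uu}^i)}^2$ alongside the $i$- and $(i+1)$-level square norms. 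For the nonlinearities I would split $\langle g^i,e_p^{i+1}\rangle=\langle g^i,e_p^i\rangle+\langle g^i,e_p^{i+1}-e_p^i\rangle$ and use the monotonicity $\langle g^i,e_p^i\rangle\ge b_m\norm{e_p^i}^2$ together with $\norm{g^i}\le L_b\norm{e_p^i}$; because $L_1\ge L_b$, the increment is absorbed into the $L_1$-buffer. Treating $\eta^i$ in the same way delivers the gain $h_m\norm{\nabla\cdot\vec{e}_{\uu}^i}^2$ and leaves an $L_h$-weighted divergence increment.

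The decisive estimate is for the coupling term: I would apply Young's inequality to $\alpha\langle e_p^{i+1},\nabla\cdot(\vec{e}_{\uu}^{i+1}-\vec{e}_{\uu}^i)\rangle$ with weight $\alpha^2/b_m$, producing $\frac{b_m}{2}\norm{e_p^{i+1}}^2+\frac{\alpha^2}{2b_m}\norm{\nabla\cdot(\vec{e}_{\uu}^{i+1}-\vec{e}_{\uu}^i)}^2$. Together with the weight needed to control the $\eta^i$-increment, the total charge on $\norm{\nabla\cdot(\vec{e}_{\uu}^{i+1}-\vec{e}_{\uu}^i)}^2$ is $\frac{L_h+\alpha^2/b_m}{2}$, which the $L_2$-buffer dominates exactly when $L_2\ge L_h+\frac{\alpha^2}{b_m}$; this is where that hypothesis is forced, and the leftover $\frac{b_m}{2}\norm{e_p^{i+1}}^2$ explains the coefficient $L_1-\frac{b_m}{2}$ of $\norm{e_p^{i+1}}^2$. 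The remaining coercivity $\frac{\tau k_m^2}{\nu_f C_{\Omega,d}^2 k_M}\norm{e_p^{i+1}}^2$ comes from Lemma \ref{thomas}: choosing $\vec{v}_h$ with $\nabla\cdot\vec{v}_h=e_p^{i+1}$ and $\norm{\vec{v}_h}\le C_{\Omega,d}\norm{e_p^{i+1}}$ in the Darcy identity gives $\norm{e_p^{i+1}}^2=\nu_f\langle K^{-1}\vec{e}_{\qq}^{i+1},\vec{v}_h\rangle\le\frac{\nu_f C_{\Omega,d}}{k_m}\norm{\vec{e}_{\qq}^{i+1}}\norm{e_p^{i+1}}$, so the Darcy dissipation controls $\frac{\tau k_m^2}{\nu_f C_{\Omega,d}^2 k_M}\norm{e_p^{i+1}}^2$. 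Gathering the $b_m$- and $h_m$-gains on the $i$-level norms opposite the $(i+1)$-level pressure, flux and divergence terms then yields \eqref{S_MainResult}.

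The main obstacle is the coupling term, which simultaneously pollutes the pressure and the mechanical increment while the only counterweight available is the artificial $L_2$-stabilisation. The estimate therefore hinges on splitting the Young weights so that the single $L_2$-buffer absorbs both the $h$-Lipschitz increment and the coupling increment at once --- this is precisely what fixes the threshold $L_2\ge L_h+\frac{\alpha^2}{b_m}$ --- while the residual pressure charge is offset by the monotonicity of $b$ and by the inf--sup/Darcy bound of Lemma \ref{thomas}. Carrying these weights consistently, and confining the constant $C_{\Omega,d}$ to the flux-to-pressure conversion alone, is the delicate bookkeeping that produces the stated coefficients and the conditions $L_1\ge L_b$ and $L_2\ge L_h+\frac{\alpha^2}{b_m}$.
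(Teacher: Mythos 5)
Your overall architecture coincides with the paper's proof: the same three error equations, the same test functions ($e_p^{i+1}$ in the mass balance, $\tau\vec{e}^{\,i+1}_{\qq}$ in Darcy, $\vec{e}^{\,i+1}_{\uu}$ in the mechanics), the same telescoping of the two coupling contributions into the single term $\alpha\langle e_p^{i+1},\nabla\cdot(\vec{e}^{\,i+1}_{\uu}-\vec{e}^{\,i}_{\uu})\rangle$, the same Young weight $\delta_3=b_m$ on that term (which is exactly what produces the $-\frac{b_m}{2}\norm{e_p^{i+1}}^2$ loss and the $\frac{\alpha^2}{2b_m}$ charge on the divergence increment), and the same use of Lemma \ref{thomas} to convert half of the Darcy dissipation into the extra pressure coercivity $\frac{\tau k_m^2}{2\nu_f C_{\Omega,d}^2 k_M}\norm{e_p^{i+1}}^2$. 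You locate the thresholds $L_1\ge L_b$ and $L_2\ge L_h+\alpha^2/b_m$ exactly where the paper does.

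There is, however, one step that does not close as written: the absorption of the increments $\langle g^i,e_p^{i+1}-e_p^i\rangle$ and $\langle \eta^i,\nabla\cdot(\vec{e}^{\,i+1}_{\uu}-\vec{e}^{\,i}_{\uu})\rangle$. You propose to use the monotonicity gain $\langle g^i,e_p^i\rangle\ge b_m\norm{e_p^i}^2$ together with the forward Lipschitz bound $\norm{g^i}\le L_b\norm{e_p^i}$. Estimating the increment by Cauchy--Schwarz and Young then forces a residual charge of at least $\frac{L_b^2}{2L_1}\norm{e_p^i}^2$ if the companion term is to fit inside the buffer $\frac{L_1}{2}\norm{e_p^{i+1}-e_p^i}^2$; the net gain on the $i$-level pressure is therefore $b_m-\frac{L_b^2}{2L_1}$, which under the stated hypothesis $L_1\ge L_b$ can be negative (e.g.\ $L_1=L_b>2b_m$) and in general recovers the required $\frac{b_m}{2}$ only if $L_1\ge L_b^2/b_m$ --- a strictly stronger condition than $L_1\ge L_b$ unless $b$ is linear. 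The paper's proof avoids this by using the Lipschitz assumption in its co-coercive form, $\langle b(x)-b(y),x-y\rangle\ge\frac{1}{L_b}\norm{b(x)-b(y)}^2$, splitting the monotonicity term as $\langle g^i,e_p^i\rangle\ge\frac{b_m}{2}\norm{e_p^i}^2+\frac{1}{2L_b}\norm{g^i}^2$ and choosing the Young weight $\delta_1=1/L_b$ so that the $\norm{g^i}^2$ contributions cancel exactly and the buffer needs only $L_1\ge L_b$. The identical correction is needed for $h$ with $\delta_2=1/L_h$; this is also why the final inequality carries the half-gains $\frac{b_m}{2}$ and $\frac{h_m}{2}$ (i.e.\ $(L_1-b_m)$ and $(L_2-h_m)$ after the concluding multiplication by two) rather than the full gains $b_m$ and $h_m$ you announce. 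With that substitution your argument reproduces the paper's proof.
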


\begin{remark}
The inequality above ensure the convergences  $e_p^i  \rightarrow 0$,  $\vec{e}^{i}_{\qq} \rightarrow 0$ and $ \nabla \cdot \vec{e}^{i}_{\uu} \rightarrow 0$. The convergence $ \nabla \cdot \vec{e}^{i}_{\uu} \rightarrow 0$ follows from \eqref{MassStructural_Proof_6} below.
\end{remark}

\begin{proof}

We start by subtracting \eqref{VF_displacement} - \eqref{VF_pressure} from \eqref{S_displacement} - \eqref{S_flow}, respectively to obtain for all $w_h \in W_h, \vec{v}_h \in Z_h, v_h \in V_h$
\begin{equation}
\begin{aligned}
         \langle b(p_h^{n,i})- b(p_h^{n}),w_h\rangle  + L_1  \langle e_p^{i+1}-e_p^{i},w_h\rangle + \tau \langle \nabla \cdot \vec{e}^{i+1}_{\qq} ,w_h\rangle &\\
         =-\alpha \langle \nabla \cdot \vec{e}^{i}_{\uu} ,w_h\rangle,& \label{MassConservacions_Proof}
       \end{aligned}
\end{equation}

\begin{equation}
 \begin{aligned}
    \nu_f \langle K^{-1}  \vec{e}^{i+1}_{\qq} ,\vec{v}_h\rangle-  \langle     e_p^{i+1},\divv{\vec{v}_h}\rangle& =0 , \label{Darcy_Proof}
  \end{aligned}
\end{equation}

\begin{equation}
\begin{aligned}
       2\mu \langle  \varepsilon(\vec{e}^{i+1}_{\uu}),\varepsilon(\vec{z}_h)\rangle + 
        \langle    h (\nabla \cdot \uu_h^{n,i})   - h (\nabla \cdot \uu_h^n)  ,\nabla \cdot\vec{z}_h\rangle &\\ 
     +L_2 \langle \nabla \cdot (\vec{e}^{i+1}_{\uu}-\vec{e}^{i}_{\uu}),\nabla \cdot \vec{z}_h \rangle 
      =\alpha \langle (e_p^{i+1}), \nabla \cdot \vec{z}_h\rangle.&\label{Structural_Proof}
 \end{aligned}
\end{equation}

The estimates are obtained in a stepwise manner. Accordingly, we handle the flow equations \eqref{MassConservacions_Proof}-\eqref{Darcy_Proof} and displacement equation \eqref{Structural_Proof} in Step 1 and Step 2, respectively. Then, the obtained estimates will be combined in Step 3 to show the result \eqref{S_MainResult}.\\

\noindent{\it Step 1: Flow equations}\\
We first choose  $w_h =e_p^{i+1} \in W_h $ in (\ref{MassConservacions_Proof}) and  $\vec{v}_h=\tau  \vec{e}^{i+1}_{\qq} \in V_h$ in (\ref{Darcy_Proof}),  then add the results  to obtain
\begin{equation}
\begin{aligned}
         \langle b(p_h^{n,i})- b(p_h^{n}),e_p^{i+1}\rangle  + L_1  \langle e_p^{i+1}-e_p^{i},e_p^{i+1}\rangle + \tau   \nu_f \langle K^{-1}  \vec{e}^{i+1}_{\qq} , \vec{e}^{i+1}_{\qq}\rangle &\\
         =-\alpha \langle \nabla \cdot \vec{e}^{i}_{\uu} ,e_p^{i+1}\rangle & . \label{MassConservacions_Proof2}
       \end{aligned}
\end{equation}

By some algebraic manipulations, using \eqref{algebraic_ident_2} and Cauchy-Schwarz and  Young inequalities we get from \eqref{MassConservacions_Proof2}
\begin{equation}
\begin{array}{l}
         \frac{L_1}{2}\norm{e_p^{i+1}}^2+\frac{L_1}{2}\norm{e_p^{i+1}-e_p^i}^2+\langle b(p_h^{n,i})- b(p_h^n),e_p^{i}\rangle+ \tau   \nu_f \langle K^{-1}  \vec{e}^{i+1}_{\qq} , \vec{e}^{i+1}_{\qq}\rangle \\[2ex]
       \hspace*{1cm} =\frac{L_1}{2}\norm{e_p^{i}}^2 +\langle b(p_h^{n,i})- b(p_h^n),e_p^{i}-e_p^{i+1}\rangle-\alpha \langle \nabla \cdot \vec{e}^{i}_{\uu} ,e_p^{i+1}\rangle \\[2ex]
          \hspace*{1cm}  \leq     \frac{L_1}{2}\norm{e_p^{i}}^2+ \frac{\delta_1 \norm{b(p^{n,i}) -b(p^n)}^2}{2} + \frac{1}{2\delta_1}\norm{e_p^i -e_p^{i+1}}^2-\alpha\langle \nabla \cdot \vec{e}^{i}_{\uu} ,e_p^{i+1}\rangle,\nonumber
       \end{array}
\end{equation}
 for any $\delta_1>0$. Using (A3), we obtain from the above equation
\begin{equation}
\begin{array}{l}
         \frac{L_1}{2}\norm{e_p^{i+1}}^2+\left(\frac{L_1}{2}-\frac{1}{2\delta_1}\right) \norm{e_p^{i+1}-e_p^i}^2+\langle b(p_h^{n,i})- b(p_h^n),e_p^{i}\rangle\\
      \hspace*{0.5cm}  +  \dfrac{\tau \nu_f} {k_M^{-1} }\norm{\vec{e}^{i+1}_{\qq}}^2  \leq \frac{L_1}{2}\norm{e_p^{i}}^2 + \frac{\delta_1}{2} \norm{b(p_h^{n,i})
         -b(p_h^n)}^2  -\alpha\langle \nabla \cdot \vec{e}^{i}_{\uu} ,e_p^{i+1}\rangle.
       \end{array}
        \label{MassConservacions_Proof_3}
\end{equation}
Furthermore, using now (A1), i.e. the monotonicity and the Lipschitz continuity of  $b(\cdot)$, we get from \eqref{MassConservacions_Proof_3}
%implies $\langle b(p^{n,i})- b(p^n),e_p^{i}\rangle \geq b_m \norm{e_p^{i}}^2$ where as, the Lipschitz properties gives $\langle b(p^{n,i})- b(p^n),e_p^{i}\rangle \geq \frac{1}{L_b}\norm{ b(p^{n,i})- b(p^n)}^2$.  Using these in \eqref{MassConservacions_Proof_3} we obtain
\begin{equation}
\begin{array}{l}
         \frac{L_1}{2}\norm{e_p^{i+1}}^2 +  \left(\frac{L_1}{2}-\frac{1}{2\delta_1}\right) \norm{e_p^{i+1}-e_p^i}^2+\frac{b_m}{2} \norm{e_p^{i}}^2 +  \dfrac{\tau \nu_f} {k_M }\norm{\vec{e}^{i+1}_{\qq}}^2\\ 
        \hspace*{1cm} +\left(\frac{1}{2L_b} -  \frac{\delta_1}{2}\right) \norm{b(p_h^{n,i}) -b(p_h^n)}^2   \leq \frac{L_1}{2}\norm{e_p^{i}}^2  -\alpha\langle \nabla \cdot \vec{e}^{i}_{\uu} ,e_p^{i+1}\rangle.
       \end{array}
        \label{MassConservacions_Proof4}
\end{equation}

\noindent{\it Step 2: Displacement Equation}\\
%Next we proceed to the displacement equation.
Testing \eqref{Structural_Proof} with $\vec{z}_h=\vec{e}^{i+1}_{\uu} \in {\mathbf V}_h $ and using \eqref{algebraic_ident_2} we obtain
\begin{equation} \label{eq_error_displ}
\begin{array}{l}
        2\mu \langle   \varepsilon(\vec{e}^{i+1}_{\uu}):\varepsilon(\vec{e}^{i+1}_{\uu})\rangle +          \langle   h(\nabla \cdot \uu_h^{n,i}) - h (\nabla \cdot \uu_h^n)  ,\nabla \cdot\vec{e}^{i+1}_{\uu}\rangle +\frac{L_2}{2} \norm{ \nabla \cdot \vec{e}^{i+1}_{\uu}}^2\\ [2ex]
     \hspace*{1cm}+ \frac{L_2}{2}\norm{\nabla \cdot (\vec{e}^{i+1}_{\uu}-\vec{e}^{i}_{\uu})}^2   =\frac{L_2}{2} \norm{ \nabla \cdot \vec{e}^{i}_{\uu}}^2+\alpha \langle e_p^{i+1}, \nabla \cdot\vec{e}^{i+1}_{\uu}\rangle. 
 \end{array}
\end{equation}
Proceeding as in the {\it Step 1} above, by some algebraic manipulations, using Cauchy-Schwarz and Young inequalities and assumption (A2) we obtain from \eqref{eq_error_displ}
\begin{equation}
\begin{aligned}
       2\mu \langle   \varepsilon(\vec{e}^{i+1}_{\uu}):\varepsilon(\vec{e}^{i+1}_{\uu})\rangle +\left(\frac{1}{2L_h}-\frac{\delta_2}{2}\right)\norm{h (\nabla \cdot \uu_h^{n,i}) - h (\nabla \cdot \uu_h^{n})}^2
        &\\
        +\frac{L_2}{2} \norm{ \nabla \cdot \vec{e}^{i+1}_{\uu}}^2  
       + \left(\frac{L_2}{2}-\frac{1}{2\delta_2}\right)\norm{\nabla \cdot (\vec{e}^{i+1}_{\uu}-\vec{e}^{i}_{\uu})}^2 &\\
      \leq 
      \frac{L_2-h_m}{2} \norm{ \nabla \cdot \vec{e}^{i}_{\uu}}^2   
+\alpha \langle e_p^{i+1}, \nabla \cdot\vec{e}^{i+1}_{\uu}\rangle.&  
\label{Structural_Proof8}
 \end{aligned}
\end{equation}
for any $\delta_2>0$.\\

\noindent{\it Step 3: Combining flow and displacement}\\
Adding \eqref{MassConservacions_Proof4} and  \eqref{Structural_Proof8}  we obtain
\vspace*{-1ex}
\begin{equation}
\begin{array}{l}
         \frac{L_1}{2} \norm{e_p^{i+1}}^2 + \left(\frac{L_1}{2}-\frac{1}{2\delta_1}\right) \norm{e_p^{i+1}-e_p^i}^2 + \dfrac{\tau   \nu_f }{ k_M} \norm{\vec{e}^{i+1}_{\qq}}^2   \\ 
     \hspace*{0.5cm}    +\left(\frac{1}{2L_b}-\frac{\delta_1}{2}\right) \norm{b(p_h^{n,i}) -b(p_h^n)}^2    + 2\mu \langle   \varepsilon(\vec{e}^{i+1}_{\uu}):\varepsilon(\vec{e}^{i+1}_{\uu})\rangle +\frac{L_2}{2} \norm{ \nabla \cdot \vec{e}^{i+1}_{\uu}}^2 \\ 
      \hspace*{0.5cm}     +\left(\frac{1}{2L_h}-\frac{\delta_2}{2}\right)\norm{h (\nabla \cdot \uu_h^{n,i}) - h (\nabla \cdot \uu_h^{n})}^2  + \left(\frac{L_2}{2}-\frac{1}{2\delta_2}\right)\norm{\nabla \cdot (\vec{e}^{i+1}_{\uu}-\vec{e}^{i}_{\uu})}^2 \\
  \hspace*{1cm}      \leq \frac{L_1-b_m}{2}\norm{e_p^{i}}^2+ \frac{L_2-h_m}{2} \norm{ \nabla \cdot \vec{e}^{i}_{\uu}}^2 
+\underbrace{\alpha \langle \nabla \cdot(\vec{e}^{i+1}_{\uu}-\vec{e}^{i}_{\uu}),e_p^{i+1} \rangle}_{T}   .
\label{MassStructural_Proof_45}
 \end{array}
 \end{equation}
  We denoted the last term on the right hand side by $T$, which can be estimated separately by using  Cauchy Schwarz  and Young inequalities. We get
\begin{equation}
\begin{aligned}
T  
\leq \frac{\alpha^2}{2\delta_3}\norm{\nabla \cdot(\vec{e}^{i+1}_{\uu}-\vec{e}^{i}_{\uu})}^2+\frac{\delta_3}{2}\norm{e_p^{i+1}}^2\nonumber
 \end{aligned}
 \end{equation}
for any $\delta_3>0$. Then,  \eqref{MassStructural_Proof_45} takes the form
\begin{equation}
\begin{array}{l}
         (\frac{L_1}{2} - \frac{\delta_3}{2})\norm{e_p^{i+1}}^2 + \left(\frac{L_1}{2}-\frac{1}{2\delta_1}\right) \norm{e_p^{i+1}-e_p^i}^2 + \dfrac{\tau   \nu_f }{ k_M} \norm{\vec{e}^{i+1}_{\qq}}^2   \\ 
     \hspace*{0.5cm}    +\left(\frac{1}{2L_b}-\frac{\delta_1}{2}\right) \norm{b(p_h^{n,i}) -b(p_h^n)}^2    + 2\mu \langle   \varepsilon(\vec{e}^{i+1}_{\uu}):\varepsilon(\vec{e}^{i+1}_{\uu})\rangle +\frac{L_2}{2} \norm{ \nabla \cdot \vec{e}^{i+1}_{\uu}}^2 \\ 
      \hspace*{0.5cm}     +\left(\frac{1}{2L_h}-\frac{\delta_2}{2}\right)\norm{h (\nabla \cdot \uu_h^{n,i}) - h (\nabla \cdot \uu_h^{n})}^2  + \left(\frac{L_2}{2}-\frac{1}{2\delta_2} -\frac{\alpha^2}{2\delta_3}\right)\norm{\nabla \cdot (\vec{e}^{i+1}_{\uu}-\vec{e}^{i}_{\uu})}^2 \\
  \hspace*{1cm}      \leq \frac{L_1-b_m}{2}\norm{e_p^{i}}^2+ \frac{L_2-h_m}{2} \norm{ \nabla \cdot \vec{e}^{i}_{\uu}}^2.
\label{MassStructural_Proof_5}
 \end{array}
 \end{equation}

Due to Lemma \ref{thomas}, there exists a $\vec{v}_h \in {\mathbf V}_h$ such that  $\divv \vec{v}_h =e_p^{i+1}$ and $\norm{\vec{v}_h} \leq C_{\Omega,d}\norm{e_p^{i+1}}$. Testing \eqref{Darcy_Proof} with this  $\vec{v}_h$, and using Cauchy-Schwarz's inequality we obtain
\begin{equation}
 \begin{aligned}
\norm{e_p^{i+1}} \leq    
    C_{\Omega,d} \frac{\nu_f}{k_m} \norm{  \vec{e}^{i+1}_{\qq}}. \label{thomasproof4}
  \end{aligned}
\end{equation}
Using now \eqref{thomasproof4} in \eqref{MassStructural_Proof_5} further gives
\begin{equation}
\begin{array}{l}
         \left( \frac{L_1}{2}+  \frac{\tau k_m^2}{ 2  \nu_f C_{\Omega,d }^2 k_M} -\frac{\delta_3}{2}\right) \norm{e_p^{i+1}}^2 + 
         \left(\frac{L_1}{2}-\frac{1}{2\delta_1}\right) \norm{e_p^{i+1}-e_p^i}^2
          + \frac{\tau   \nu_f}{2k_M} \norm{\vec{e}^{i+1}_{\qq}}^2    \\ 
     \hspace*{0.5cm}    +\left(\frac{1}{2L_b}-\frac{\delta_1}{2}\right) \norm{b(p^{n,i}) -b(p^n)}^2    + 2\mu \langle   \varepsilon(\vec{e}^{i+1}_{\uu}):\varepsilon(\vec{e}^{i+1}_{\uu})\rangle +\frac{L_2}{2} \norm{ \nabla \cdot \vec{e}^{i+1}_{\uu}}^2 \\ 
      \hspace*{0.5cm}     +\left(\frac{1}{2L_h}-\frac{\delta_2}{2}\right)\norm{h (\nabla \cdot \uu^{n,i}) - h (\nabla \cdot \uu^{n})}^2  + \left(\frac{L_2}{2}-\frac{1}{2\delta_2} -\frac{\alpha^2}{2\delta_3}\right)\norm{\nabla \cdot (\vec{e}^{i+1}_{\uu}-\vec{e}^{i}_{\uu})}^2 \\
  \hspace*{1cm}      \leq \frac{L_1-b_m}{2}\norm{e_p^{i}}^2+ \frac{L_2-h_m}{2} \norm{ \nabla \cdot \vec{e}^{i}_{\uu}}^2.
\label{MassStructural_Proof_6}
 \end{array}
 \end{equation}
Finally, choosing $\delta_1 = \frac{1}{L_b}$, $\delta_2 = \frac{1}{L_h}$, $\delta_3 = b_m$ and assuming that there holds  $L_1 \geq L_b$ and $L_2 \geq L_h + \frac{\alpha^2}{b_m}$ we obtain from \eqref{MassStructural_Proof_6}
\begin{displaymath}
\begin{array}{l}
  \ds      \left( \frac{L_1}{2}-\frac{b_m}{2}+  \frac{\tau k_m^2}{ 2  \nu_f C_{\Omega,d }^2 k_M}\right)  \norm{e_p^{i+1}}^2 + 
        \frac{\tau   \nu_f }{2k_M} \norm{\vec{e}^{i+1}_{\qq}}^2 +\frac{L_2}{2} \norm{ \nabla \cdot \vec{e}^{i+1}_{\uu}}^2 \\[2ex]
 \ds  \hspace*{2cm}     + 2\mu \langle   \varepsilon(\vec{e}^{i+1}_{\uu}):\varepsilon(\vec{e}^{i+1}_{\uu})\rangle \leq 
      \frac{L_1-b_m}{2}\norm{e_p^{i}}^2+
      \frac{L_2-h_m}{2} \norm{ \nabla \cdot \vec{e}^{i}_{\uu}}^2. 
 \end{array}
 \end{displaymath}
 
 The above result gives immediately \eqref{S_MainResult}.
  \end{proof}
\begin{remark}
In the linear case, i.e. $b^\prime =\dfrac{1}{M} $ and $h^\prime = \lambda$, the undrained split scheme \cite{RIS_0,Mikelic2} is  obtained by taking $L_1 = \dfrac{1}{M}$ and $ L_2 = \lambda + M \alpha^2$. The convergence result is the same then as the one obtained in  \cite{Mikelic2}, but now in energy norms. Nevertheless, the optimal convergence would be obtained for $ L_2 = \lambda + \dfrac{M \alpha^2}{2}$, which can be shown e.g. by using the techniques in \cite{Jakub_2016} (for the linear case).
\end{remark}

\begin{remark}
The convergence rate is actually better, due to $\langle \varepsilon(\vec{e}^{i+1}_{\uu}):\varepsilon(\vec{e}^{i+1}_{\uu})\rangle \geq \frac{1}{d} \norm{ \nabla \cdot \vec{e}^{i+1}_{\uu}}^2$ which furnishes the term $(L_2+2\mu/d ) \norm{ \nabla \cdot \vec{e}^{i+1}_{\uu}}^2$ on the left hand side of the inequality \eqref{S_MainResult}.
\end{remark}

%\begin{remark} $\delta_3$ can be chosen smaller to improve the convergence rate for $e^{i+1}_p$ but it will increase the lower bound of $L_2\geq L_h + \frac{\alpha^2}{\delta_3}$.  \end{remark}
% 
% \begin{remark} %remark on the optimality of the constant for the linear case
%This optimal constant for $L=\frac{\alpha^2}{2\lambda}$ is equivalent to the constant obtained by Mikelic and Wheeler  \cite{Mikelic2}. Also this proof show a better estimation of convergence rate $R$ than obtained by Mikelić and Wheeler in \cite{Mikelic2}  because we gain and extra term $\left( \frac{\tau k_m}{C_{\Omega,d}k_M}\right)$ from the lemma \ref{thomas}. The proof works also for $M\rightarrow \infty$ and $\mu \rightarrow 0 $.
%\end{remark}

\noindent{\bf Convergence of the monolithic L-scheme and existence and uniqueness of the non-linear variational formulation Problem $P_h^n$ \eqref{VF_displacement}--\eqref{VF_pressure}.}

We prove now also the convergence of the monolithic L-scheme \eqref{LL_displacement} - \eqref{LL_pressure}. The idea is to show that the scheme is a contraction and apply the Banach fixed point theorem. In particular, we obtain by this also the existence and uniqueness of the original, non-linear problem \eqref{VF_displacement}-\eqref{VF_pressure}.

We define now $e_p^i =p^{n,i}-p^{n, i-1}$, $ \vec{e}_{\qq}^i =\qq^{n,i}-\qq^{n, i-1}$ and $ \vec{e}_{\uu}^i =\uu^{n,i}-\uu^{n,i-1}$ the differences between the solutions at iteration $i$ and $i-1$ of problem  \eqref{LL_displacement} - \eqref{LL_pressure}, respectively. Please remark the different definition compared to the proof of the convergence of the splitting algorithm. In that case the existence of a solution of the non-linear problem \eqref{VF_displacement}-\eqref{VF_pressure}  was assumed, in this case it will be proved.

\begin{theorem} \label{theorem_L_scheme} Assuming that (A1)-(A3) hold true and that $L_1 \ge  \dfrac{L_b}{2}$ and $L_2 \ge L_h$, the fixed point scheme \eqref{LL_displacement} - \eqref{LL_pressure} is a contraction satisfying
\begin{equation} \label{main_result_L}
\begin{array}{l}
 (L_1  +   \dfrac{\tau k_m^2}{  \nu_f k_M C_{\Omega,d}^2})   \norm{e_p^{i+1}}^2    + \dfrac{\tau   \nu_f }{ k_M} \norm{\vec{e}^{i+1}_{\qq}}^2 +  4 \mu   \langle \varepsilon (\vec{e}_{\uu}^{i+1}):  \langle \varepsilon (\vec{e}_{\uu}^{i+1}) \rangle \\
  \hspace*{0.5cm} + L_2 \norm{\divv\vec{e}_{\uu}^{i+1}}^2 \leq  L_1 \norm{e_p^{i}}^2 +  (L_2 - h_m) \norm{\divv\vec{e}_{\uu}^{i}}^2.
  \end{array}
\end{equation}

The limit is then the unique solution of \eqref{VF_displacement}-\eqref{VF_pressure}.
\end{theorem}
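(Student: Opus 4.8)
The plan is to mirror the structure of the preceding (splitting) proof, the essential simplification being that in the monolithic coupling the two cross terms cancel exactly, which removes the residual term $T$ and hence the dependence of $L_2$ on $\alpha^2/b_m$. First I would subtract the scheme \eqref{LL_displacement}--\eqref{LL_pressure} written at iteration $i$ from the same scheme at iteration $i+1$. With the differences $e_p^{i+1}, \vec{e}_{\qq}^{i+1}, \vec{e}_{\uu}^{i+1}$ defined as in the statement, this yields three linear error identities whose only nonlinear contributions are $b(p_h^{n,i})-b(p_h^{n,i-1})$ and $h(\divv{\uu_h^{n,i}})-h(\divv{\uu_h^{n,i-1}})$. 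I would then test the Darcy error equation with $\tau\vec{e}_{\qq}^{i+1}$, the mass balance error equation with $e_p^{i+1}$, and the momentum error equation with $\vec{e}_{\uu}^{i+1}$, and add the three. The key structural point is that $\tau\langle\divv{\vec{e}_{\qq}^{i+1}},e_p^{i+1}\rangle$ cancels against $\tau\langle e_p^{i+1},\divv{\vec{e}_{\qq}^{i+1}}\rangle$ and, crucially, $\alpha\langle\divv{\vec{e}_{\uu}^{i+1}},e_p^{i+1}\rangle$ cancels against $\alpha\langle e_p^{i+1},\divv{\vec{e}_{\uu}^{i+1}}\rangle$, precisely because the mass balance now carries $\divv{\uu_h^{n,i+1}}$ rather than $\divv{\uu_h^{n,i}}$; no coupling term survives.

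Next I would apply the identity \eqref{algebraic_ident_2} to the two regularization terms $L_1\langle e_p^{i+1}-e_p^i,e_p^{i+1}\rangle$ and $L_2\langle\divv{(\vec{e}_{\uu}^{i+1}-\vec{e}_{\uu}^i)},\divv{\vec{e}_{\uu}^{i+1}}\rangle$, producing $\frac{L_1}{2}\norm{e_p^{i+1}}^2$ and $\frac{L_2}{2}\norm{\divv{\vec{e}_{\uu}^{i+1}}}^2$ on the left and $\frac{L_1}{2}\norm{e_p^i}^2$, $\frac{L_2}{2}\norm{\divv{\vec{e}_{\uu}^i}}^2$ on the right. For the nonlinear terms I would split $e_p^{i+1}=(e_p^{i+1}-e_p^i)+e_p^i$ and likewise for $\divv{\vec{e}_{\uu}^{i+1}}$. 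On the $b$ part I would use only the Lipschitz--monotonicity bound $\langle b(p^{n,i})-b(p^{n,i-1}),e_p^i\rangle\ge \frac{1}{L_b}\norm{b(p^{n,i})-b(p^{n,i-1})}^2$ together with Young's inequality with $\delta_1=\frac{2}{L_b}$; this annihilates the $\norm{b(\cdot)-b(\cdot)}^2$ contribution and leaves $-\frac{L_b}{4}\norm{e_p^{i+1}-e_p^i}^2$, absorbed by $\frac{L_1}{2}\norm{e_p^{i+1}-e_p^i}^2$ exactly when $L_1\ge\frac{L_b}{2}$. On the $h$ part I would instead keep the sharper convex combination $\langle h(\cdot)-h(\cdot),\divv{\vec{e}_{\uu}^i}\rangle \ge \frac{h_m}{2}\norm{\divv{\vec{e}_{\uu}^i}}^2+\frac{1}{2L_h}\norm{h(\cdot)-h(\cdot)}^2$ with $\delta_2=\frac{1}{L_h}$: the term $\frac{h_m}{2}\norm{\divv{\vec{e}_{\uu}^i}}^2$ passes to the right, turning $\frac{L_2}{2}$ into $\frac{L_2-h_m}{2}$, while the leftover $-\frac{L_h}{2}\norm{\divv{(\vec{e}_{\uu}^{i+1}-\vec{e}_{\uu}^i)}}^2$ is absorbed provided $L_2\ge L_h$.

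It then remains to convert part of the flux dissipation into pressure control. Using (A3) I would bound $\nu_f\langle K^{-1}\vec{e}_{\qq}^{i+1},\vec{e}_{\qq}^{i+1}\rangle\ge \frac{\nu_f}{k_M}\norm{\vec{e}_{\qq}^{i+1}}^2$, keep half of the resulting flux term, and on the other half invoke Lemma \ref{thomas}: choosing $\vec{v}_h$ with $\divv{\vec{v}_h}=e_p^{i+1}$ in the Darcy error equation gives $\norm{e_p^{i+1}}\le C_{\Omega,d}\frac{\nu_f}{k_m}\norm{\vec{e}_{\qq}^{i+1}}$, which furnishes the extra $\frac{\tau k_m^2}{\nu_f k_M C_{\Omega,d}^2}$ factor on $\norm{e_p^{i+1}}^2$. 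Multiplying the resulting inequality by two then reproduces exactly \eqref{main_result_L}.

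Finally, to obtain existence and uniqueness I would read \eqref{main_result_L} as a contraction and invoke Banach's fixed point theorem. Each iteration is a linear problem on $\mathbf{Z}_h\times\mathbf{V}_h\times W_h$, and the same energy identity applied to its homogeneous counterpart forces the trivial solution, so in finite dimensions every iterate exists and is unique and the fixed-point map is well defined. Discarding the nonnegative flux and symmetric-gradient terms on the left of \eqref{main_result_L} and writing $c=\frac{\tau k_m^2}{\nu_f k_M C_{\Omega,d}^2}$, the map contracts the quantity $(L_1+c)\norm{e_p^i}^2+L_2\norm{\divv{\vec{e}_{\uu}^i}}^2$ by the factor $\theta=\max\{\frac{L_1}{L_1+c},\frac{L_2-h_m}{L_2}\}<1$, using $c>0$ and $h_m>0$ from (A1)--(A3); hence the iterates are Cauchy in pressure and in $\divv{\uu}$, and the Darcy estimate together with Korn's inequality upgrades this to convergence of $\qq_h^{n,i}$ and $\uu_h^{n,i}$. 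By continuity of $b$ and $h$ the limit solves \eqref{VF_displacement}--\eqref{VF_pressure}, which in turn delivers its uniqueness. I expect the delicate points to be twofold: pinning the bounds so as to land on exactly $L_1\ge\frac{L_b}{2}$ and $L_2\ge L_h$ (in particular using the plain Lipschitz bound, rather than the convex combination, for $b$), and upgrading the a priori contraction to a genuine well-posedness statement for the nonlinear problem, which requires the separate solvability of each linear iterate and the limit passage.
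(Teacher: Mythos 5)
Your proposal is correct and follows essentially the same route as the paper's proof: subtract consecutive iterates, test with $\tau\vec{e}_{\qq}^{i+1}$, $e_p^{i+1}$, $\vec{e}_{\uu}^{i+1}$, exploit the exact cancellation of the $\alpha$-coupling terms, treat the $b$-term with the plain Lipschitz bound (giving $L_1\ge L_b/2$) and the $h$-term with the monotonicity/Lipschitz convex combination (giving $L_2\ge L_h$), recover pressure control from the flux via Lemma \ref{thomas}, multiply by two, and conclude by Banach's fixed point theorem. Your elaboration of the final contraction factor and the solvability of each linear iterate is more explicit than the paper's (which compresses these steps into a reference to the splitting proof), but it is the same argument.
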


\begin{proof} We begin by writing the equations for $e_p^i, \vec{e}_{\qq}^i, \vec{e}_{\uu}^i$. By subtracting equations \eqref{LL_displacement} - \eqref{LL_pressure} at $i$ from the ones at $i+1$ we get for all $\vec{z}_h\in {\mathbf Z}_h $, $\vec{v}_h \in {\mathbf V}_h$ and $w_h \in W_h$ 
\begin{equation} \label{proof_L_1}
\begin{array}{l}
2\mu   \langle  \varepsilon (\vec{e}_{\uu}^{i+1}): \varepsilon (\vec{z}_h)\rangle +
 L_2 \langle \divv\vec{e}_{\uu}^{i+1}-\divv{\vec{e}_{\uu}^{i}},\divv{ \vec{z}_h} \rangle \\
\hspace*{2cm} +\langle  h(\divv{ \uu_h^{n,i}})  - h(\divv{ \uu_h^{n,i-1}}),\divv{\vec{z}_h}\rangle = \alpha\langle e_p^{i+1} , \divv{\vec{z}_h}\rangle,
  \end{array}
  \end{equation}
  and
 \begin{eqnarray}  
 %\begin{array}{l}
  \nu_f\langle K^{-1} \vec{e}_{\qq}^{i+1} , \vec{v}_h \rangle - \langle   e_p^{n,i+1} ,\divv{\vec{v}_h} \rangle& = & 0, \label{proof_L_2} \\
 \langle b(p_h^{n,i})   -  b(p_h^{n,i-1}), w_h\rangle  +  L_1  \langle e_p^{i+1}-e_p^{i},w_h\rangle  &+ &\alpha \langle \divv{\vec{e}_{\uu}^{i+1}}, w_h\rangle \nonumber  \\
    +  \tau \langle \nabla \cdot \vec{e}_{\qq}^{i+1} ,w_h\rangle &= & 0.    \label{proof_L_3} 
%\end{array}	
\end{eqnarray}	
Testing now \eqref{proof_L_2}   with  $\vec{v}_h = \tau \vec{e}_{\qq}^{i+1} \in  {\mathbf V}_h$  and \eqref{proof_L_3} with $w_h =  e_p^{i+1} \in W_h$,  adding the results and using the identity \eqref{algebraic_ident_2}  together with some algebraic manipulations we obtain
%intermediary step
%\begin{equation} 
%\begin{array}{l}
 %\langle b(p_h^{n,i})   -  b(p_h^{n,i-1}), e_p^{i+1} \rangle  +  L_1  \langle e_p^{i+1}-e_p^{i}, e_p^{i+1} \rangle +  \dfrac{\tau   \nu_f }{ k_M} \norm{\vec{e}^{i+1}_{\qq}}^2 \\
  %\hspace*{1cm} = - \alpha  \langle \divv{\vec{e}_{\uu}^{i+1}}, e_p^{i+1}\rangle.
 % \end{array}
 % \end{equation}
\begin{equation} \label{proof_L_4}
\begin{array}{l}
 \langle b(p_h^{n,i})   -  b(p_h^{n,i-1}), e_p^{i} \rangle  +  \dfrac{L_1}{2}  \norm{e_p^{i+1}}^2  +  \dfrac{L_1}{2}  \norm{e_p^{i+1}-e_p^{i}}^2 +  \tau   \nu_f   \langle K^{-1} \vec{e}^{i+1}_{\qq}, \vec{e}^{i+1}_{\qq}\rangle  \\
  \hspace*{1cm} =   \dfrac{L_1}{2}  \norm{e_p^{i}}^2 + \langle b(p_h^{n,i})   -  b(p_h^{n,i-1}), e_p^{i}  - e_p^{i+1} \rangle  - \alpha  \langle \divv{\vec{e}_{\uu}^{i+1}}, e_p^{i+1}\rangle.
  \end{array}
  \end{equation}
We proceed by testing \eqref{proof_L_1} with $ \vec{z}_h  = \vec{e}_{\uu}^{i+1} \in {\mathbf Z}_h $,  and using again \eqref{algebraic_ident_2}  to get
\begin{equation} \label{proof_L_5}
\begin{array}{l}
2\mu   \langle \varepsilon (\vec{e}_{\uu}^{i+1}):   \varepsilon (\vec{e}_{\uu}^{i+1}) \rangle + \dfrac{L_2}{2} \norm{\divv\vec{e}_{\uu}^{i+1}}^2  + \dfrac{L_2}{2} \norm{\divv\vec{e}_{\uu}^{i+1} - \divv{\vec{e}_{\uu}^{i}}}^2    \\
\hspace*{0.5cm} +\langle  h(\divv{ \uu_h^{n,i}})  - h(\divv{ \uu_h^{n,i-1}}), \divv \vec{e}_{\uu}^{i} \rangle = \dfrac{L_2}{2} \norm{\divv\vec{e}_{\uu}^{i}}^2 \\
 \hspace*{1.5cm} +  \langle  h(\divv{ \uu_h^{n,i}})  - h(\divv{ \uu_h^{n,i-1}}),  \divv(\vec{e}_{\uu}^{i} -  \vec{e}_{\uu}^{i+1}) \rangle+ \alpha\langle e_p^{i+1} ,  \divv\vec{e}_{\uu}^{i+1}\rangle.
  \end{array}
  \end{equation}
We add now \eqref{proof_L_4} and \eqref{proof_L_5}, use (A1) - (A3), the Cauchy-Schwarz and Young inequalities and Lemma \ref{thomas} in a similar manner as in the proof of the convergence for the splitting algorithm to finally obtain (after a multiplication by 2)
\begin{equation} \label{proof_L_6}
\begin{array}{l}
 (L_1  +   \dfrac{\tau k_m^2}{  \nu_f k_M C_{\Omega,d}^2})   \norm{e_p^{i+1}}^2    + \dfrac{\tau   \nu_f }{ k_M} \norm{\vec{e}^{i+1}_{\qq}}^2 + 4 \mu   \langle \varepsilon (\vec{e}_{\uu}^{i+1}):  \langle \varepsilon (\vec{e}_{\uu}^{i+1}) \rangle \\
  \hspace*{0.5cm} + L_2 \norm{\divv\vec{e}_{\uu}^{i+1}}^2 + (L_1 - \dfrac{L_b}{2})  \norm{e_p^{i+1}-e_p^{i}}^2 + (L_2 - L_h)  \norm{\divv\vec{e}_{\uu}^{i+1} - \divv{\vec{e}_{\uu}^{i}}}^2 \\
   \hspace*{2cm} \leq  L_1 \norm{e_p^{i}}^2 +  (L_2 - h_m) \norm{\divv\vec{e}_{\uu}^{i}}^2.
  \end{array}
  \end{equation}
The above result gives immediately \eqref{main_result_L}, implying that the considered fixed point scheme is a contraction. The rest follows by applying Banach fixed point theorem.

\end{proof}

\begin{remark}  Theorem \ref{theorem_L_scheme} holds also for $b(\cdot)$ increasing, i.e. $b_m \ge 0$, not necessarily strictly increasing (see assumption (A1)). It implies that the monolithic L-scheme converges also for an incompressible fluid $b = 0$, being therefore more robust than the splitting L-scheme. We point out that the monolithic L-schemes converges also for $h_m = 0$ if $ \mu > 0$.
\end{remark}

 \section{Numerical results}
\label{sec:numericalresults}

In this section, we present numerical experiments with the purpose of illustrating the performance of the iterative schemes proposed.  We propose two main test problems: an academic problem with a manufactured analytical solution, and a non-linear extension of Mandel's problem. All numerical experiments were implemented using the open-source finite element library Deal II \cite{DealII}.\\
\\
\\
\noindent {\bf Test problem 1: an academic example with a manufactured solution}\\

We solve the non-linear Biot problem in the unit-square  $\Omega=(0,1)^2$ and until final time $T = 1$, with a manufactured right hand side ($S_f$ and $\vec{f}$)  such that the problem admits the following analytical  solution
\begin{align}
p(x,y,t)&=tx(1-x)y(1-y),\nonumber\\
\qq(x,y,t)&=-K\nabla p, \nonumber        \\
u_1(x,y,t)=u_2(x,y,t)&=tx(1-x)y(1-y), \nonumber
\end{align} 
which has homogeneous boundary values for $p$ and $\uu$. We consider $K=\nu_f = M =\alpha=\lambda = \mu =1.0$.  For this case, all initial conditions are $0$. For all cases, we use as convergence criteria for the schemes %non-linear iteration
 $ \norm{e_p^{i}} +\norm{\vec{e}^{i}_{\qq}} +\norm{\vec{e}_{\uu}^{i}} \leq 10^{-8}$.

%We consider first the nonlinearities $b(x)=x^{\gamma},\ $ $h(x)=x^{\beta}$, $\ \gamma$ and $\beta  \in \mathbb{R}\ $. The tuning parameters are set $L_1=L_b$ and $L_2=L_h$. In general, the iterative schemes converge in less than 10  iterations for every  $\gamma>0$ and  $\beta\geq 1$.

In order to study the performance of the considered schemes, with a special focus on the splitting algorithm, we propose three coefficient functions for  $b(\cdot) $ and two for $h(\cdot)$, and define five test cases as given in Table 2. %First case, we consider $h(\divv \uu )= (\divv \uu)^3$ and isothermal compressibility $b(p)=e^p$, second and third case the non-linear functions $b(p)=p^3$ and $b(p)=\sqrt[3]{p}\ $ respectively. We consider  $h(\divv \uu)= \sqrt[3]{(\divv \uu)^5}$, in the fourth and and fifth case.
 The Lipschitz constants $L_b$, $L_h$ depend on the pressure and the divergence of displacements, respectively.  Unfortunately, for realistic problems one does not have the exact values of $L_b$ and $L_h$. Hence, it is necessary to determine how sensitive is the convergence of the proposed numerical schemes with respect to the tuning parameters $L_1$ and $L_2$. For each case, we investigated a range of values for $L_1$ and $L_2$ to assess the sensitivity of the performance of both monolithic and splitting L-schemes with respect to these parameters.

\begin{table}[h]
 \begin{center}
\caption{The coefficient functions $b(\cdot), h(\cdot)$ for test problem 1.} %Maybe "Cases to test using non linear test problem ... "?
 \label{Nonlinearfuntions}
 \begin{tabular}{lc|c}
\hline
Case &$b(p)$& $h(\divv{\uu})$\\[0.2cm]
\hline
1  &$e^p$&$ (\divv{\uu})^{3} $\\[0.2cm]
2  &$p^{3}$& $      (\divv{\uu})^{3} $\\[0.2cm]
3  &$ \sqrt[3]{p} $&$    (\divv{\uu})^{3} $\\[0.2cm]
4  &$ p^3 $&$\sqrt[3]{(\divv{\uu})^{5}} $\\[0.2cm]
5  &$ \sqrt[3]{p}$&$\sqrt[3]{(\divv{\uu})^{5}}$ \\
\hline
\end{tabular}
 \end{center}
\end{table}

Figures \ref{FL1L2_iterationsG} - \ref{FL1L2_iterationsG5} illustrate the numbers of iterations for the  splitting and monolithic L-schemes for different values of $L_1$ and $L_2$. A relative similar behaviour with respect to the tuning parameters $L_1, L_2$, the value of $K$ and of coefficient $\alpha$ for the two proposed schemes is observed. The schemes are sensitive  to the  choice of the coefficient functions $b(\cdot)$ and $h(\cdot)$. We remark that the region for faster convergence of the fourth and fifth cases (Figures \ref{FL1L2_iterationsG3} and \ref{FL1L2_iterationsG5}) is more narrow than of the firsts three cases.  In all cases, the proposed iterative scheme was more sensitive with respect to the parameter $L_2$ as to $L_1$, Figures \ref{FL1L2_iterationsG} - \ref{FL1L2_iterationsG5}. The fastest convergence for the both schemes was obtained when $L_1 \sim L_b$ and $L_2 \sim L_h$. 
%However,  we point out that the schemes were, at least for this test problem, converging also for $L_1=L_2=0$. This suggest the following strategy for obtaining the optimal parameters: start with $L_1=L_2=0$ and increase then gradually until the number of iterations increases again

The convergence of the proposed scheme is clearly depending on the value of the permeability, as one can see in Figures \ref{KFL1L2_iterations4} and \ref{KFL1L2_iterations5}. In accordance with the theoretical results in Section \ref{sec:convergence}, a higher permeability implies a faster convergence. Moreover, we have tested the numerical schemes with different mesh sizes and different time step. The results in Figure \ref{dh_bexp} shows that the schemes are converging faster when the time step decrease and the convergence is not depending of the mesh diameter. This was obtained  by setting $L_1=L_b$ and $L_2=L_h$. Nevertheless, by running the same case 1 but decreasing the linearization parameters  $L_1$ and $L_2$ in two order of magnitude the schemes are converging faster when the time step increases and that the convergence is not depending of the mesh diameter, confirming again the theory (See Figure \ref{dh_cbrtF2}).
Finally, we test the scheme for different values of the Biot coupling constant $\alpha$, see Figures  \ref{AFL1L2_iterations5} and \ref{AFL1L2_iterations6}. It seems that the convergence is relatively independent of the values of $\alpha$.

In Figures \ref{timeL_scheme}-\ref{GMRESITERATION} we further compared the two proposed schemes  with respect to  CPU time and number of both non-linear and GMRES iterations.	A natural advantage of the splitting L-scheme is that it decouples the system of equations in two, one corresponding to flow and one for mechanics. Thus the resulting linear systems are composed of two positive definite problems, rather than the saddle-point structure arising from the monolithic linearization. A comparison of the CPU time for test problem 1, case 1 is shown in Figure \ref{timeL_scheme}. The schemes are performing similar, when no preconditioning is applied. The same is observed for the total number of iterations in Figure \ref{Linearization_procedure}. In Figure \ref{GMRESITERATION}  we report the number of GMRES iterations needed to resolve the linear system associated with the flow problem for the splitting L-scheme, compared to number of iterations needed for the monolithic L-scheme. For the cases considered here, the number of iterations needed to solve the mechanics problem in the splitting L-scheme was neglectable. When no preconditioning is applied, we see that the number of iterations needed increases dramatically with grid refinement, as expected. The two schemes are performing similarly, with the splitting being a bit better.

Next, a block preconditioner based on the proposed splitting L-scheme was applied to the monolithic L-scheme in order to reduce the number of GMRES iterations \cite{Tchelepi_2016_Preconditionet}. As a result of preconditioning, GMRES method needed less than 10 iterations to converge for all mesh sizes tested (see Figure \ref{GMRESITERATION}). A substantial reduction in the CPU time is observed, see Figure \ref{timeL_scheme}. We applied the same block preconditioner also to the splitting L-scheme, separately to flow problem and mechanics. Again, the CPU time is strongly reduced (by two orders of magnitude for the largest mesh), see Figure \ref{timeL_scheme}. The splitting L-scheme is now faster then the monolithic L-scheme, as one can see in Figure \ref{timeL_scheme}. We remark also the much smaller number of non-linear iterations for the splitting L-scheme comparing to the monolithic L-scheme in Figure \ref{Linearization_procedure} and the converse situation regarding GMRES iterations in Figure \ref{GMRESITERATION}.  Although the monolithic L-scheme preconditioned needs just 10 GMRES iterations to converge, it requires more computational cost due to an inner linear solver that the block preconditioner has inside.

\begin{figure}[h!]
\centering
 \begin{subfigure}{0.5\textwidth}
 \centering
\includegraphics[scale=0.25,trim={0 0 0 0},clip]{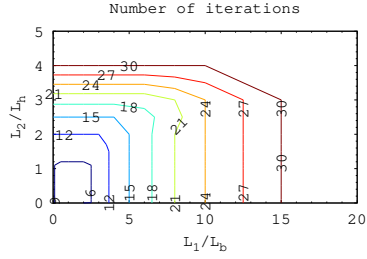} 
        \caption{Splitting}
       \label{FL1L2_SimpleNoLineal1}
    \end{subfigure}%
       ~ 
    \begin{subfigure}{0.5\textwidth}
    \centering
\includegraphics[scale=0.25]{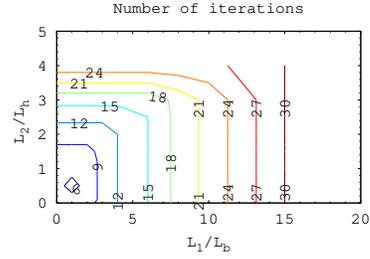}      
   \caption{Monolithic}
      \label{FL1L2_SimpleNoLineal2}
    \end{subfigure}    
    \caption{Performance of the iterative schemes for different values of $L_1$ and $L_2$ for  test problem 1, case 1: $b(p)=e^p;\ $ $h(\divv \uu)=(\divv \uu) ^3$.}
    	    \label{FL1L2_iterationsG}
           % \label{solution}
\end{figure}	 	
 		
\begin{figure}[h!]
\centering
 \begin{subfigure}{0.5\textwidth}
 \centering
\includegraphics[scale=0.25,trim={0 0 0 0},clip]{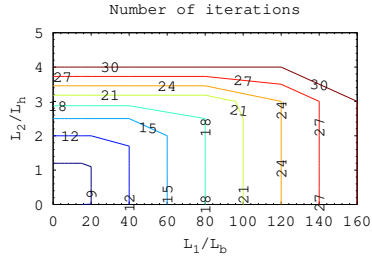} 
        \caption{Splitting}
       \label{FL1L2_SimpleNoLineal12}
    \end{subfigure}%
       ~ 
    \begin{subfigure}{0.5\textwidth}
    \centering
\includegraphics[scale=0.25]{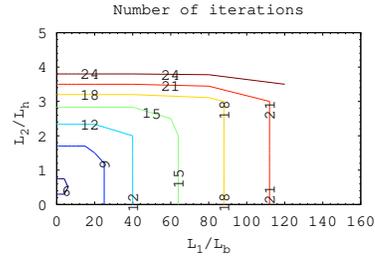}      
   \caption{Monolithic}
      \label{FL1L2_SimpleNoLineal22}
    \end{subfigure}    
    \caption{Performance of splitting algorithm for different values of $L_1$ and $L_2$ for  test problem 1, case 2: $b(p)=p^3;\ $ $h(\divv \uu)=(\divv \uu) ^3$.}
    	    \label{FL1L2_iterationsG2}
           % \label{solution}
\end{figure}

\begin{figure}[h!]
\centering
 \begin{subfigure}{0.5\textwidth}
 \centering
\includegraphics[scale=0.25,trim={0 0 0 0},clip]{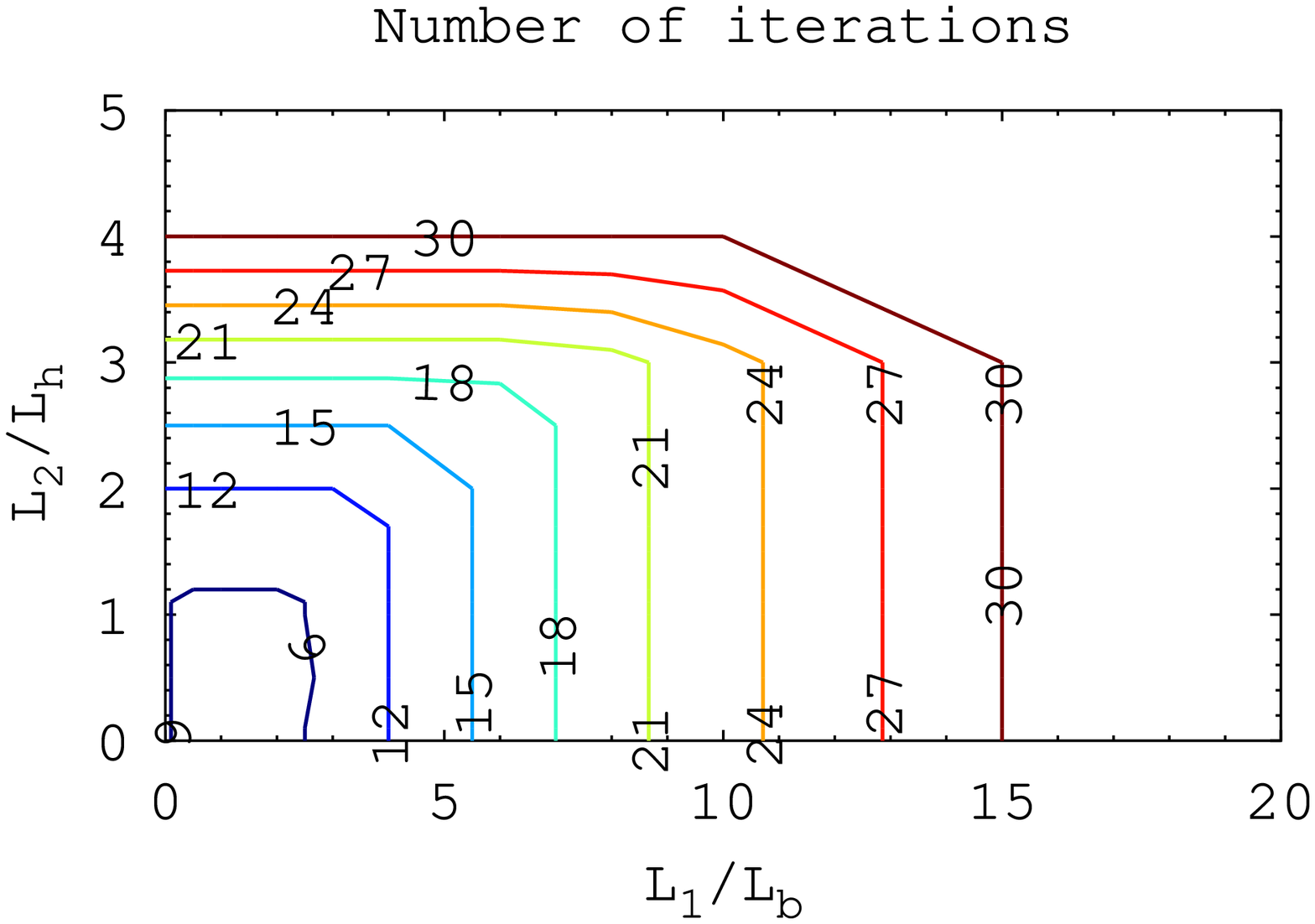} 
        \caption{Splitting}
       \label{FL1L2_SimpleNoLineal14}
    \end{subfigure}%
       ~ 
    \begin{subfigure}{0.5\textwidth}
    \centering
\includegraphics[scale=0.25]{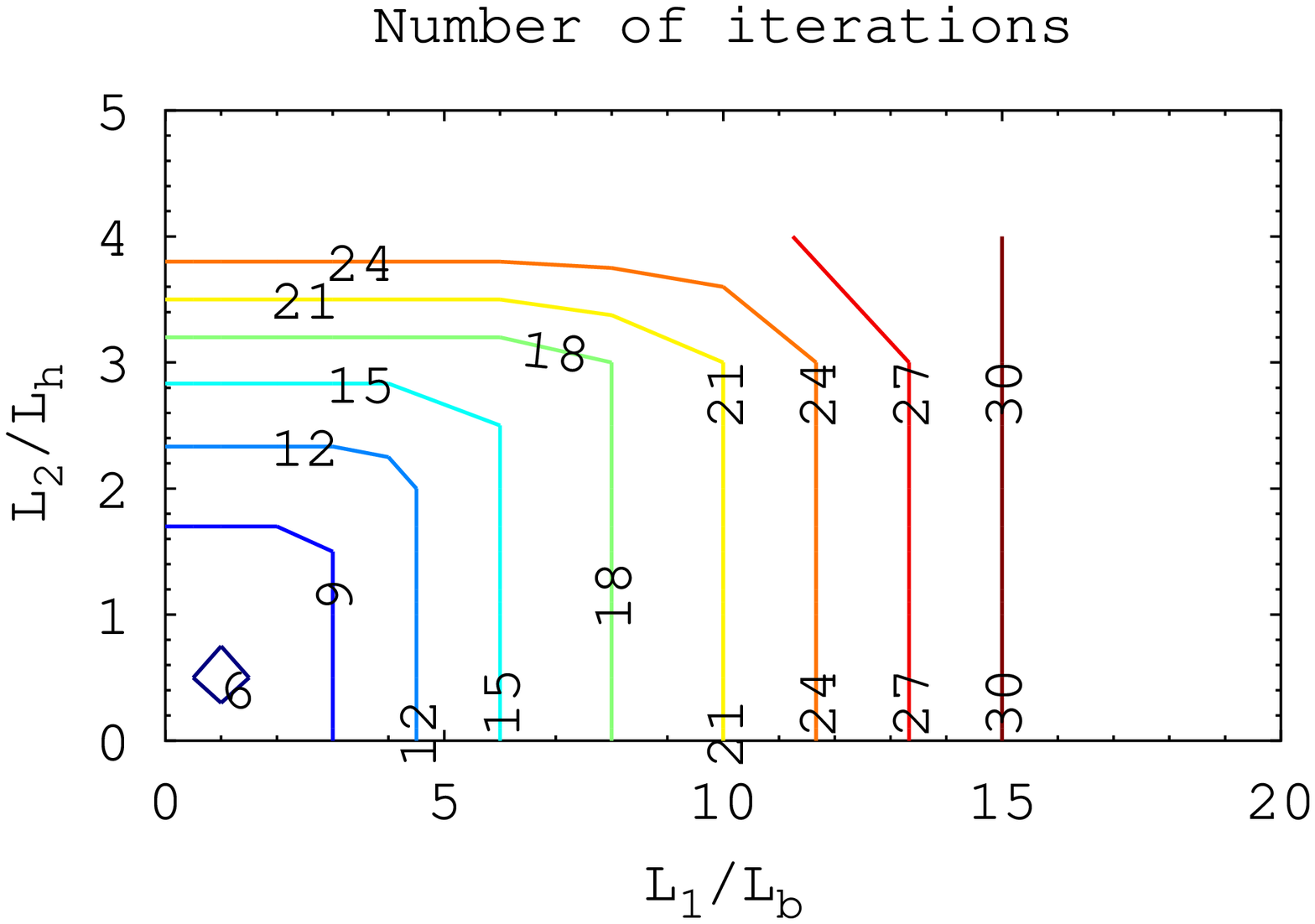}      
   \caption{Monolithic}
      \label{FL1L2_SimpleNoLineal24}
    \end{subfigure}    
    \caption{Performance of the iterative schemes for different values of $L_1$ and $L_2$ for  test problem 1, case 3: $b(p)=\sqrt[3]{p};\ $ $h(\divv \uu)=(\divv \uu) ^3$.}
    	    \label{FL1L2_iterationsG4}
           % \label{solution}
\end{figure}

\begin{figure}[h!]
\centering
 \begin{subfigure}{0.5\textwidth}
 \centering
\includegraphics[scale=0.25,trim={0 0 0 0},clip]{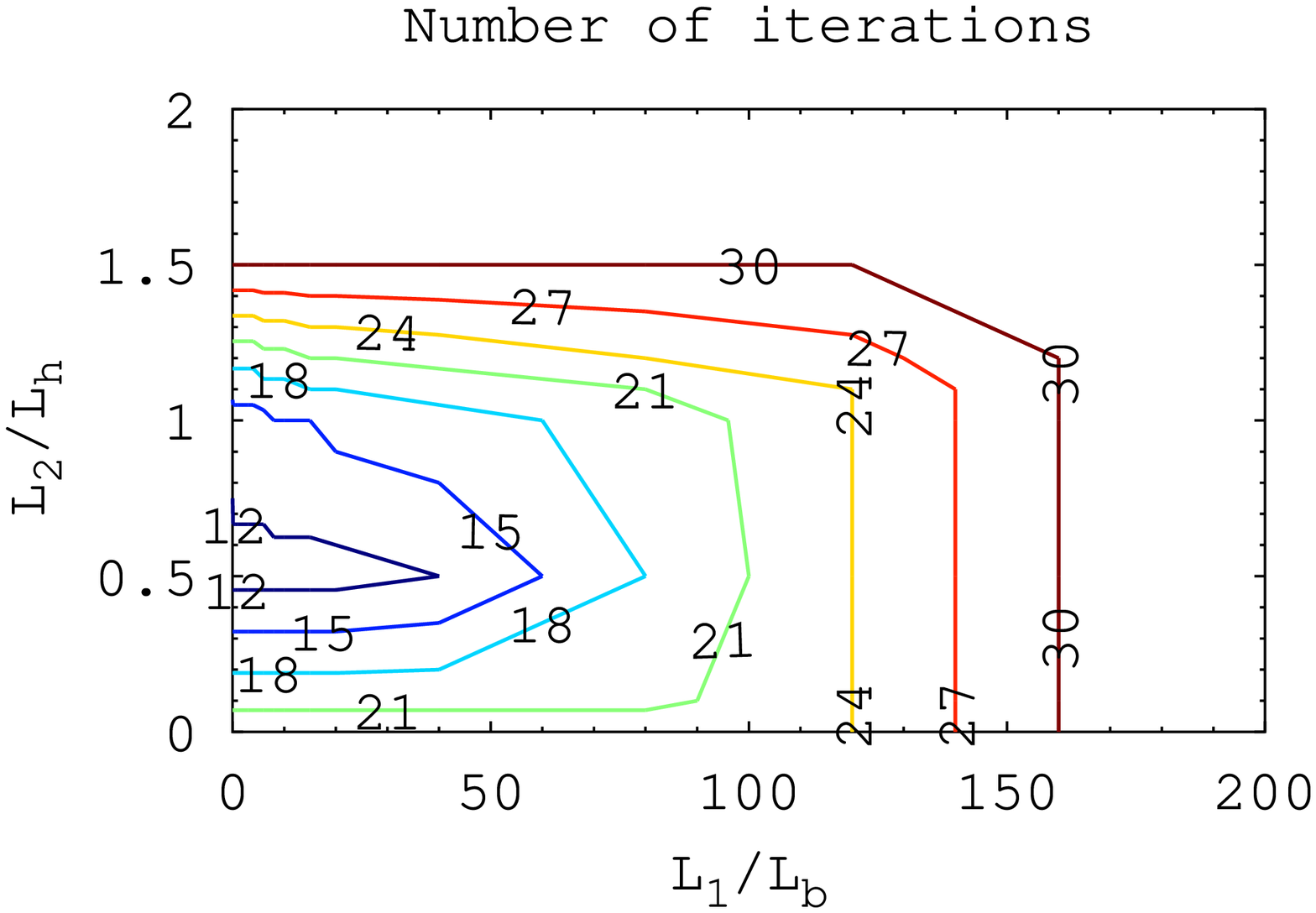} 
        \caption{Splitting}
       \label{FL1L2_SimpleNoLineal13}
    \end{subfigure}%
       ~ 
    \begin{subfigure}{0.5\textwidth}
    \centering
\includegraphics[scale=0.25]{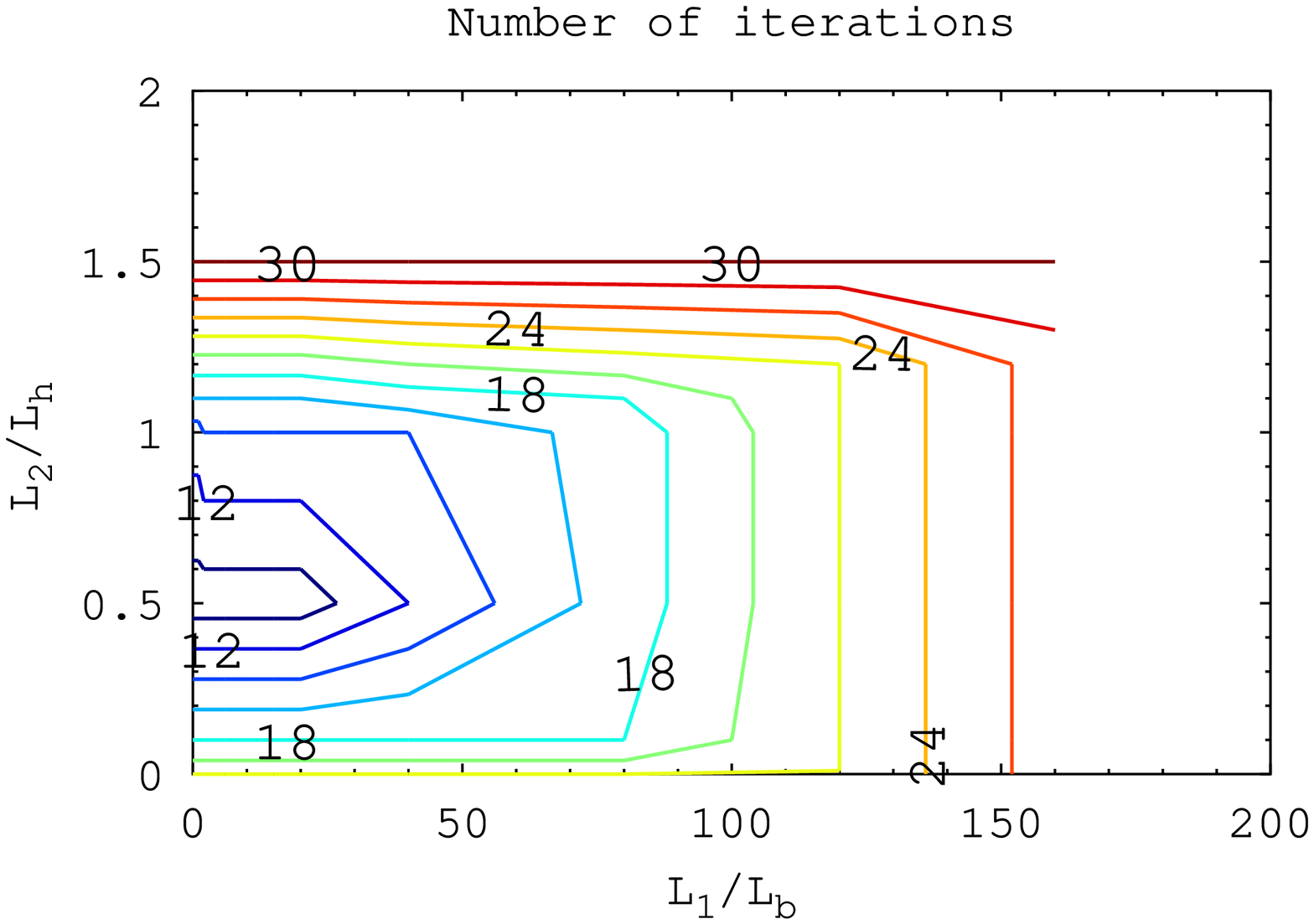}      
   \caption{Monolithic}
      \label{FL1L2_SimpleNoLineal23}
    \end{subfigure}    
    \caption{Performance of the iterative schemes for different values of $L_1$ and $L_2$ for  test problem 1, case 2: $b(p)=p^3;\ $ $h(\divv \uu)= \sqrt[3]{(\divv \uu)^5}$.}
    	    \label{FL1L2_iterationsG3}
           % \label{solution}
\end{figure}

\begin{figure}[h!]
\centering
 \begin{subfigure}{0.5\textwidth}
 \centering
\includegraphics[scale=0.25,trim={0 0 0 0},clip]{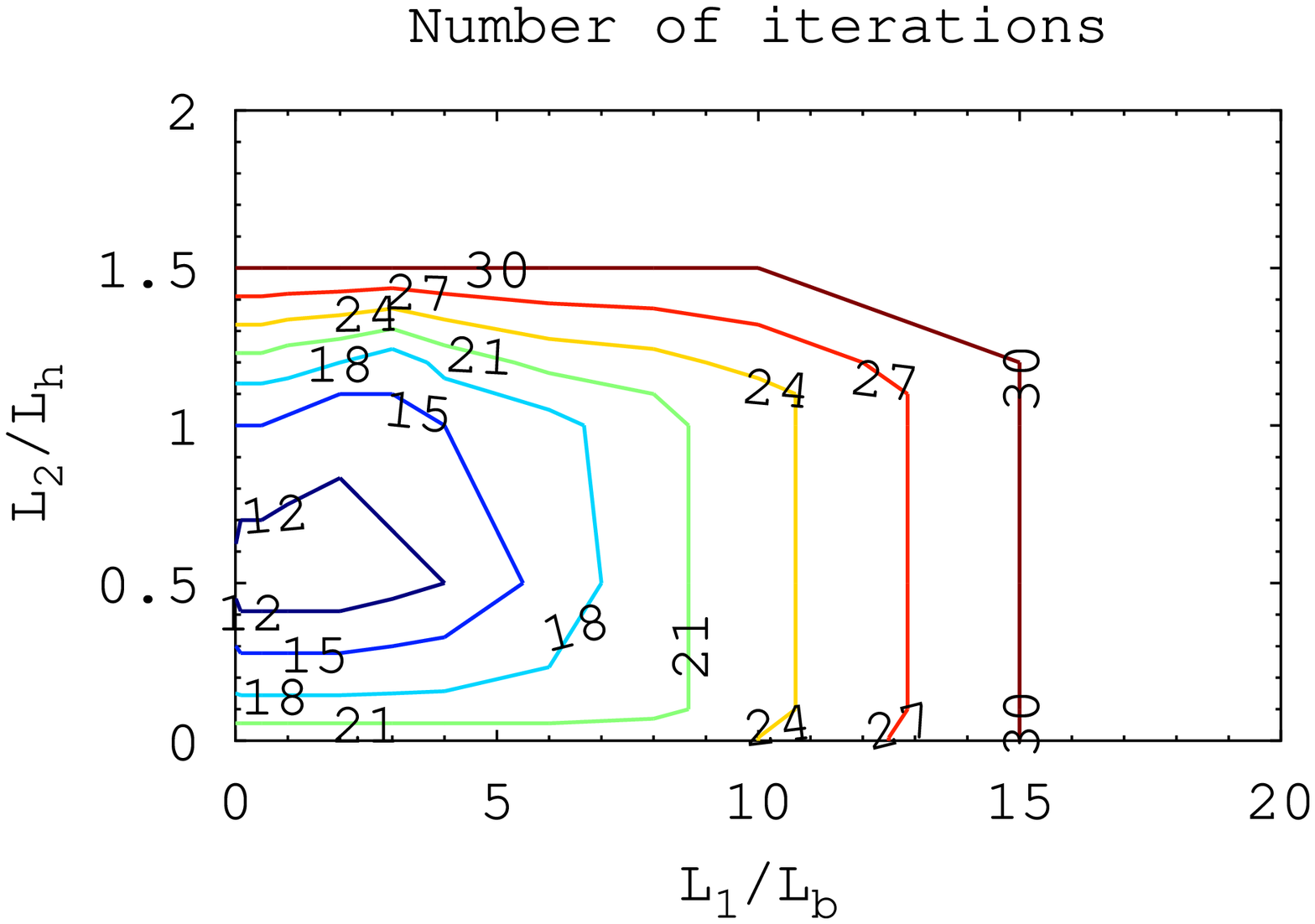} 
        \caption{Splitting}
       \label{FL1L2_SimpleNoLineal15}
    \end{subfigure}%
       ~ 
    \begin{subfigure}{0.5\textwidth}
    \centering
\includegraphics[scale=0.25]{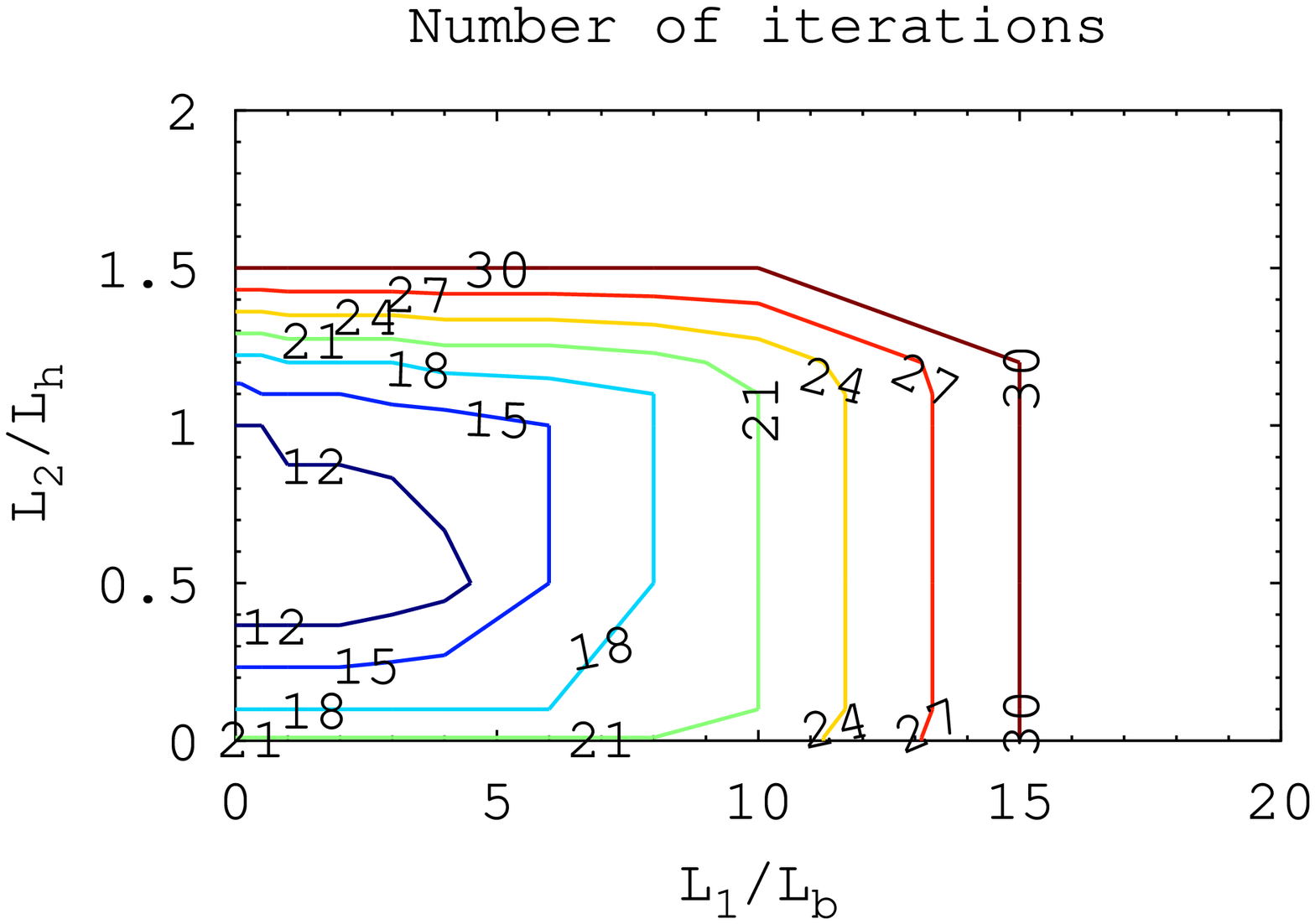}      
   \caption{Monolithic}
      \label{FL1L2_SimpleNoLineal25}
    \end{subfigure}    
    \caption{Performance of the iterative schemes for different values of $L_1$ and $L_2$ for test problem 1, case 3: $b(p)=\sqrt[3]{p};\ $ $h(\divv \uu)= \sqrt[3]{(\divv \uu)^5}$.}
    	    \label{FL1L2_iterationsG5}
           % \label{solution}
\end{figure}	
 
\begin{figure}[h]
\centering
 \begin{subfigure}{0.5\textwidth}
 \centering
\includegraphics[scale=0.25,trim={0 0 0 0},clip]{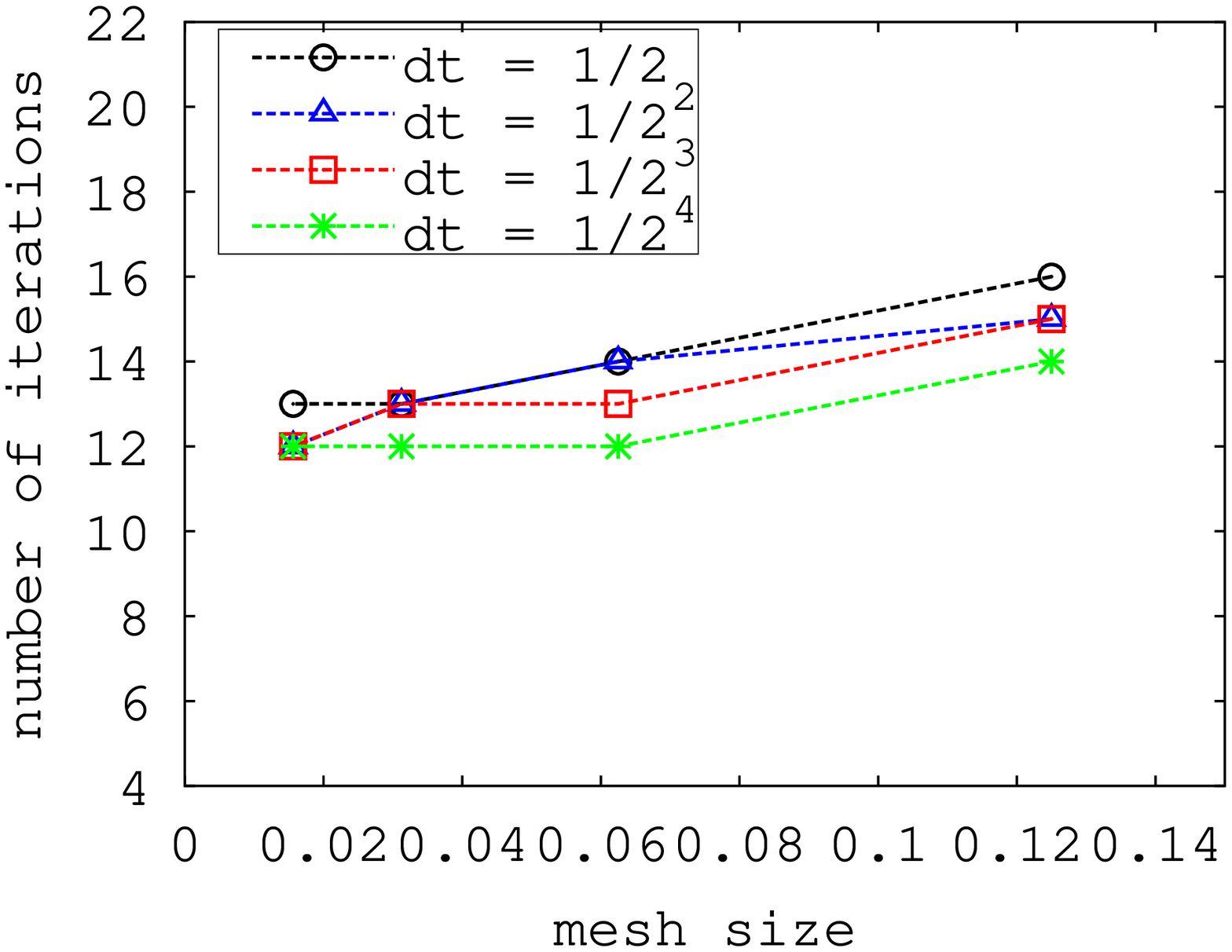} 
        \caption{Splitting}
       \label{dh_bexpS}
    \end{subfigure}%
       ~ 
    \begin{subfigure}{0.5\textwidth}
    \centering
\includegraphics[scale=0.25]{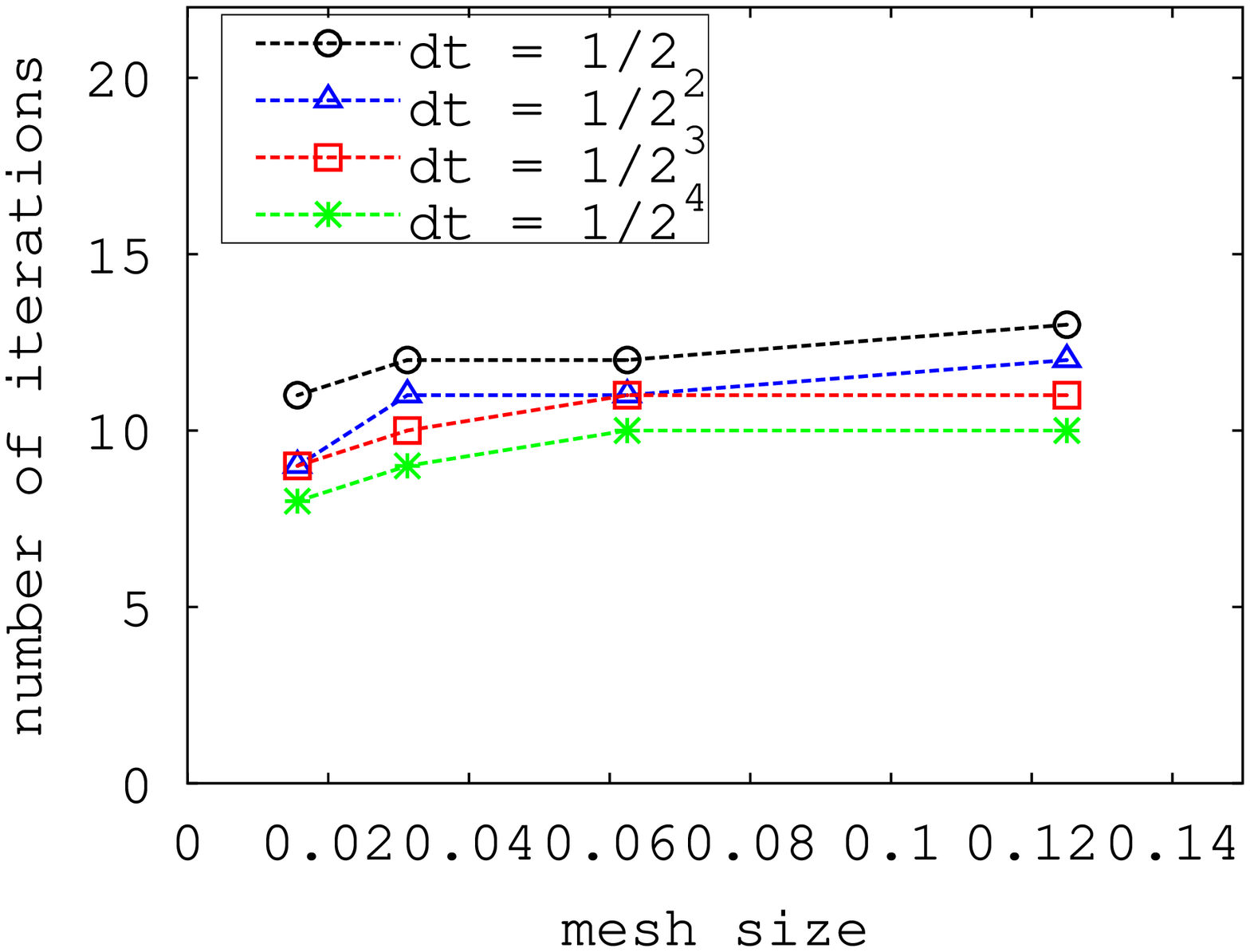}     
   \caption{Monolithic}
      \label{dh_bexpM}
    \end{subfigure}    
    \caption{Performance of the iterative schemes for different mesh sizes for test problem 1, case 1: $L_1=L_b;\ $  $L_2=L_h$.}
    	    \label{dh_bexp}
           % \label{solution}
\end{figure}	 	
 
 \begin{figure}[h]
\centering
 \begin{subfigure}{0.5\textwidth}
 \centering
\includegraphics[scale=0.25,trim={0 0 0 0},clip]{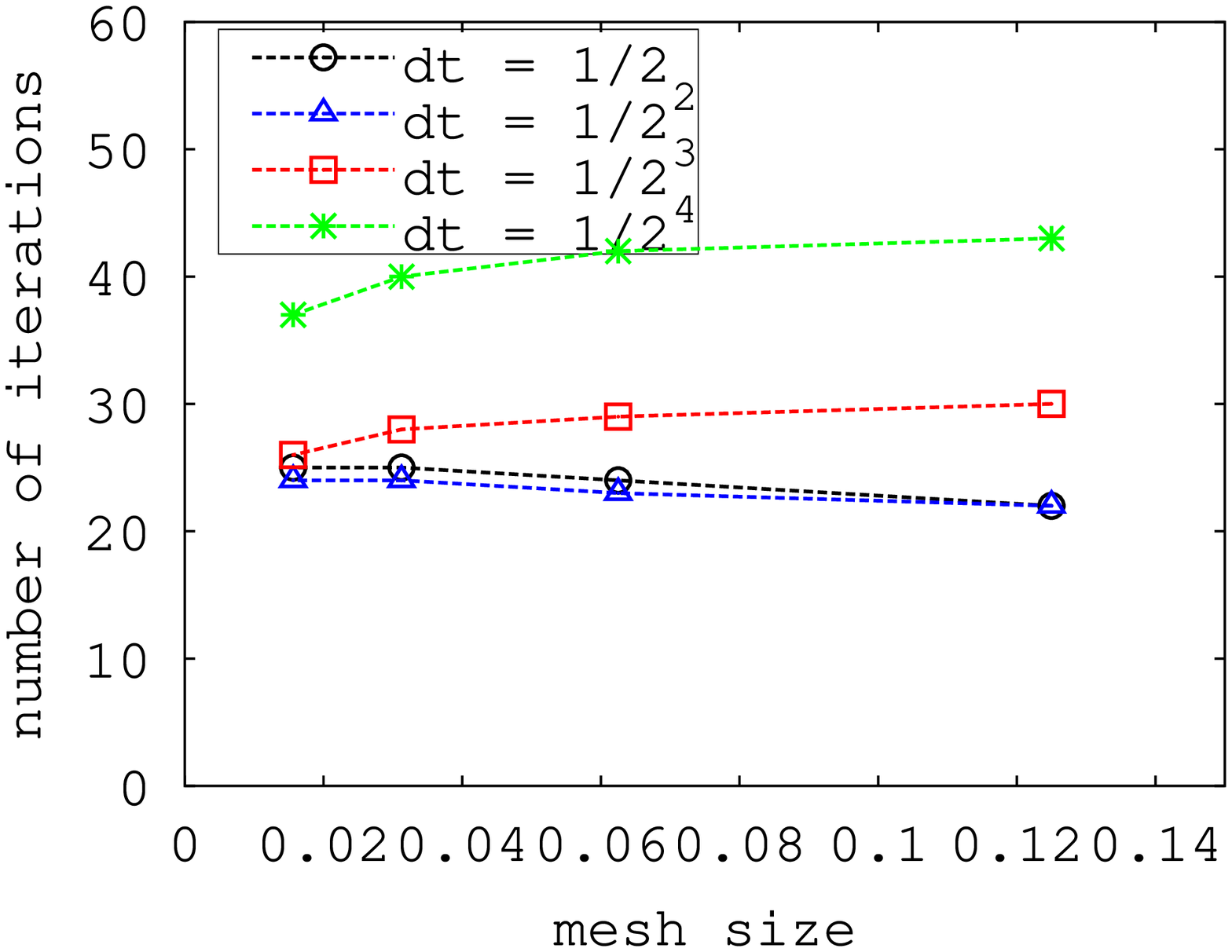} 
        \caption{Splitting}
       \label{dh_cbrtF2S}
    \end{subfigure}%
       ~ 
    \begin{subfigure}{0.5\textwidth}
    \centering
\includegraphics[scale=0.25]{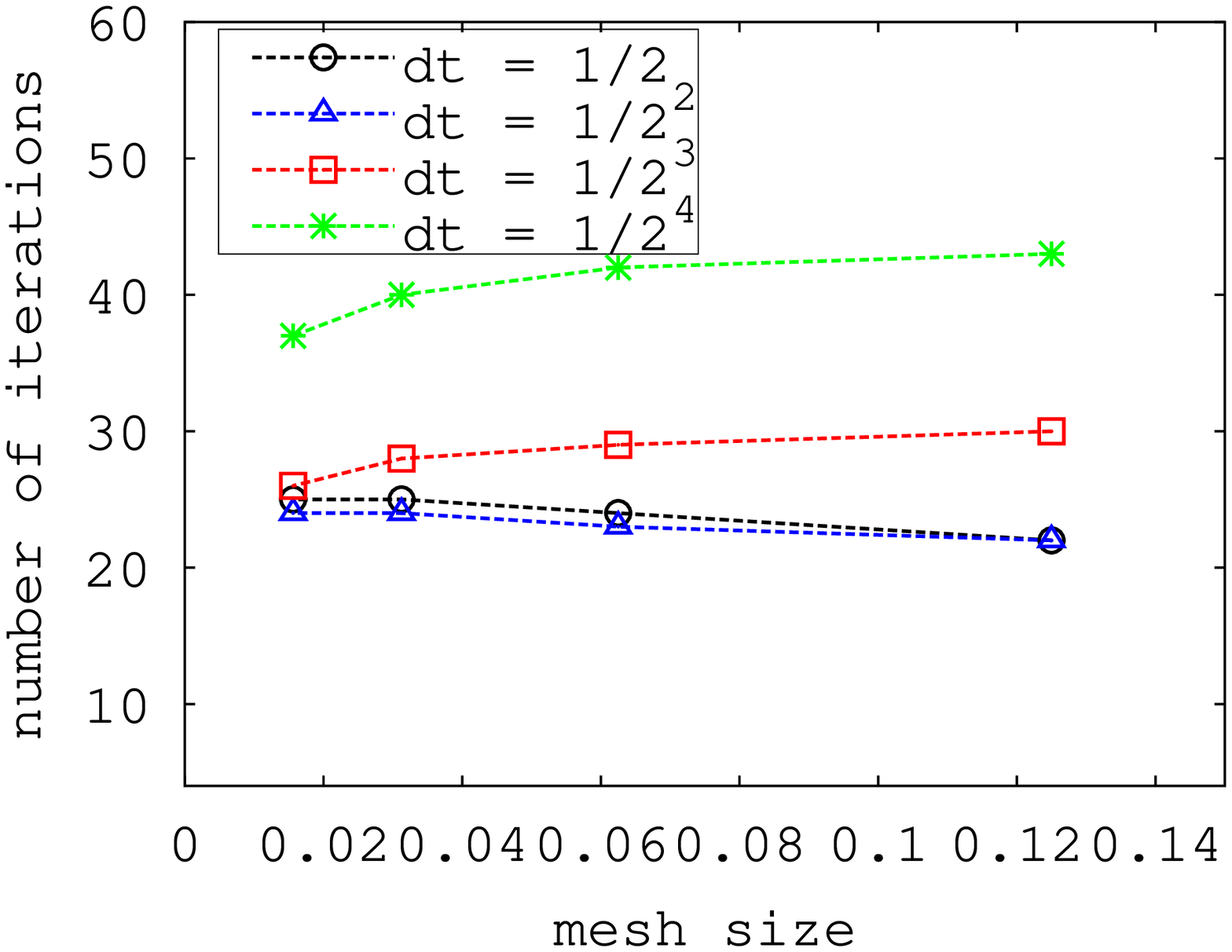}     
   \caption{Monolithic}
      \label{dh_cbrtF2M}
    \end{subfigure}    
    \caption{Performance of the iterative schemes for different mesh sizes for test problem 1, case 1: $L_1=10^{-3}L_b;\ $  $L_2=10^{-3}L_h$.}
    	    \label{dh_cbrtF2}
           % \label{solution}
\end{figure}	 	
 
 \begin{figure}[h]
\centering
 \begin{subfigure}{0.5\textwidth}
 \centering
\includegraphics[scale=0.25,trim={0 0 0 0},clip]{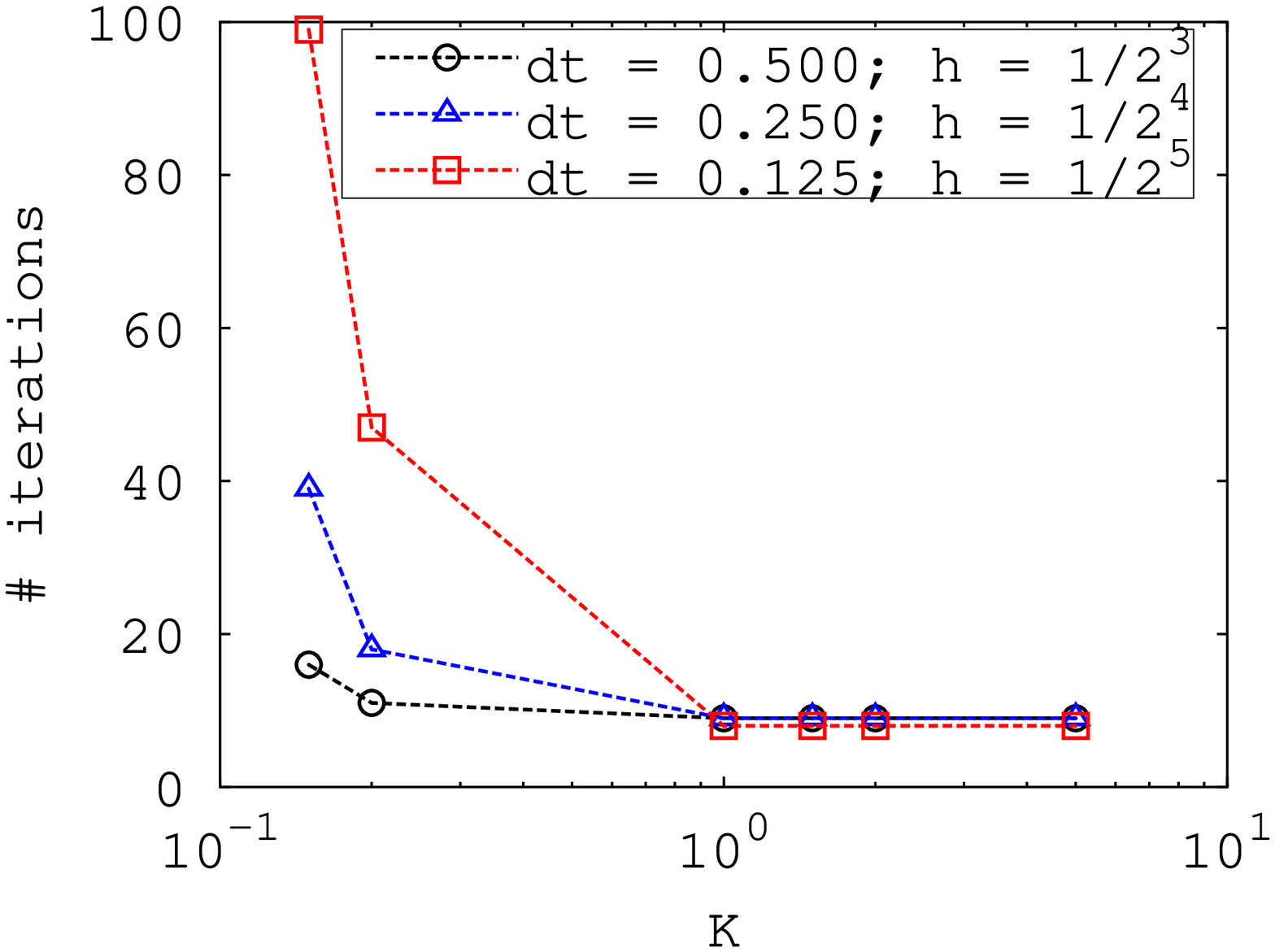} 
        \caption{Splitting}
       \label{FL1L2_SimpleNoLineal14}
    \end{subfigure}%
       ~ 
    \begin{subfigure}{0.5\textwidth}
    \centering
\includegraphics[scale=0.25]{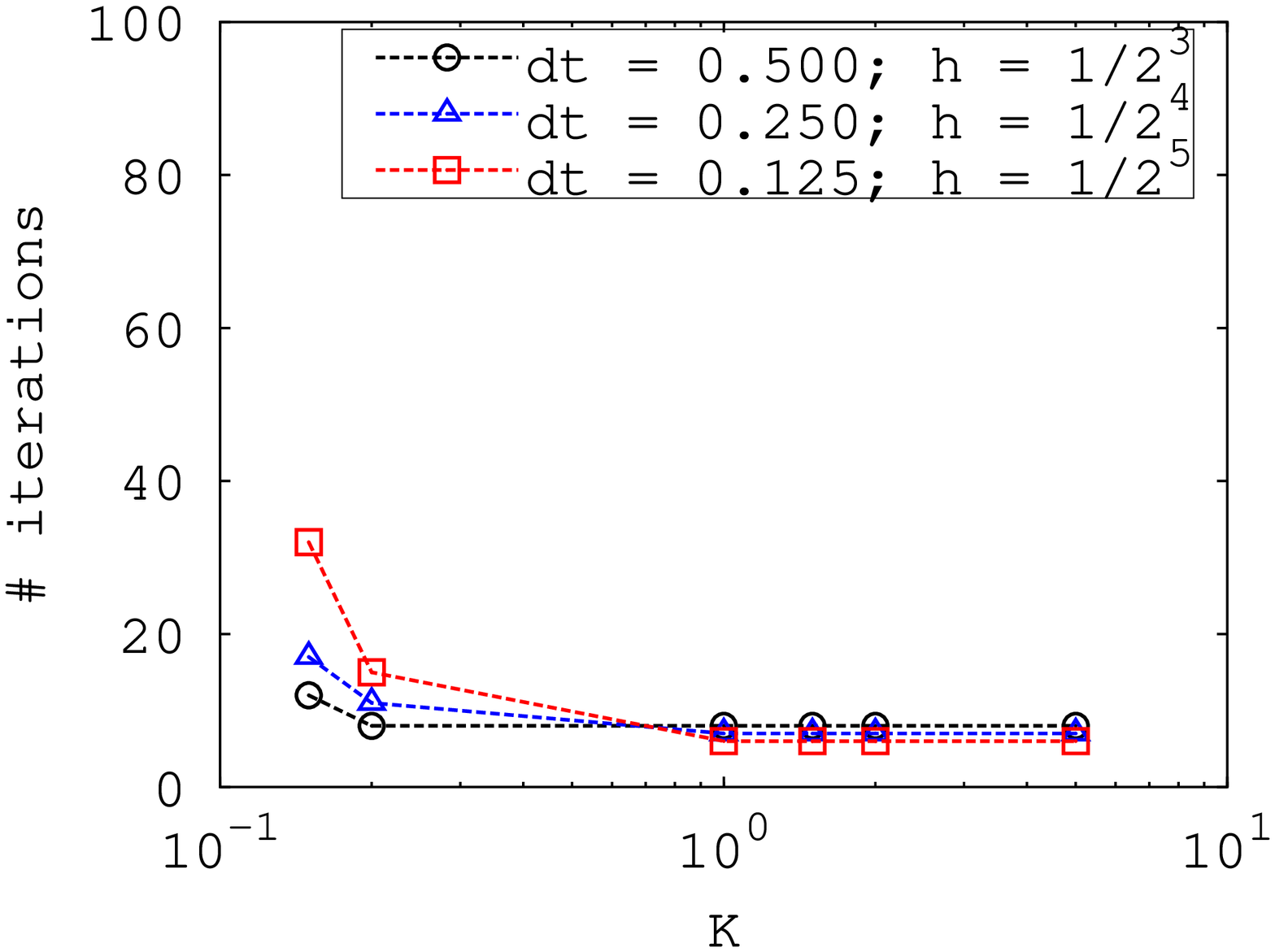}      
   \caption{Monolithic}
      \label{FL1L2_SimpleNoLineal24}
    \end{subfigure}    
    \caption{Number of iterations for different mesh sizes and different values of $\Delta t$, $K$ for test problem 1, case 3: $b(p)=p^3;\ $ $h(\divv \uu)=(\divv \uu) ^3$.}
    	    \label{KFL1L2_iterations4}
           % \label{solution}
\end{figure}	
 
%\FloatBarrier

  \begin{figure}[h]
\centering
 \begin{subfigure}{0.5\textwidth}
 \centering
\includegraphics[scale=0.25,trim={0 0 0 0},clip]{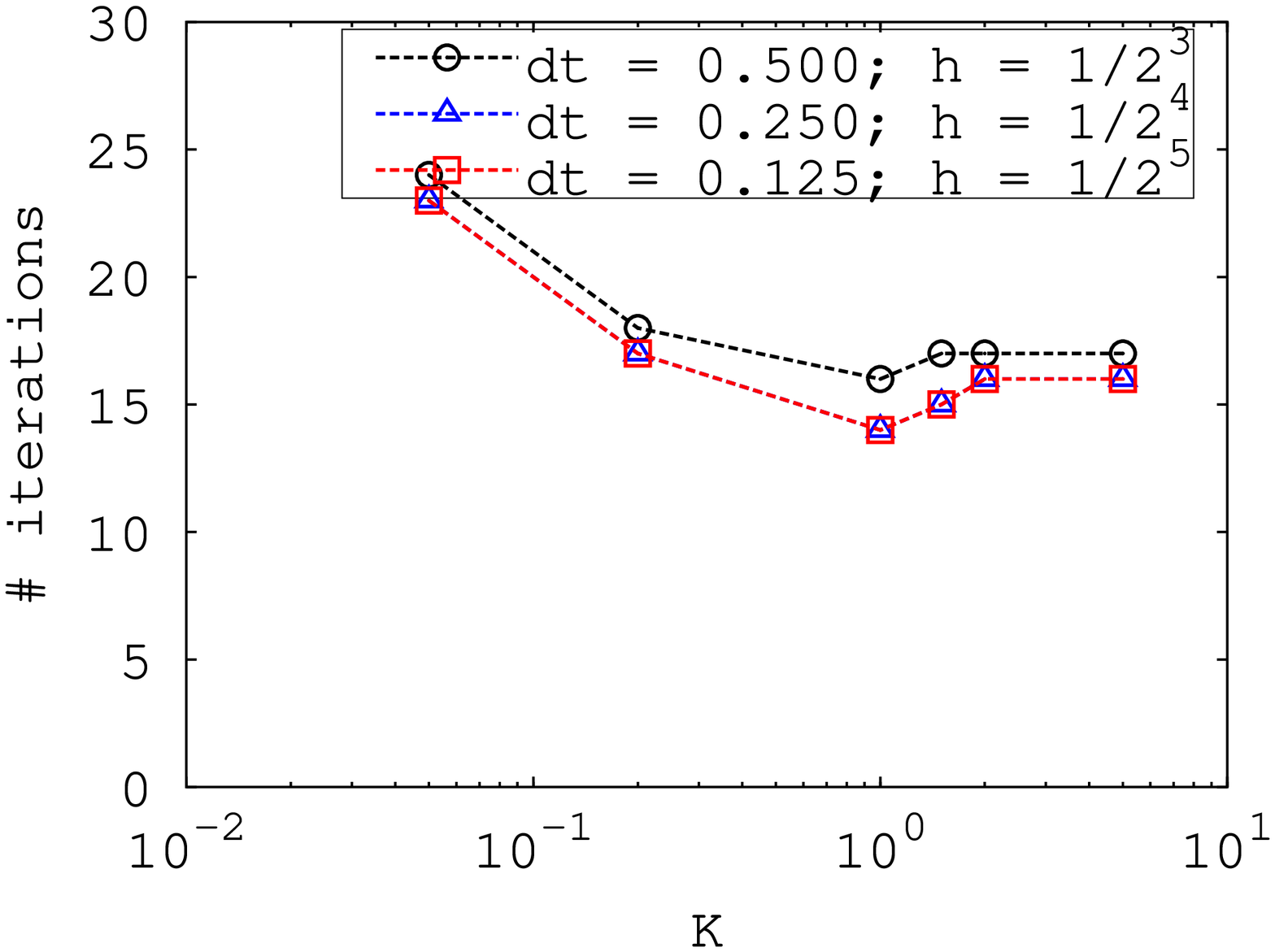} 
        \caption{Splitting}
       \label{KFL1L2_SimpleNoLineal14}
    \end{subfigure}%
       ~ 
    \begin{subfigure}{0.5\textwidth}
    \centering
\includegraphics[scale=0.25]{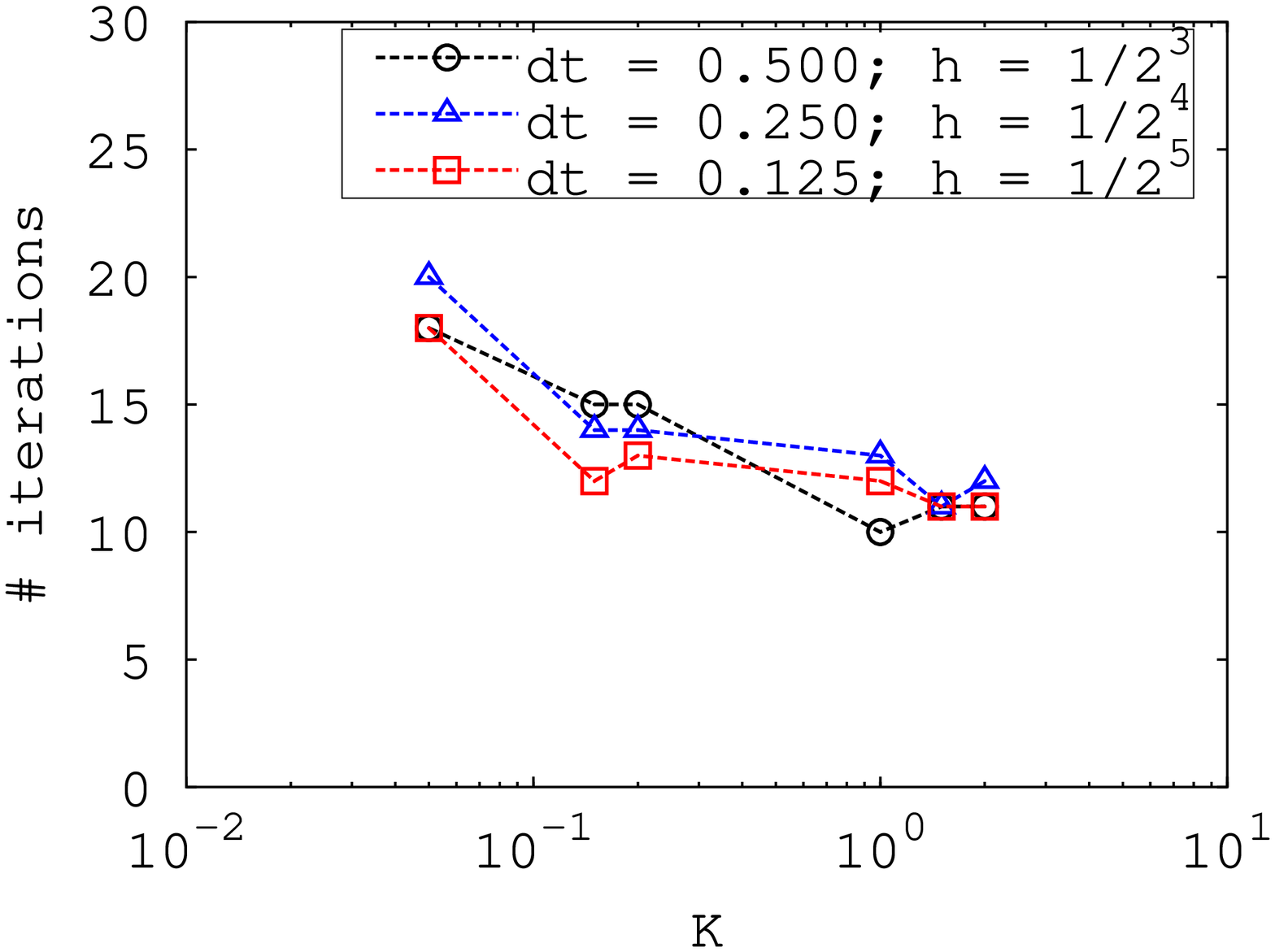}      
   \caption{Monolithic}
      \label{KFL1L2_SimpleNoLineal24}
    \end{subfigure}    
    \caption{Number of iterations for different mesh sizes and different values of $\Delta t$, $K$  for test problem 1, case 5: $b(p)=\sqrt[3]{p};\ $ $h(\divv \uu)=\sqrt[3]{(\divv \uu)^5}$.}
    	    \label{KFL1L2_iterations5}
           % \label{solution}
\end{figure}

\begin{figure}[h]
\centering
 \begin{subfigure}{0.5\textwidth}
 \centering
\includegraphics[scale=0.25,trim={0 0 0 0},clip]{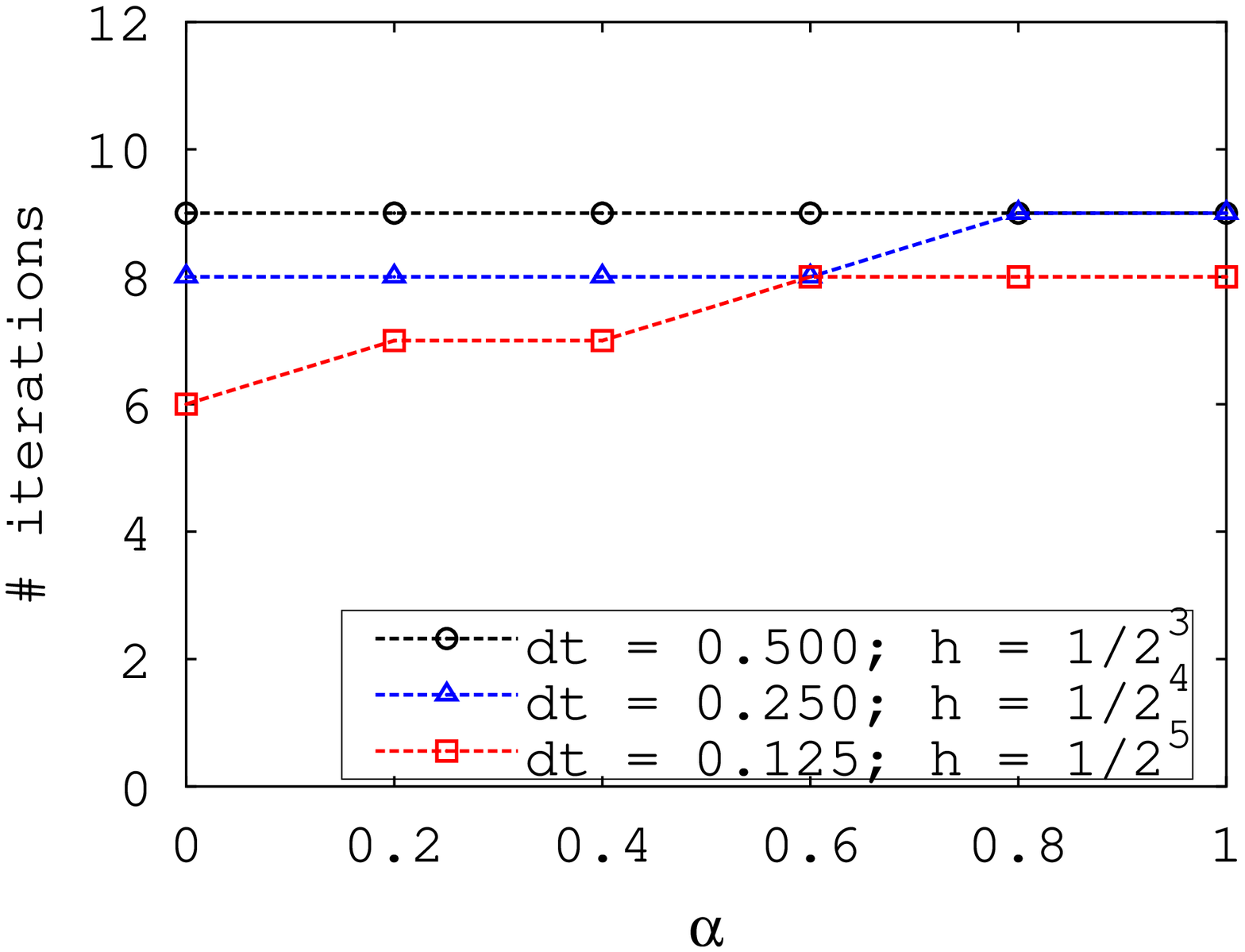} 
        \caption{Splitting}
       \label{FL1L2_SimpleNoLineal1}
    \end{subfigure}%
       ~ 
    \begin{subfigure}{0.5\textwidth}
    \centering
\includegraphics[scale=0.25]{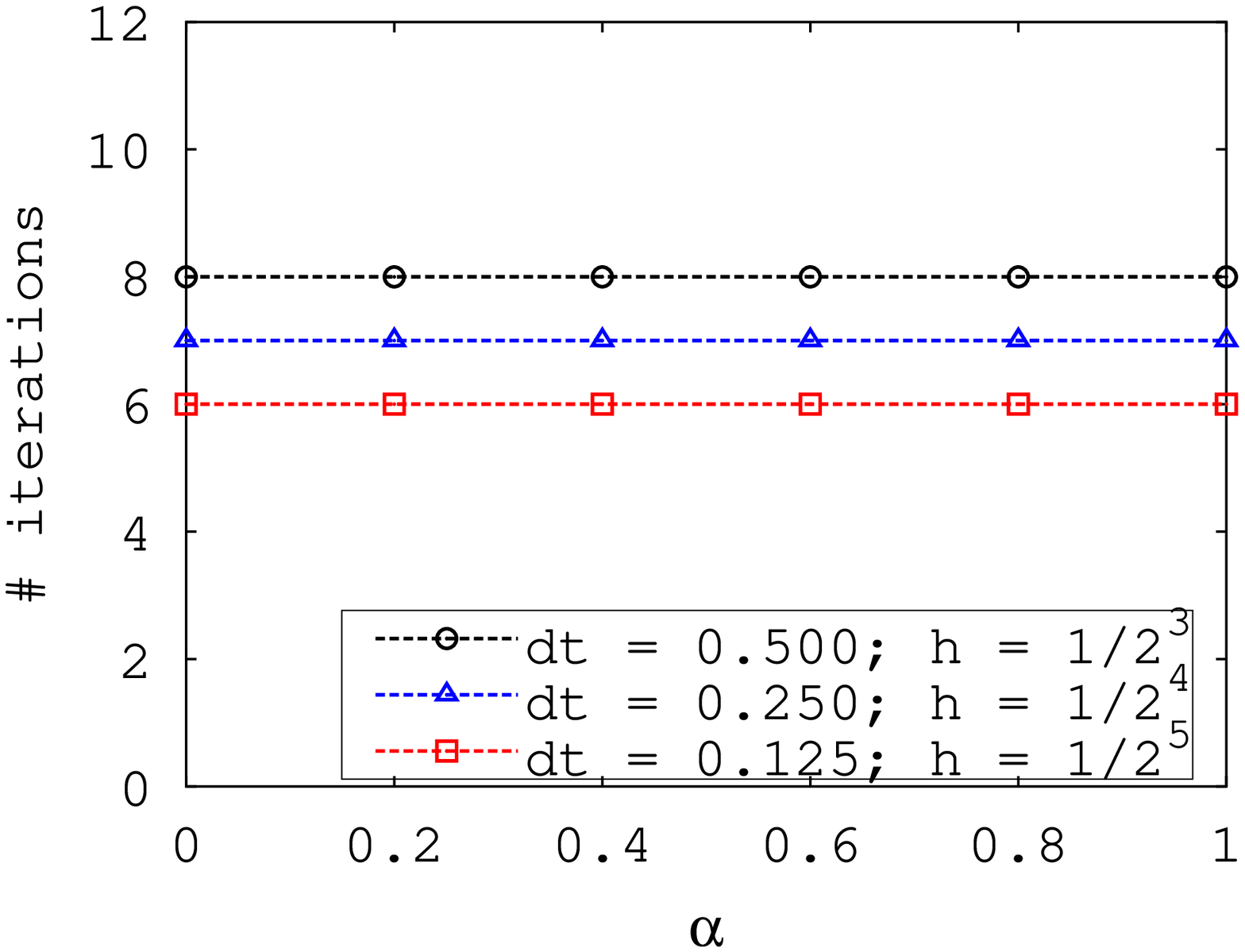}      
   \caption{Monolithic}
      \label{FL1L2_SimpleNoLineal2}
    \end{subfigure}    
    \caption{Number of iterations for different mesh sizes and different values of $\Delta t$, $\alpha$  for test problem 1, case 3: $b(p)=p^3;\ $ $h(\divv \uu)=(\divv \uu)^3$.}
    	    \label{AFL1L2_iterations5}
           % \label{solution}
\end{figure}

\begin{figure}[t]
\centering
 \begin{subfigure}{0.5\textwidth}
 \centering
\includegraphics[scale=0.25,trim={0 0 0 0},clip]{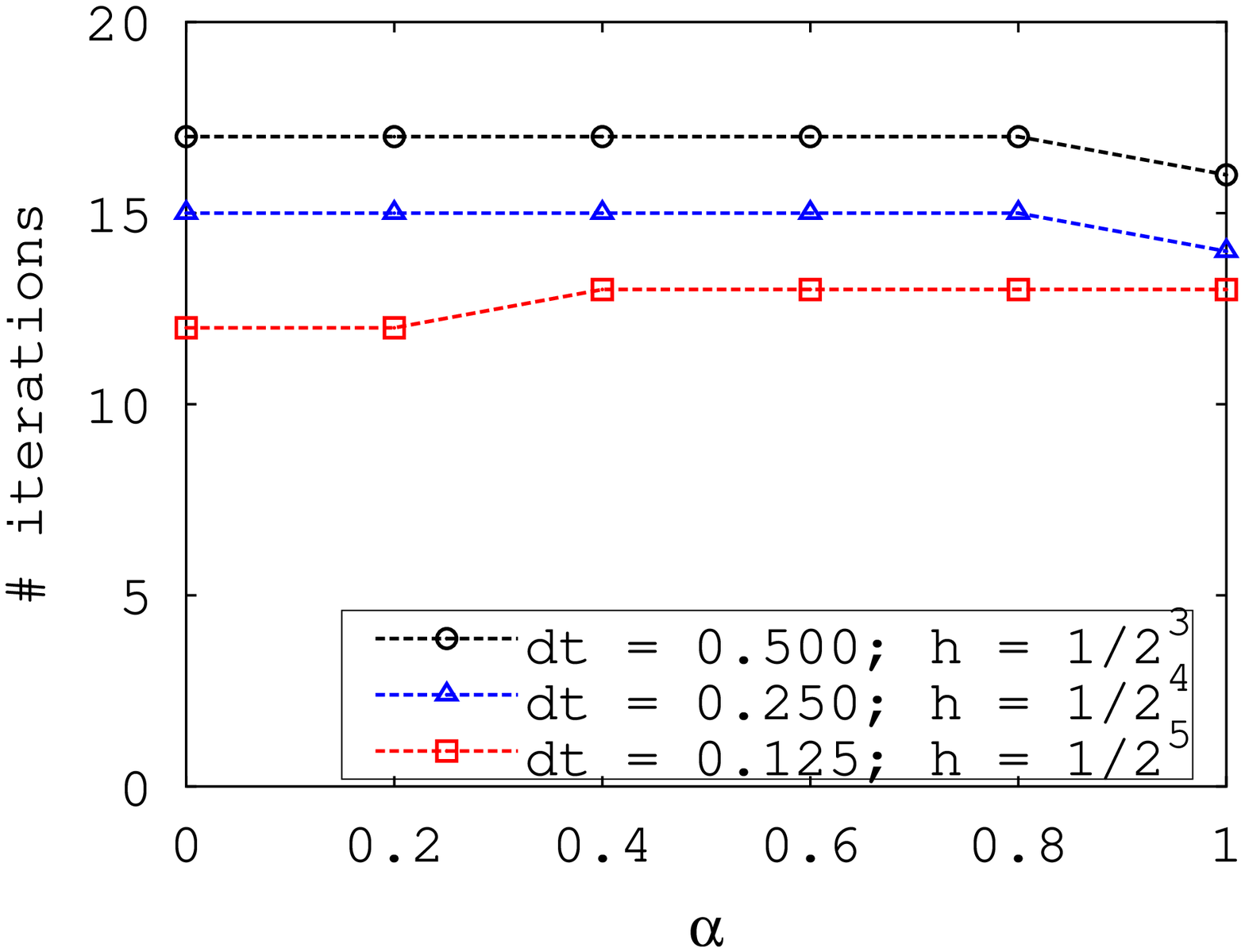} 
        \caption{Splitting}
       \label{AFL1L2_iterations6}
    \end{subfigure}%
       ~ 
    \begin{subfigure}{0.5\textwidth}
    \centering
\includegraphics[scale=0.25]{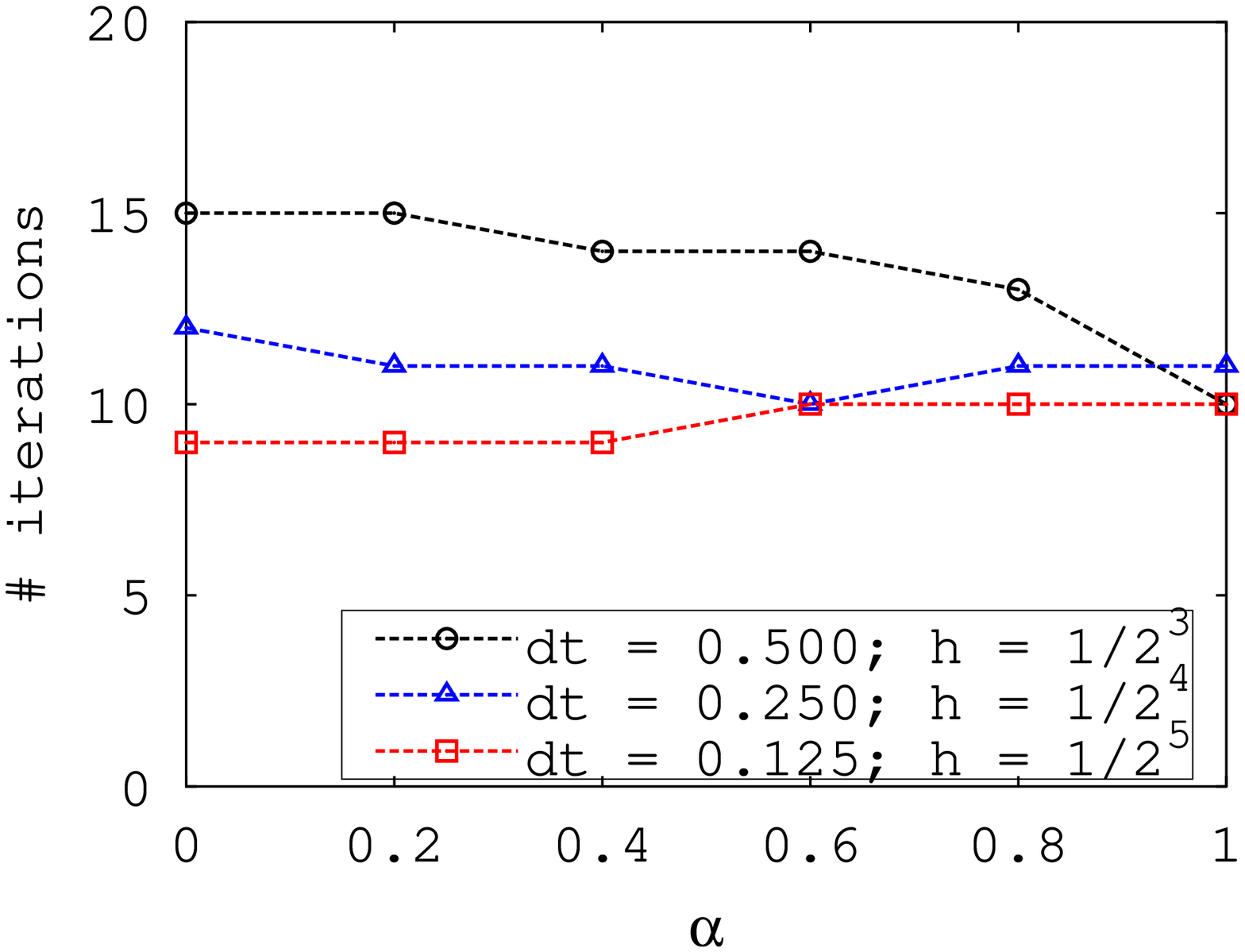}      
   \caption{Monolithic}
      \label{FL1L2_SimpleNoLineal2}
    \end{subfigure}    
    \caption{Number of iterations for different mesh sizes and different values of $\Delta t$, $\alpha$  for test problem 1, case 3: $b(p)=\sqrt[3]{p};\ $ $h(\divv \uu)=\sqrt[3]{(\divv \uu)^5}$.}
    	    \label{AFL1L2_iterations6}
           % \label{solution}
\end{figure}

%\textbf{(Flexibility)}
%\textbf{(comment: The number of iterations on the linearization procedure is  reduced when we applied preconditioning \ref{Linearization_procedure}. Besides the splitting scheme written in the residual form requires update the non linear residual.
%Because of this, we may think to update the non linear residual in the iterative solver. I'm not sure how to implement it yet).}

\begin{figure}[b]
    \centering
    \begin{subfigure}[h]{0.5\textwidth}
        \centering
        \includegraphics[scale=0.5]{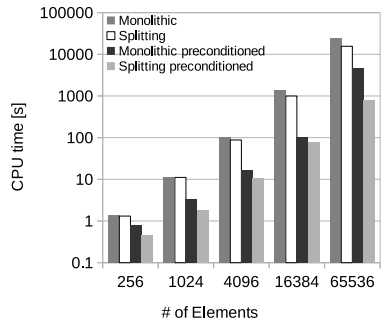}
        \caption{CPU time}
                \label{timeL_scheme}	
    \end{subfigure}%
    ~ 
    \begin{subfigure}[h]{0.5\textwidth}
        \centering
     \includegraphics[scale=0.5]{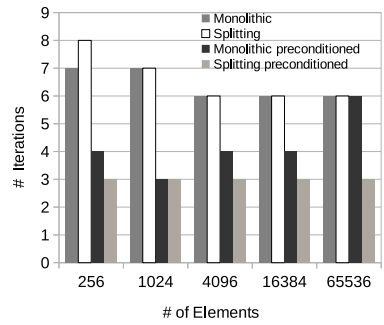}
        \caption{Linearisation procedure}
                \label{Linearization_procedure}
    \end{subfigure}
    ~
       \begin{subfigure}[h]{0.5\textwidth}
        \centering
     \includegraphics[scale=0.5]{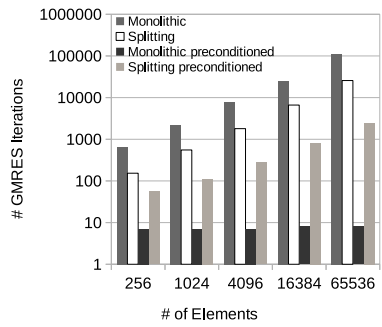}
        \caption{Number GMRES iterations}
        \label{GMRESITERATION}
    \end{subfigure}
    \caption{Performance comparison between the splitting L-scheme and the monolithic L-scheme solver for the case 1}
\label{L_scheme}	
\end{figure}

%	\FloatBarrier
		
\clearpage

\noindent {\bf Test problem 2: a non-linear extension of Mandel's problem}\\

Mandel's problem is a relevant  2D benchmark problem with a known analytical solution  \cite{Abousleiman,Mandel}. The problem is very often used in the community, see e.g. \cite{RIS_0,Mikelic,wheeler,Carmen_2016} for verifying  the implementation and the performance of the schemes.

Mandel's problem consists in a poroelastic slab of extent $2a$ in the $x$ direction, $2b$ in the $y$ direction, and infinitely long in the z-direction, and is sandwiched between two rigid impermeable plates (see Figure \ref{xMandelProblem}). At time $t=0$, a uniform vertical load of magnitude $2F$ is applied and equal, but upward force is applied to the bottom plate. This load is supposed to remain constant. The domain is free to drain and stress-free at $x=\pm a$. Gravity is neglected.

For the numerical solution, the symmetry of the problem allows us to use a quarter of the physical domain as a computational domain (see Figure \ref{tMandel2Problem2}). Moreover, the rigid plate condition is enforced by adding constrained equations so that vertical displacement $U_y(b,t)$ on the top are equal to a known constant value.
	
\begin{figure}[h!]
    \centering
    \begin{subfigure}{0.5\textwidth}
        \centering
        		\includegraphics[scale=0.6]{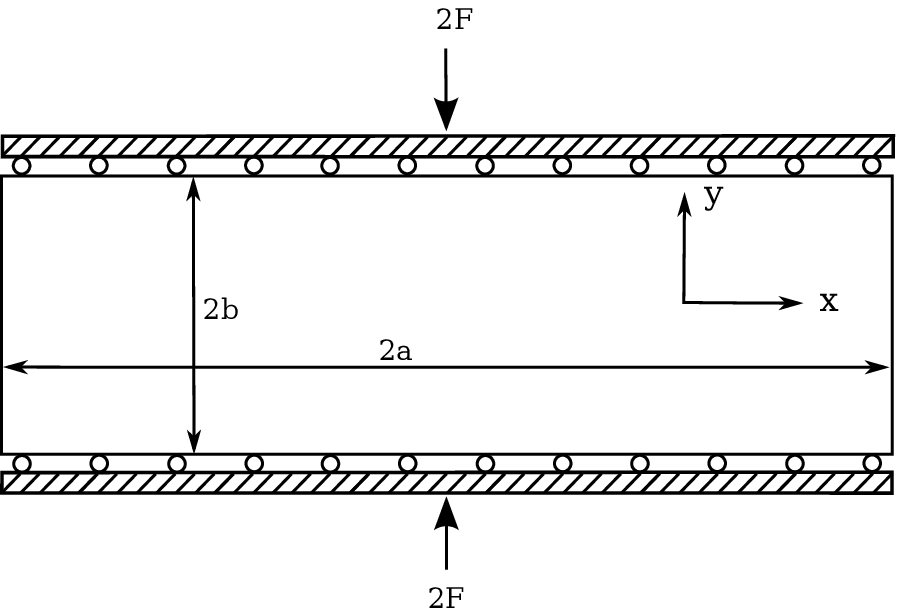} 
        \caption{Mandel's problem domain.}
                    \label{xMandelProblem}	
    \end{subfigure}%
    ~ 
    \begin{subfigure}{0.5\textwidth}
        \centering
     		\includegraphics[scale=0.6]{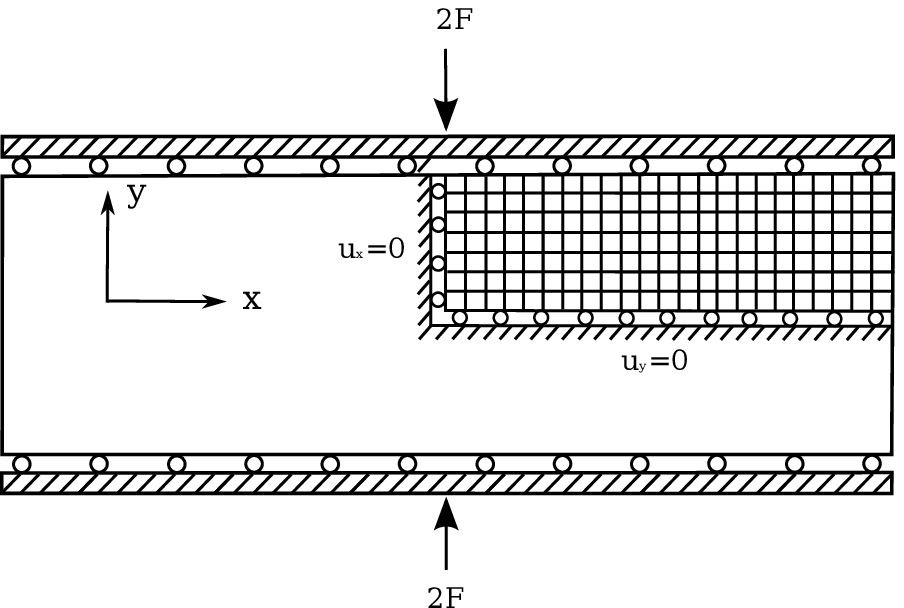} 
        \caption{Mandel's problem quarter domain}
                \label{tMandel2Problem2}
    \end{subfigure}
    \caption{Mandel's problem}
                \label{MandelProblem}	
\end{figure}	
%\FloatBarrier

The application of a load (2F) causes an instantaneous and uniform pressure increase throughout the domain \cite{GayX}; this is predicted theoretically 
\cite{Abousleiman} and it can be used as an initial condition
	\begin{align}
    p(x,y,0)&=\frac{F B(1+v_u)}{3a}, \nonumber\\
    \qq(x,y,0)&=\vec{0},\nonumber\\
 \uu(x,y,0)&=  \begin{pmatrix} \frac{F v_u x}{2\mu},  &  \frac{-F b(1-v_u)y}{2\mu a} \end{pmatrix}^t.\nonumber
        \end{align} 

The input parameters for Mandel's problem are listed in Table \ref{Parameter}, and the boundary conditions are specified in Table \ref{BoundaryCondition}. 
	 \begin{table}[h!]
\centering
\caption{Boundary conditions for Mandel's problem}
  \begin{tabular}{ l  l  l }
    \hline
    Boundary & Flow & Mechanics \\ 
    \specialrule{.1em}{.05em}{.05em} 
    $x=0$ & $ \qq \cdot \vec{n}=0$ & $\uu \cdot  \vec{n}=0$ \\
    $y=0$ & $ \qq \cdot \vec{n}=0$ & $\uu \cdot  \vec{n}=0$ \\
    $x=a$ & $p=0$                  & $\vec{\sigma} \cdot  \vec{n}=0$ \\
    $y=b$ & $ \qq \cdot \vec{n}=0$ & $\vec{\sigma}_{12}=0$; $\uu \cdot \vec{n} = U_y(b,t)$  \\
\hline
  \end{tabular}
  \label{BoundaryCondition}
	\end{table}  
 %	\FloatBarrier

\begin{table}[h!]
\centering
\caption{Input parameter for Mandel's problem}
  \begin{tabular}{ l  l  l }
    \hline
    Symbol & Quantity & Value \\ 
    \specialrule{.1em}{.05em}{.05em} 
    a & Dimension in $x$ & 100 m \\
    b & Dimension in $y$ & 10 m \\
    K & Permeability  & 100 D \\
    $\mu_f$ & Dynamic viscosity  & 10 cp \\
    $\alpha$ &  Biot's constant  & 1.0 \\
    M & Biot's modulus & $1.65\times 10^{10}$ Pa \\
    $\mu$& Lame coefficients & $2.4750\times 10^{9}$  \\
    $\lambda$& Lame coefficients & $ 1.6500\times 10^{9}$  \\
    $\Delta x$& Grid spacing  in $x$ & 2.5 m \\
    $\Delta y$& Grid spacing  in $y$ & 0.25 m\\ 
    $\Delta t$& Time step & 1 s\\ 
    $t_T$& Total simulation time  & 500 s\\ 
    \hline
  \end{tabular}
  \label{Parameter}
	\end{table}

 In Figure \ref{solution}, the solution of the system for the variables pressure and displacement is depicted. This implementation demonstrates the Mandel-Cryer effect, first showing a pressure raise during the first 20 seconds and then, a sudden dissipation throughout the domain. 
	%	
%	\FloatBarrier
\begin{figure}[t]
    \centering
    \begin{subfigure}{0.5\textwidth}
        \centering
        \includegraphics[scale=.3]{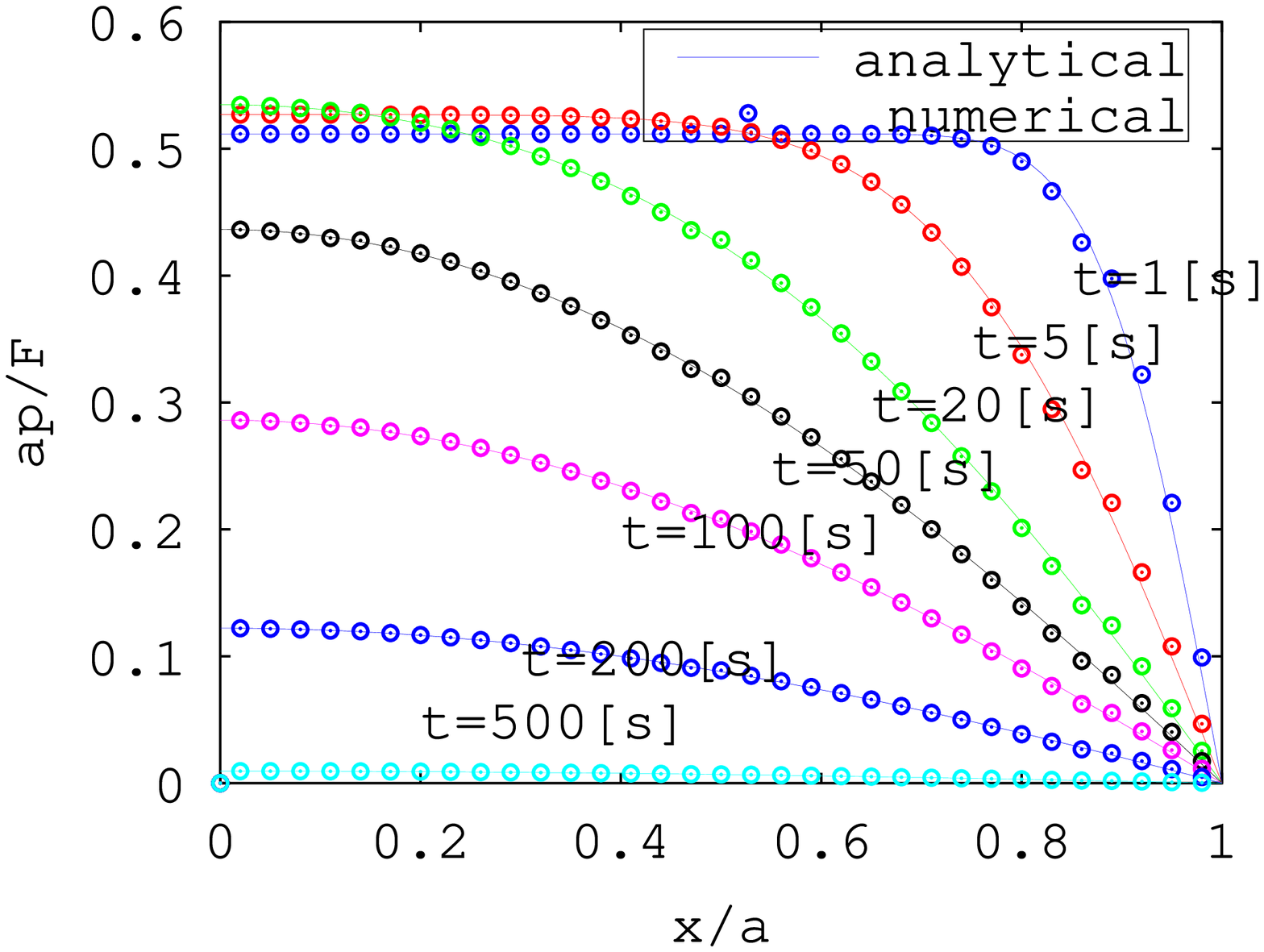} 
        \caption{Pressure solution}
                \label{solutiona}
    \end{subfigure}%
    ~ 
    \begin{subfigure}{0.5\textwidth}
        \centering
      \includegraphics[scale=0.3]{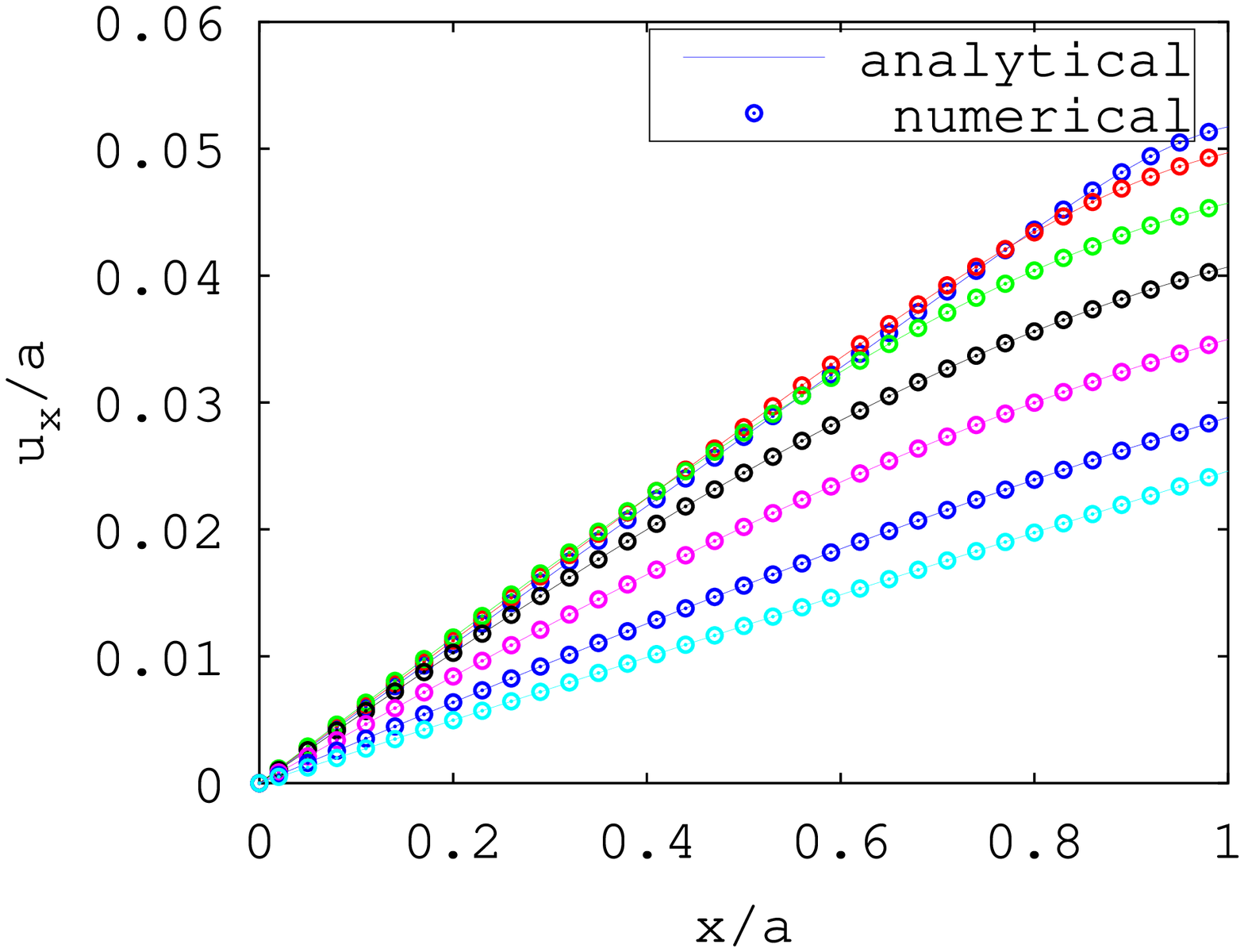}
        \caption{Displacement solution}
        \label{solutionb}
    \end{subfigure}
    \caption{Pressure and displacement match results for Mandel's problem}
            \label{solution}
\end{figure}

We consider now a non-linear extension of Mandel's problem. We use the same parameters, boundary and initial conditions as in the linear case above  (see Table \ref{Parameter}). We propose different coefficient functions $b(\cdot)$  and $h(\cdot)$ to study the performance of the proposed schemes. 

 \begin{table}[b]
 \begin{center}
 \label{Nonlinearfuntions2}
\caption{Cases for test problem 2}
 \begin{tabular}{lc c}
\hline
Case & $b(p)$& $h(\divv{\uu})$\\[0.2cm]
\hline
1  &$\frac{p + p^{3}}{M}$& $ \lambda  \divv{\uu} +  \lambda (\divv{\uu})^{3} $ \\[0.2cm]
2 &$ \frac{p + \sqrt[3]{p}}{M} $& $ \lambda  \divv{\uu} +  \lambda \sqrt[3]{(\divv  \uu)^5} $ \\[0.2cm]
 3 &$ \frac{e^p}{M} $&$ \lambda  \divv{\uu} +  \lambda \sqrt[3]{(\divv  \uu)^5} $ \\
\hline
\end{tabular}
 \end{center}
\end{table}
%\FloatBarrier

The influence of nonlinearities on the convergence is smaller for the test problem 1, see Figures \ref{MFL1L2_iterations}-\ref{MFL1L2_iterations3}. Moreover, both schemes are even more sensitive on the choice of the tuning parameter $L_2$.  We point out that the schemes are converging also for $L_2 = 0$, but the convergence is slow (around 200 iterations). The schemes are performing similarly.

Figures \ref{dh_3} - \ref{AFL1L2_iterations10} illustrates the influence of the mesh size, time step, value of permeability $K$ and Biot's coupling coefficient on the convergence of the schemes for test problem 2. We observe, again according to the theory, that a higher time step or higher permeability imply a faster convergence. The schemes are converging faster when the time step increases and that the convergence is not depending of the mesh diameter, confirming again the theory.
  We remark a slight increase in the number of iterations for an increasing Biot coefficient $\alpha$, see Figures \ref{AFL1L2_iterations3} and \ref{AFL1L2_iterations10}.

\begin{figure}[h!]
\centering
 \begin{subfigure}{0.5\textwidth}
 \centering
\includegraphics[scale=0.3,trim={0 0 0 0},clip]{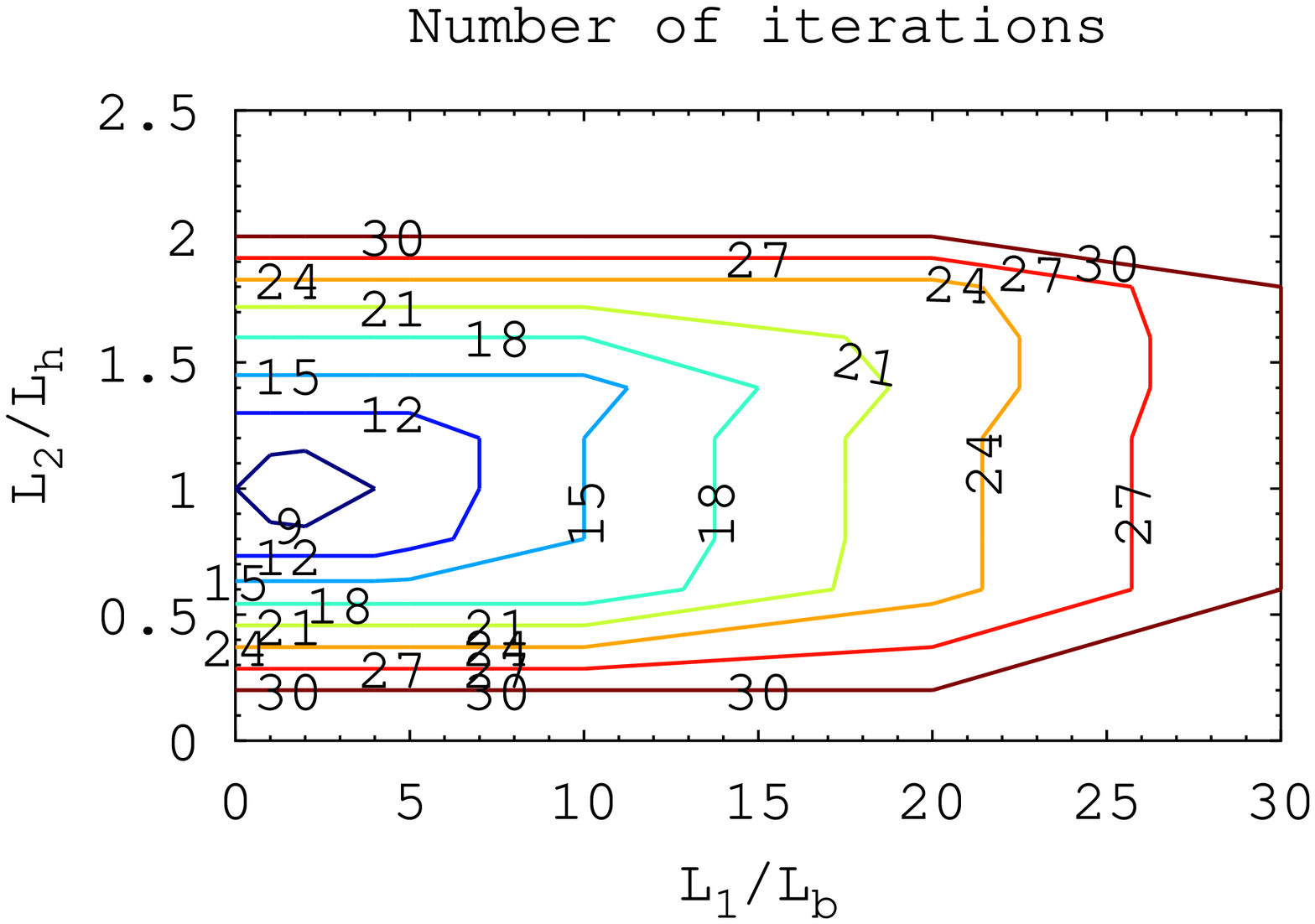} 
        \caption{Splitting}
       \label{MFL1L2_SimpleNoLineal1}
    \end{subfigure}%
       ~ 
    \begin{subfigure}{0.5\textwidth}
    \centering
\includegraphics[scale=0.3]{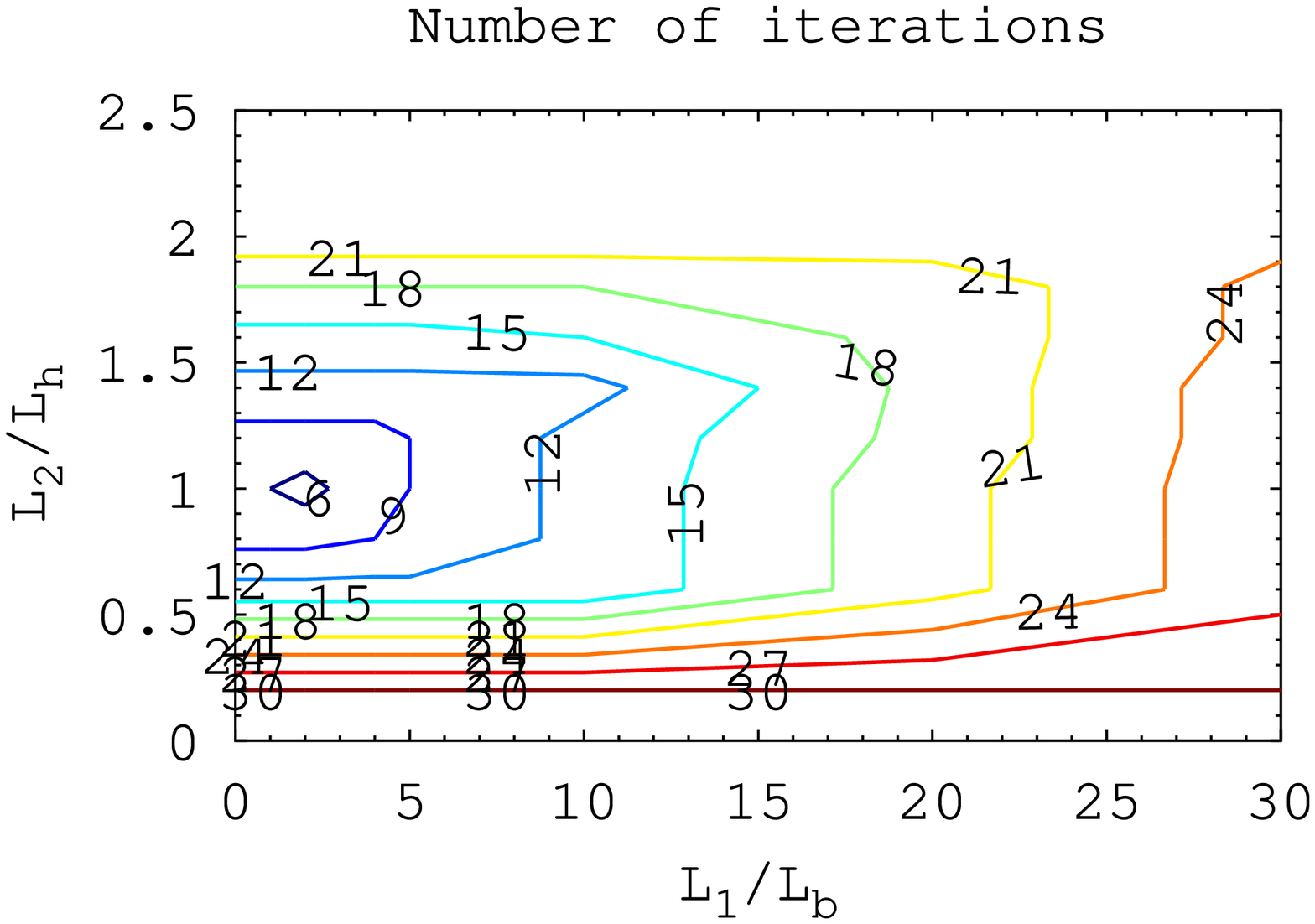}      
   \caption{Monolithic}
      \label{MFL1L2_SimpleNoLineal2}
    \end{subfigure}    
    \caption{Performance of the iterative schemes for different values of $L_1$ and $L_2$ for  test problem 2, case 1:  $b(p)=\frac{p+p^3}{M};\ $ $h(\divv \uu)=\lambda\divv \uu+\lambda(\divv \uu) ^3$.}
    	    \label{MFL1L2_iterations}
           % \label{solution}
\end{figure}	 

\begin{figure}[h!]
\centering
 \begin{subfigure}{0.5\textwidth}
 \centering
\includegraphics[scale=0.3,trim={0 0 0 0},clip]{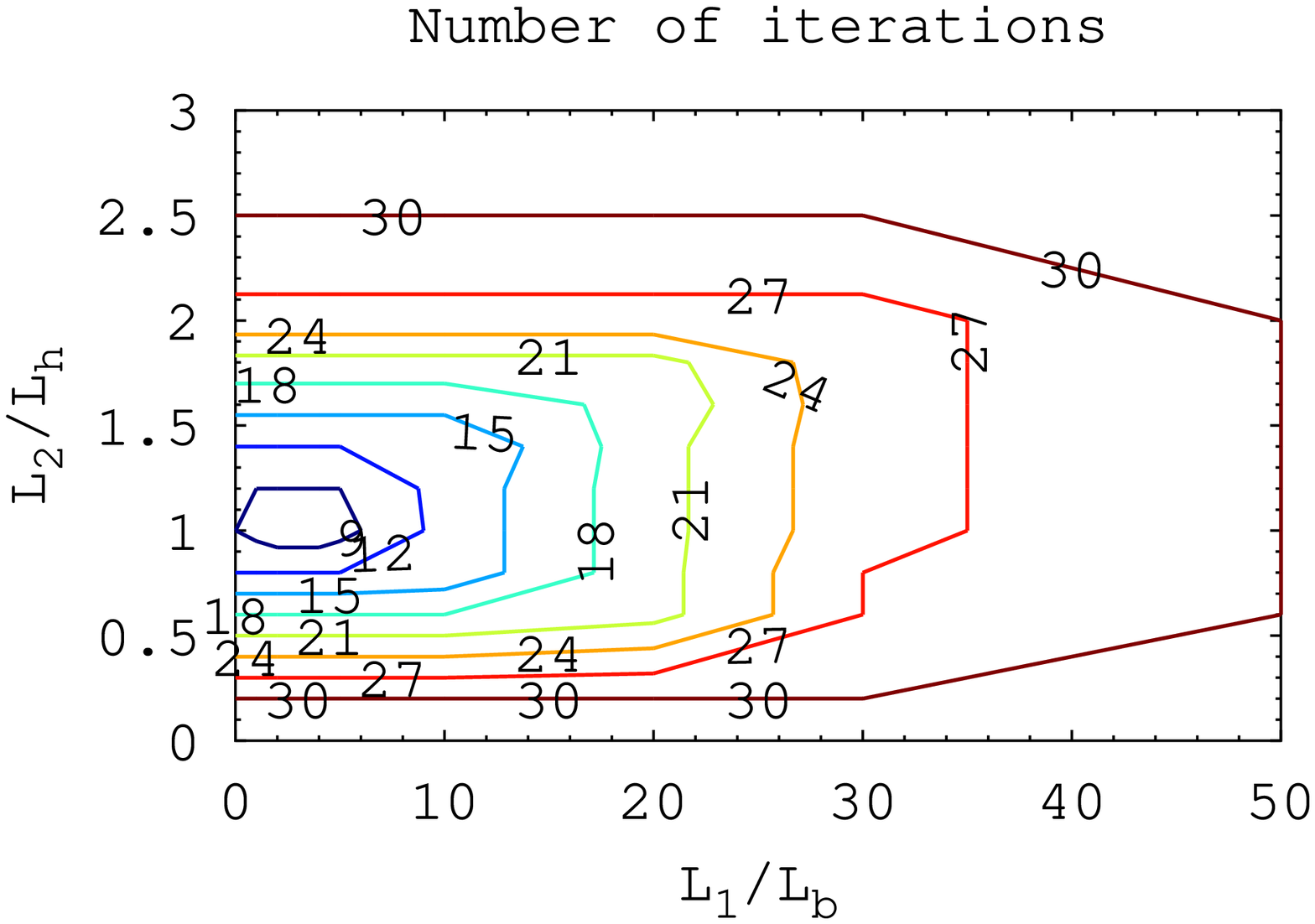} 
        \caption{Splitting}
       \label{MFL1L2_SimpleNoLineal1}
    \end{subfigure}%
       ~ 
    \begin{subfigure}{0.5\textwidth}
    \centering
\includegraphics[scale=0.3]{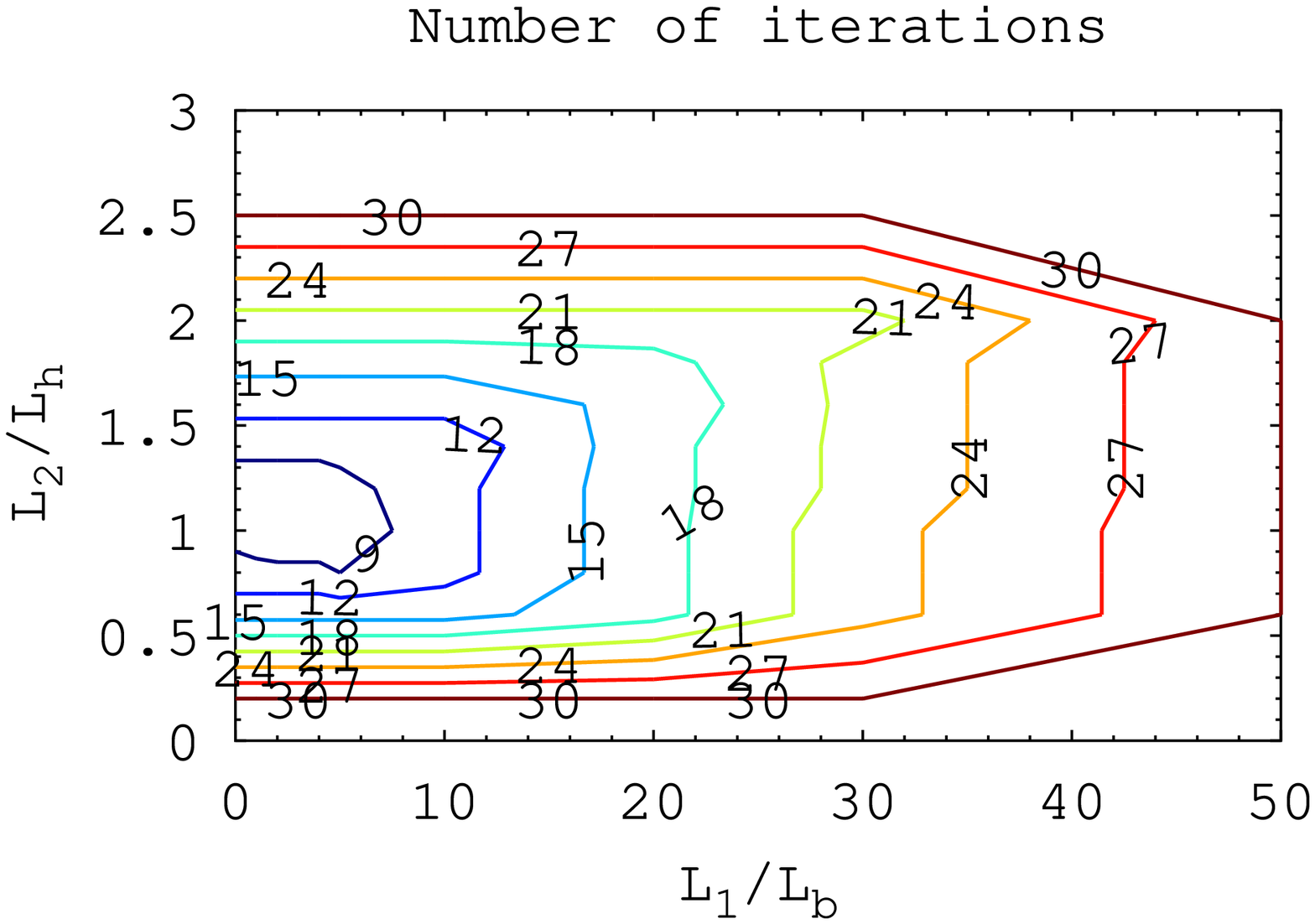}      
   \caption{Monolithic}
      \label{MFL1L2_SimpleNoLineal2}
    \end{subfigure}    
    \caption{Performance of the iterative schemes for different values of $L_1$ and $L_2$ for  test problem 2, case 2:  $b(p)=\frac{p+\sqrt[3]{p}}{M};\ $ $h(\divv \uu)=\lambda \divv \uu+\lambda \sqrt[3]{(\divv \uu)^5}$.}
    	    \label{MFL1L2_iterations2}
           % \label{solution}
\end{figure}

\begin{figure}[h!]
\centering
 \begin{subfigure}{0.5\textwidth}
 \centering
\includegraphics[scale=0.3,trim={0 0 10 0},clip]{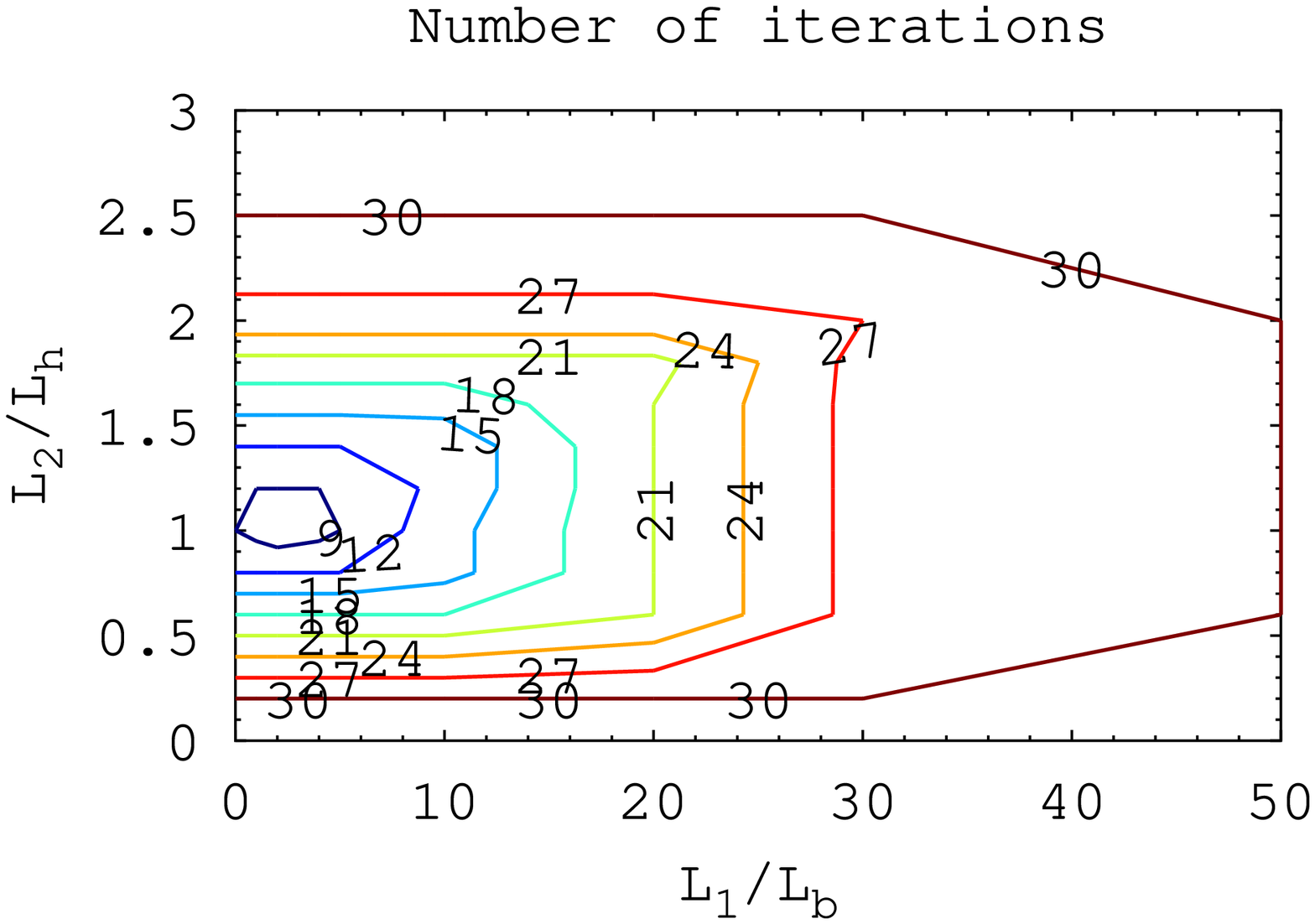} 
        \caption{Splitting}
       \label{MFL1L2_SimpleNoLineal1}
    \end{subfigure}%
       ~ 
    \begin{subfigure}{0.5\textwidth}
    \centering
\includegraphics[scale=0.3]{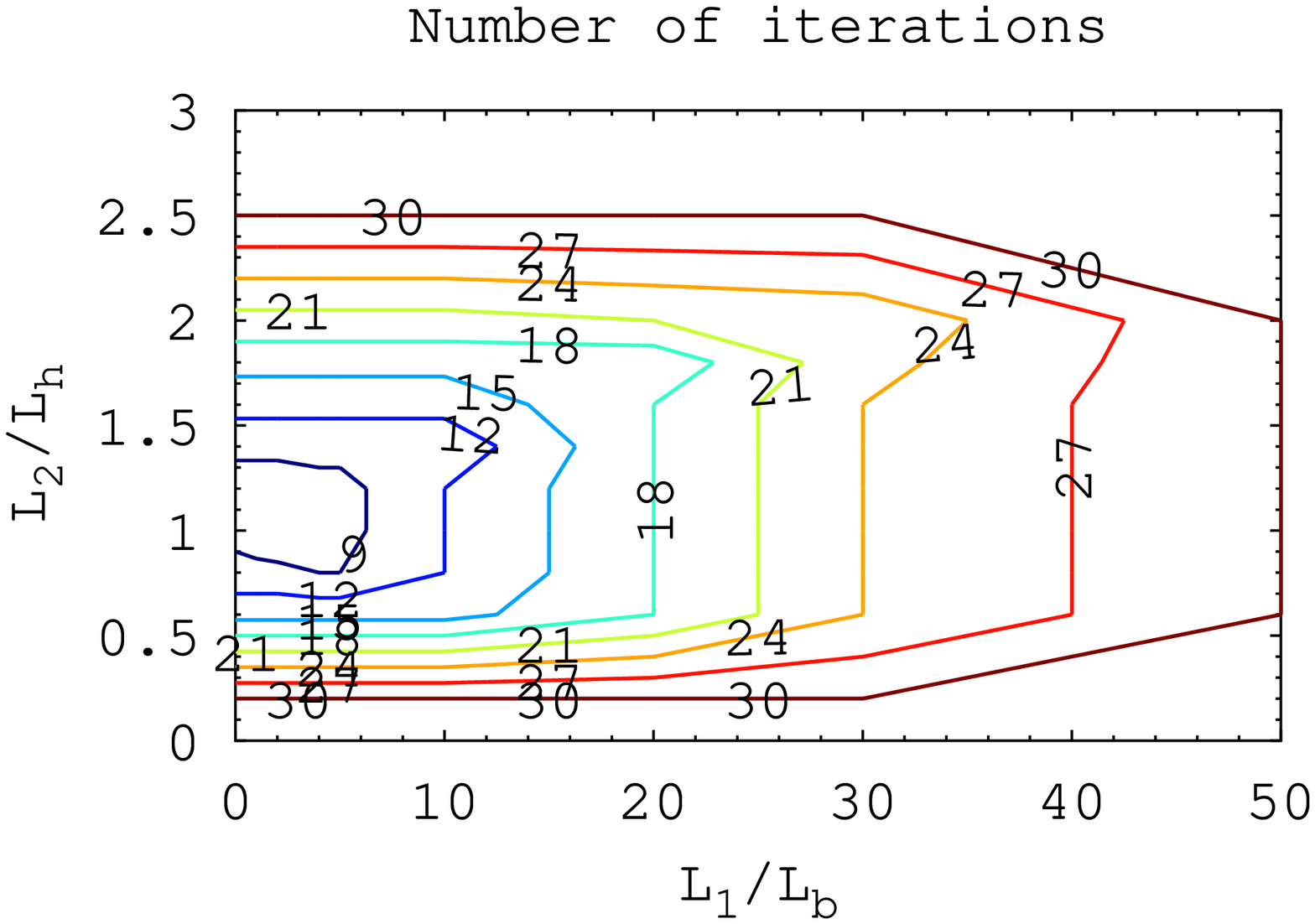}      
   \caption{Monolithic}
      \label{MFL1L2_SimpleNoLineal2}
    \end{subfigure}    
    \caption{Performance of the iterative schemes for different values of $L_1$ and $L_2$ for  test problem 2, case 3: $b(p)=\frac{e^p}{M};\ $ $h(\divv \uu)=\lambda \divv \uu+\lambda \sqrt[3]{(\divv \uu)^5}$.}
    	    \label{MFL1L2_iterations3}
           % \label{solution}
\end{figure}	 

%%%%%%%%%%%%%%%%%%%%%%%%%%%%%%%%%%%%%%%%%%%%%%%%%

\begin{figure}[h]
\centering
 \begin{subfigure}{0.5\textwidth}
 \centering
\includegraphics[scale=0.25,trim={0 0 0 0},clip]{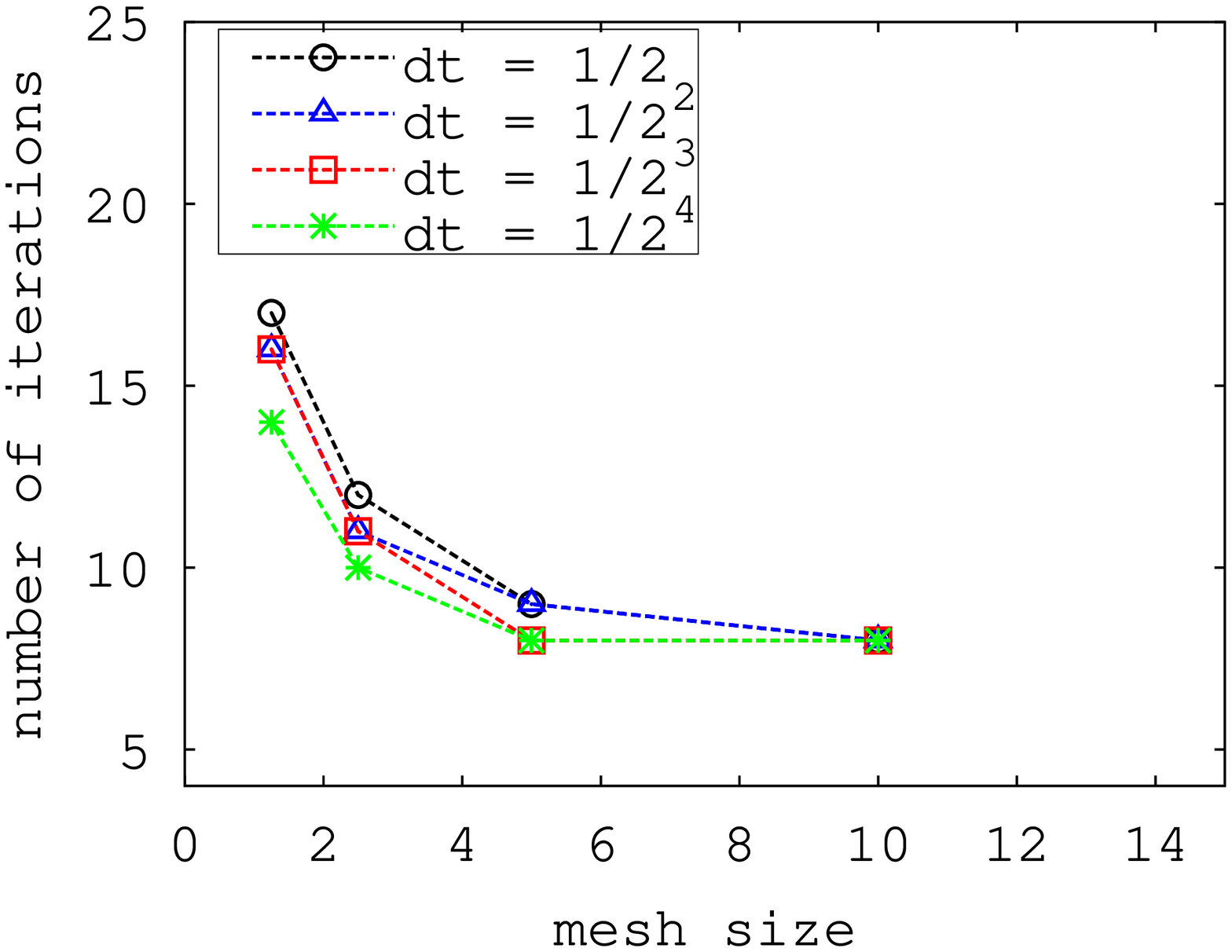} 
        \caption{Splitting}
       \label{dh_3S}
    \end{subfigure}%
       ~ 
    \begin{subfigure}{0.5\textwidth}
    \centering
\includegraphics[scale=0.25]{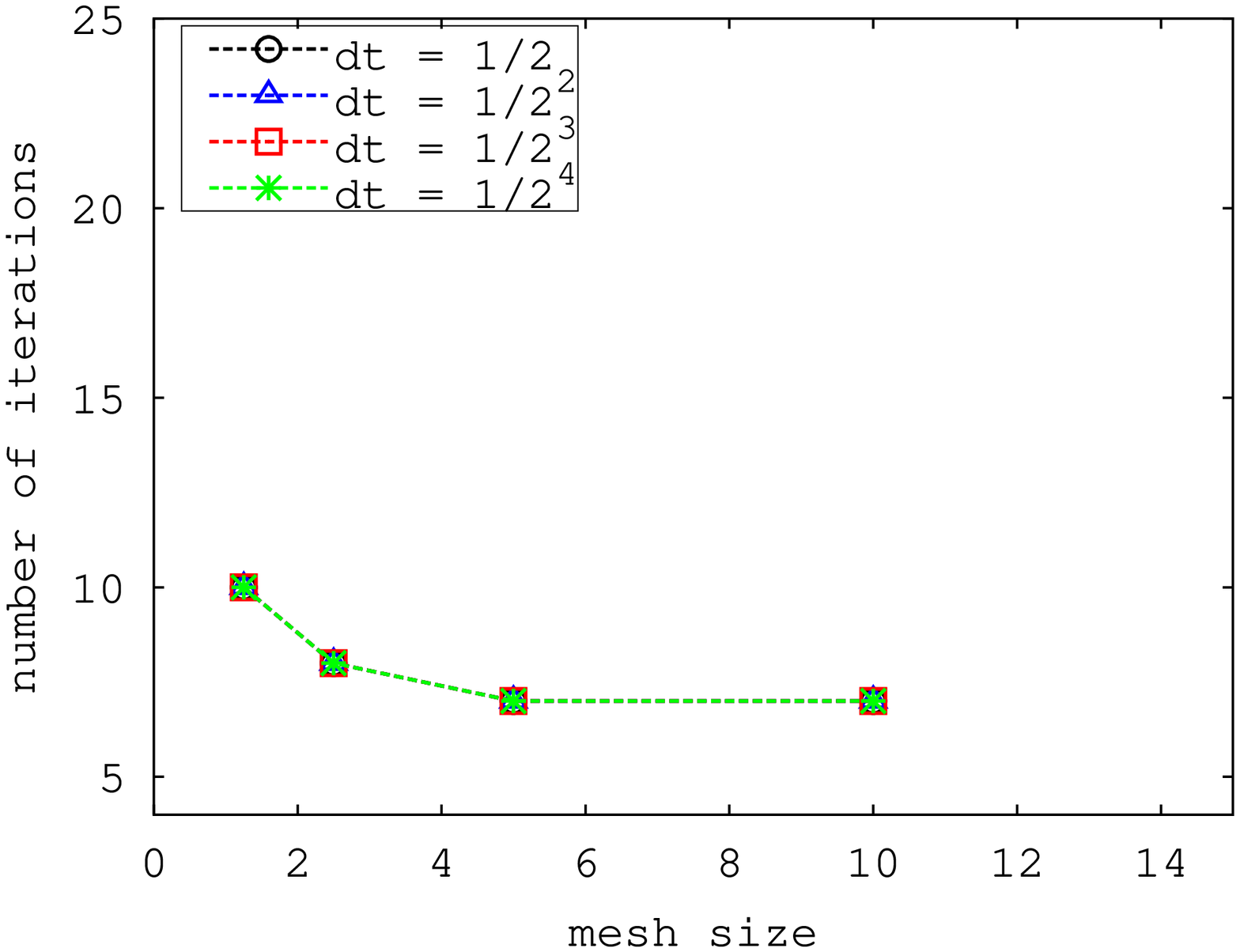}     
   \caption{Monolithic}
      \label{dh_3M}
    \end{subfigure}    
    \caption{Performance of the iterative schemes for different mesh sizes for test problem 2, case 1: $b(p)={\frac{p + p^{3}}{M}};\ $ $h(\divv \uu)=\lambda  \divv{\uu} +  \lambda (\divv{\uu})^{3}$.}
    	    \label{dh_3}
           % \label{solution}
\end{figure}

\begin{figure}[h]
\centering
 \begin{subfigure}{0.5\textwidth}
 \centering
\includegraphics[scale=0.25,trim={0 0 0 0},clip]{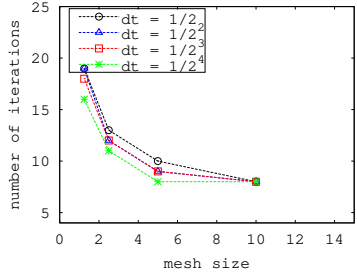} 
        \caption{Splitting}
       \label{dh_cbrt3S}
    \end{subfigure}%
       ~ 
    \begin{subfigure}{0.5\textwidth}
    \centering
\includegraphics[scale=0.25]{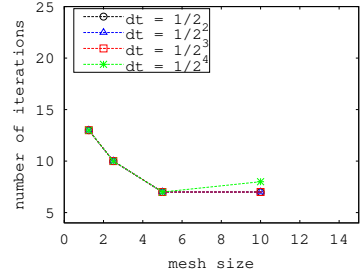}     
   \caption{Monolithic}
      \label{dh_cbrt3M}
    \end{subfigure}    
    \caption{Performance of the iterative schemes for different mesh sizes for test problem 2, case 2: $b(p)=\frac{p + \sqrt[3]{p}}{M};\ $  $h(\divv \uu)= \lambda  \divv{\uu} +  \lambda \sqrt[3]{(\divv  \uu)^5} $.}
    	    \label{dh_cbrt3}
           % \label{solution}
\end{figure}

%%%%%%%%%%%%%%%%%%%%%%%%%%%%%%%%%%%%%%%%%%%%%%%%555
			
\begin{figure}[h]
\centering
 \begin{subfigure}{0.5\textwidth}
 \centering
\includegraphics[scale=0.25,trim={0 0 0 0},clip]{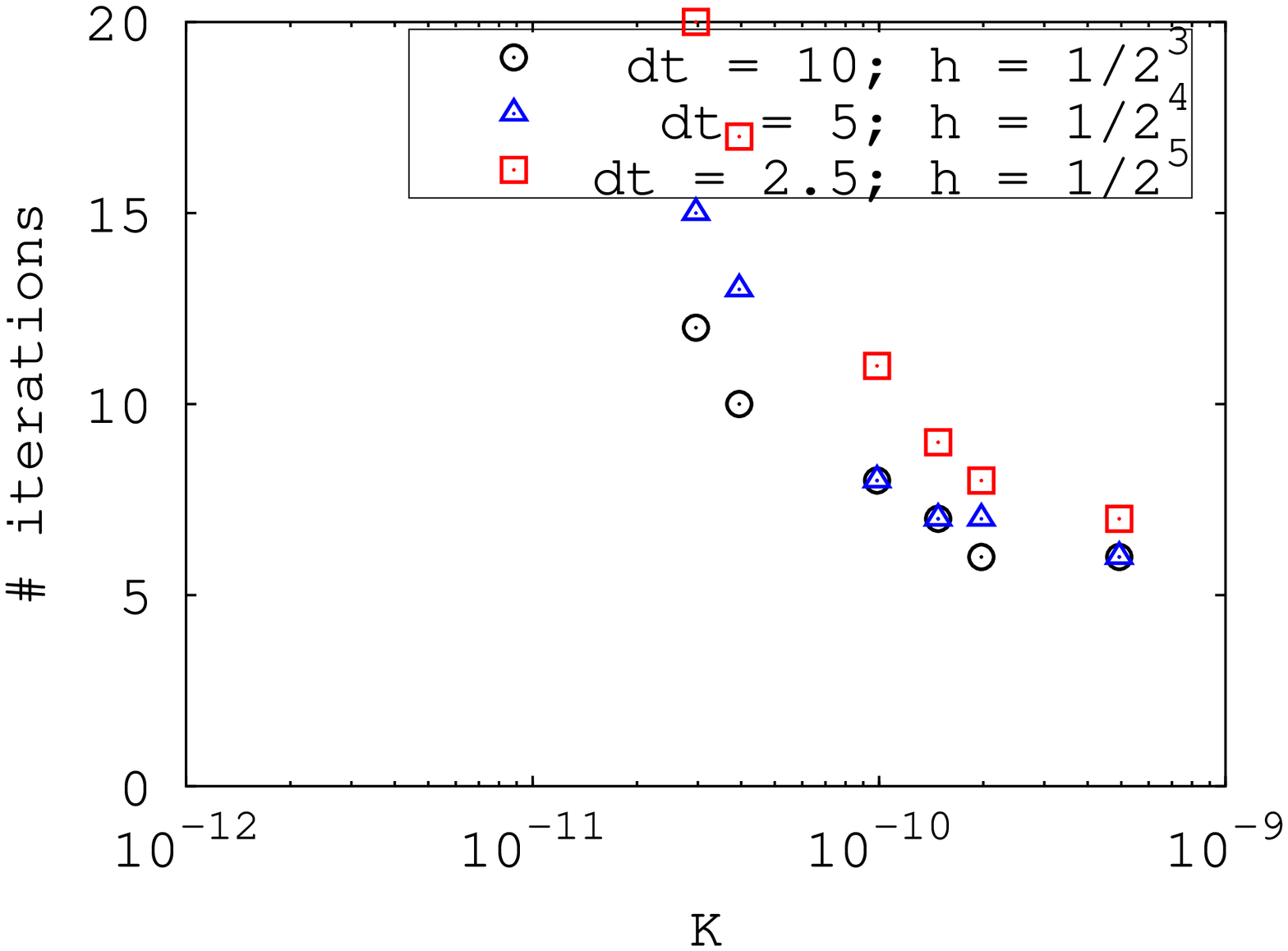} 
        \caption{Splitting}
       \label{KflorinS1}
    \end{subfigure}%
       ~ 
    \begin{subfigure}{0.5\textwidth}
    \centering
\includegraphics[scale=0.25]{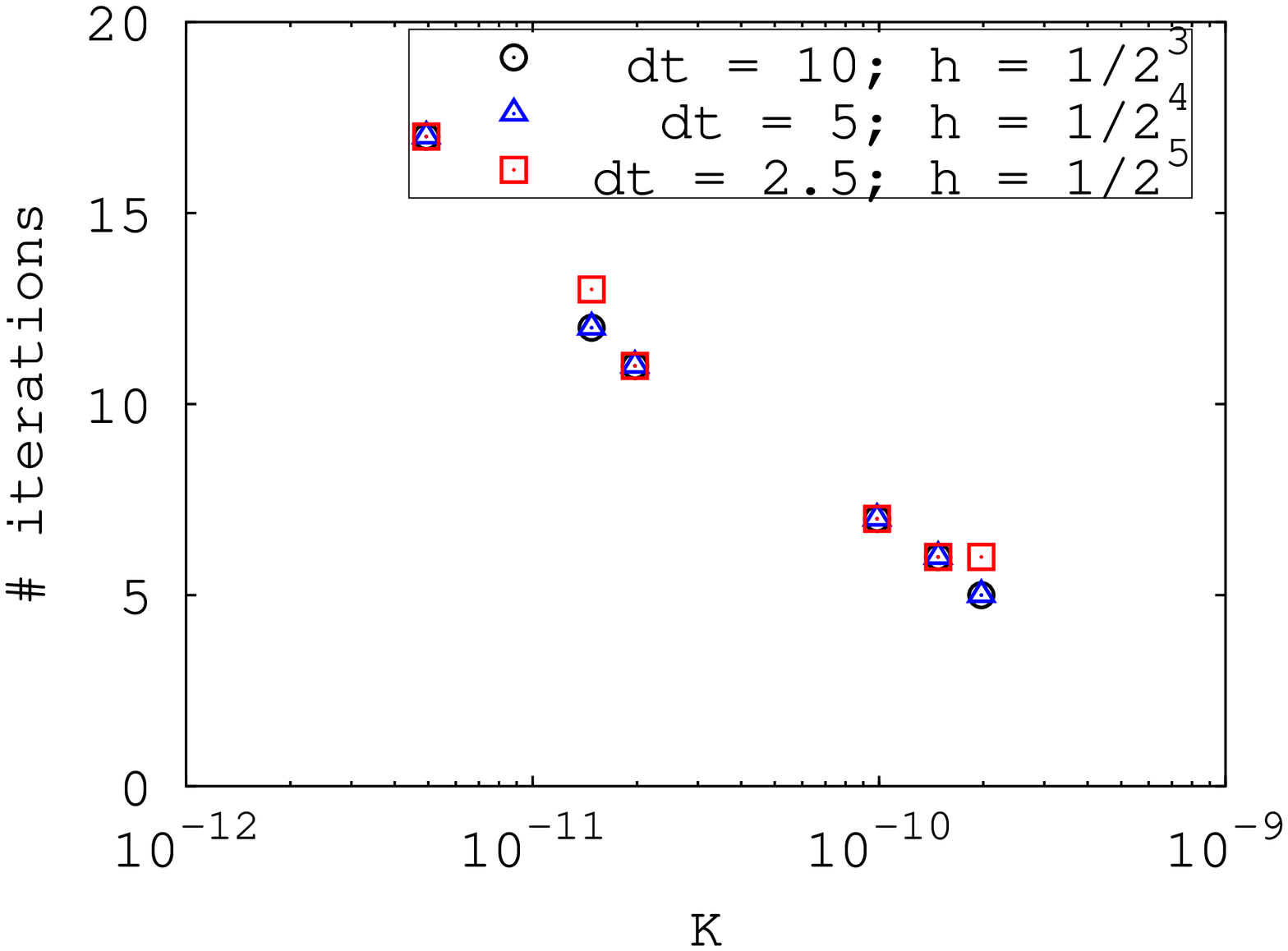}      
   \caption{Monolithic}
      \label{KflorinM1}
    \end{subfigure}    
    \caption{Number of iterations for different mesh sizes and different values of $\Delta t$, $K$  for test problem 2, case 2: $b(p)=\frac{p+\sqrt[3]{p}}{M};\ $ $h(\divv \uu)=\lambda \divv \uu+\lambda\sqrt[3]{(\divv \uu)^5})$.}
    	    \label{Kflorin1}
           % \label{solution}
\end{figure}

\begin{figure}[h!]
\centering
 \begin{subfigure}{0.5\textwidth}
 \centering
\includegraphics[scale=0.25,trim={0 0 0 0},clip]{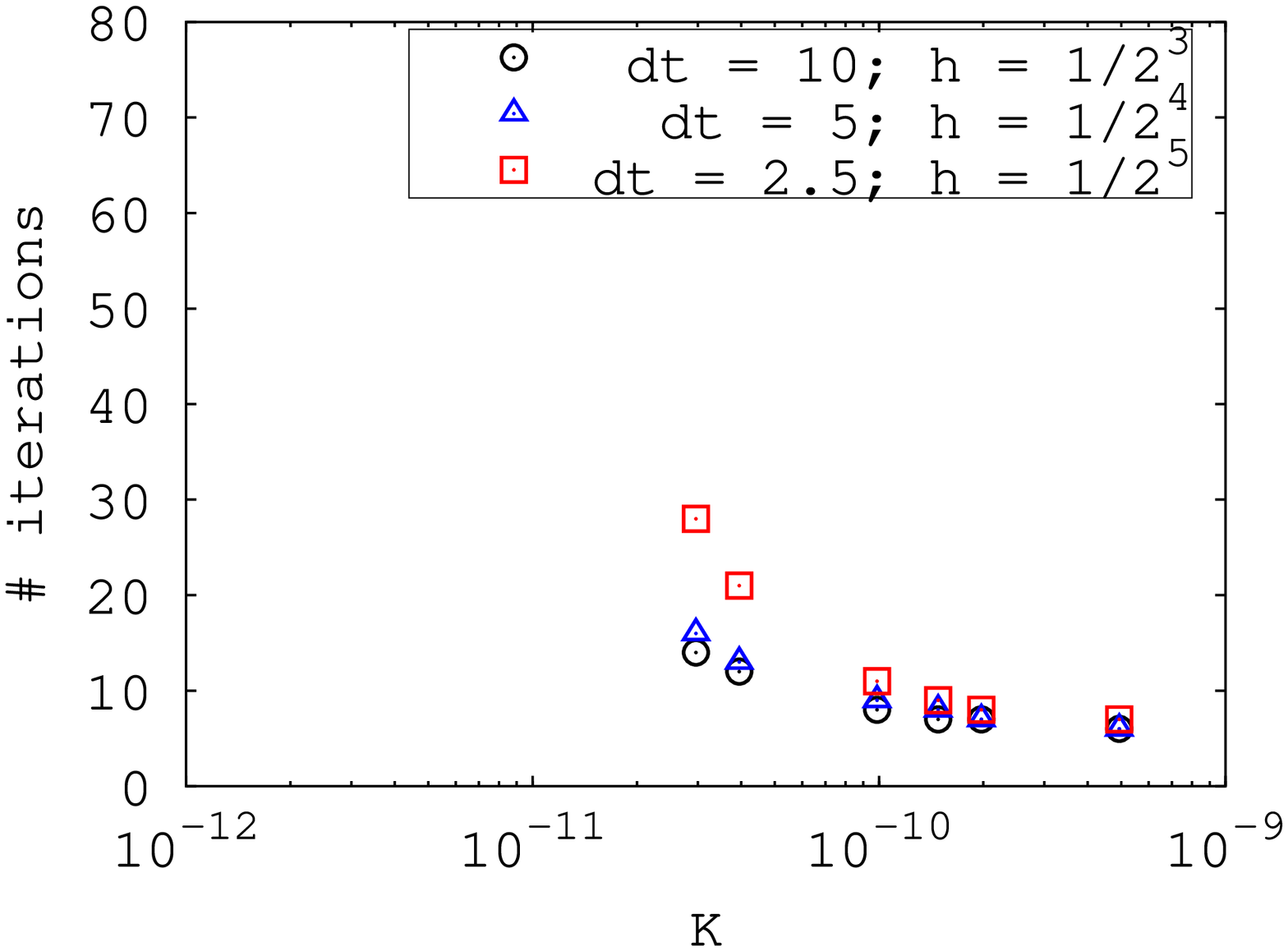} 
        \caption{Splitting}
       \label{KflorinS}
    \end{subfigure}%
       ~ 
    \begin{subfigure}{0.5\textwidth}
    \centering
\includegraphics[scale=0.25]{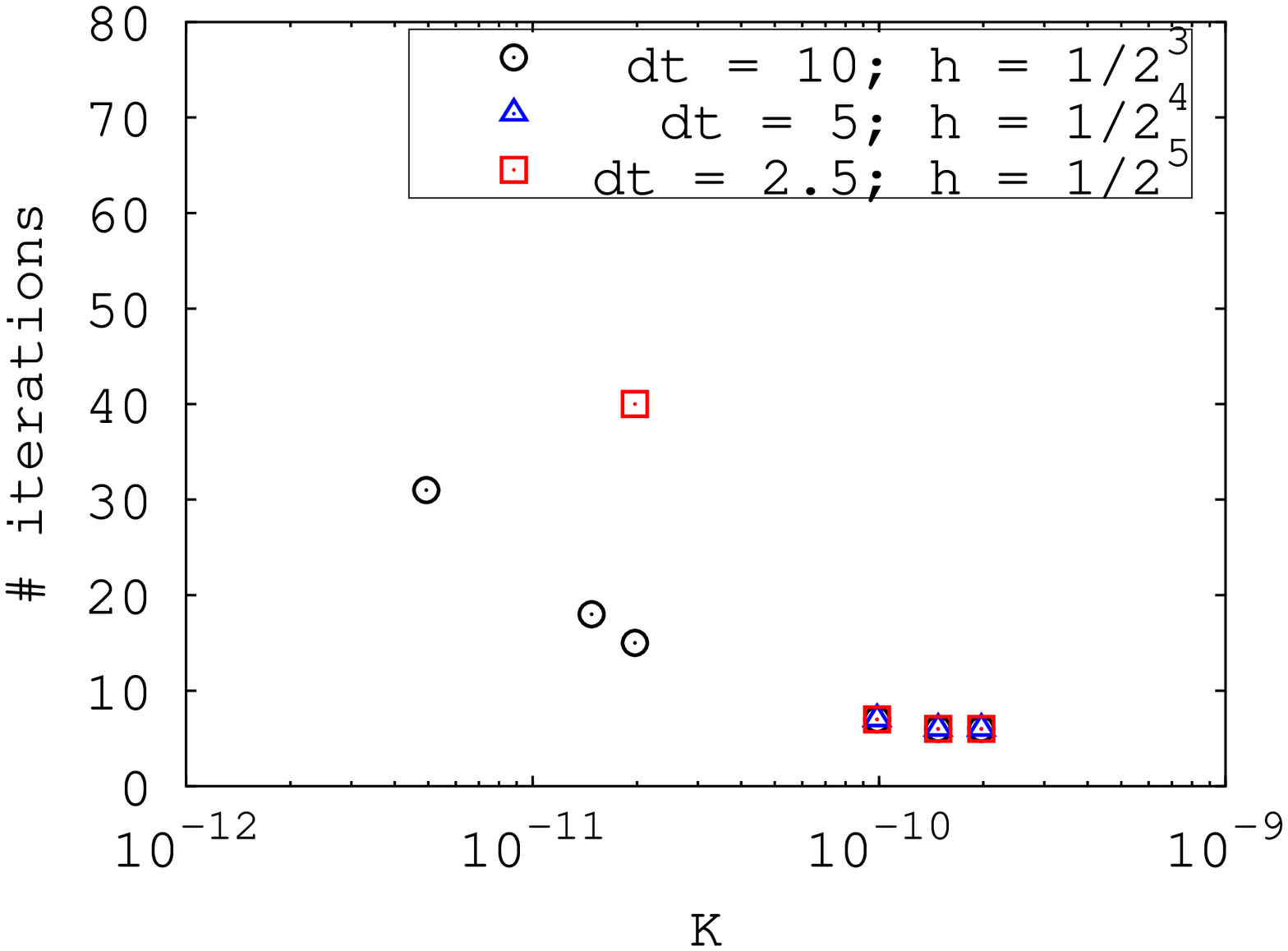}      
   \caption{Monolithic}
      \label{KflorinM}
    \end{subfigure}    
    \caption{Number of iterations for different mesh sizes and different values of $\Delta t$, $K$  for test problem 2, case 3: $b(p)=\frac{e^p}{M};\ $ $h(\divv \uu)=\lambda \divv \uu+\lambda \sqrt[3]{(\divv \uu)^5}$.}
    	    \label{Kflorin}
           % \label{solution}
\end{figure}	 

%\FloatBarrier

\begin{figure}[h!]
\centering
 \begin{subfigure}{0.5\textwidth}
 \centering
\includegraphics[scale=0.25,trim={0 0 0 0},clip]{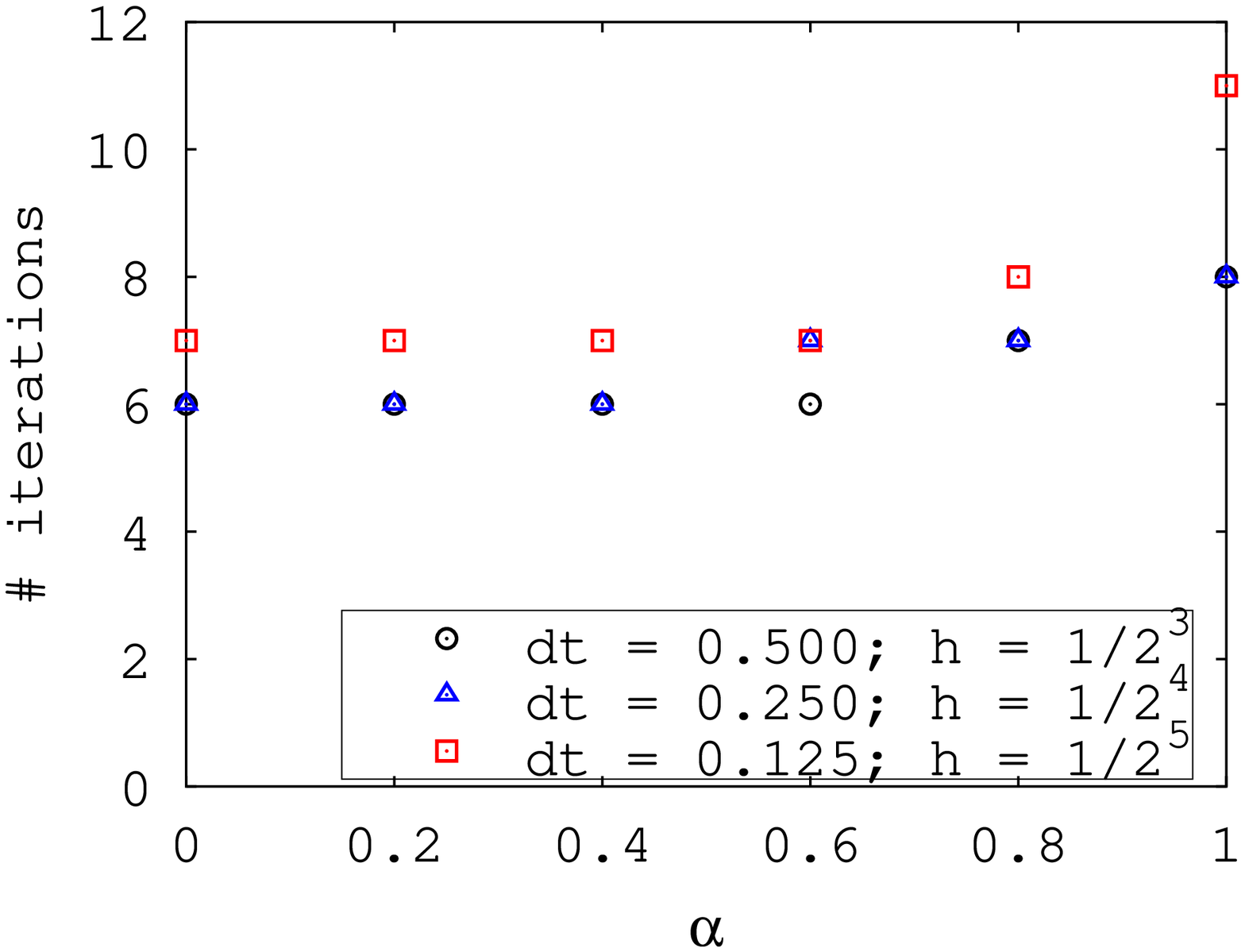} 
        \caption{Splitting}
       \label{MFL1L2_SimpleNoLineal1}
    \end{subfigure}%
       ~ 
    \begin{subfigure}{0.5\textwidth}
    \centering
\includegraphics[scale=0.25]{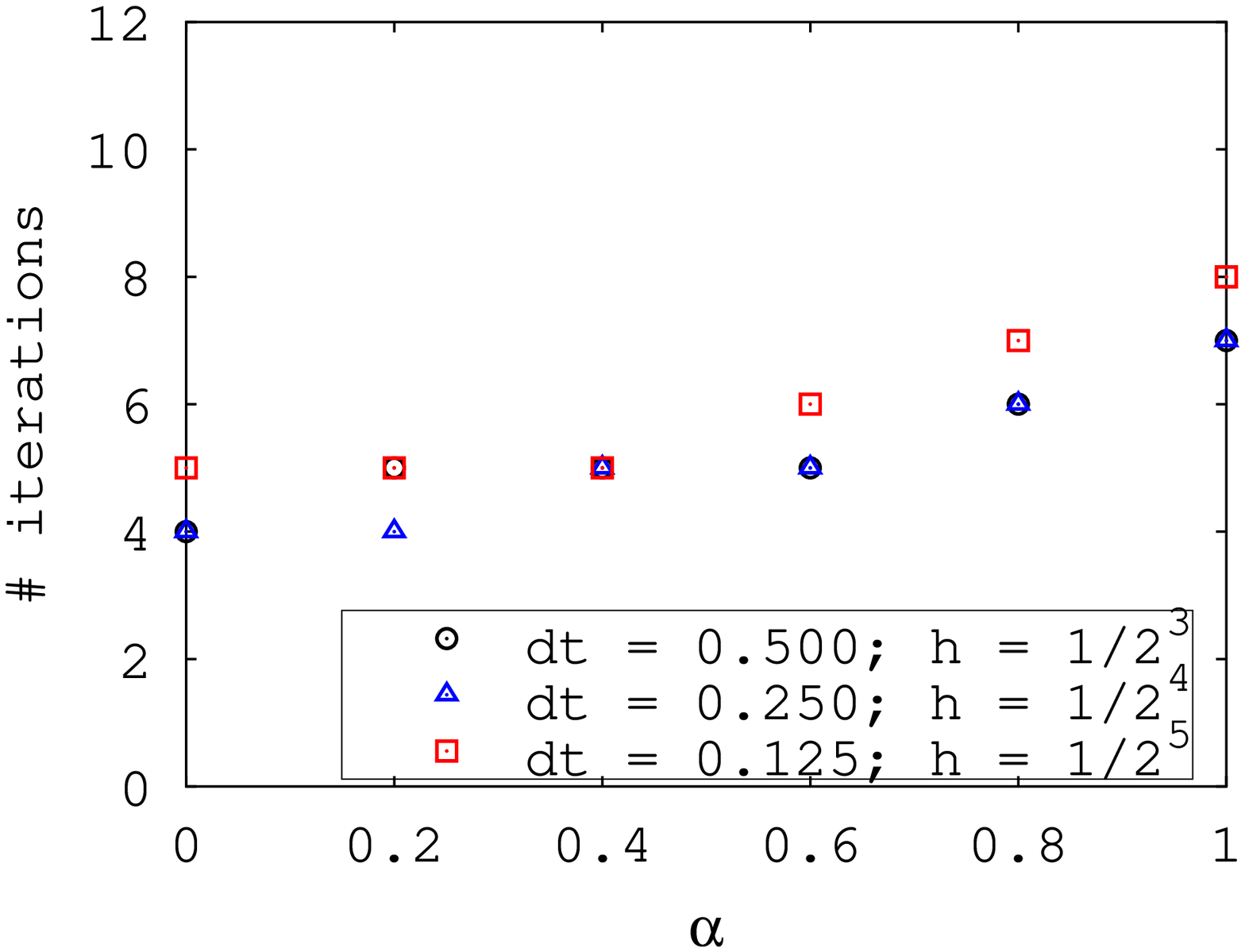}      
   \caption{Monolithic}
      \label{MFL1L2_SimpleNoLineal2}
    \end{subfigure}    
    \caption{Number of iterations for different mesh sizes and different values of $\Delta t$, $\alpha$  for test problem 2, case 2: $b(p)=\frac{p+\sqrt[3]{p}}{M};;\ $ $h(\divv \uu)=\lambda\divv \uu+\lambda \sqrt[3]{(\divv \uu)^5}$.}
    	    \label{AFL1L2_iterations3}
           % \label{solution}
\end{figure}

\begin{figure}[h]
\centering
 \begin{subfigure}{0.5\textwidth}
 \centering
\includegraphics[scale=0.25,trim={0 0 0 0},clip]{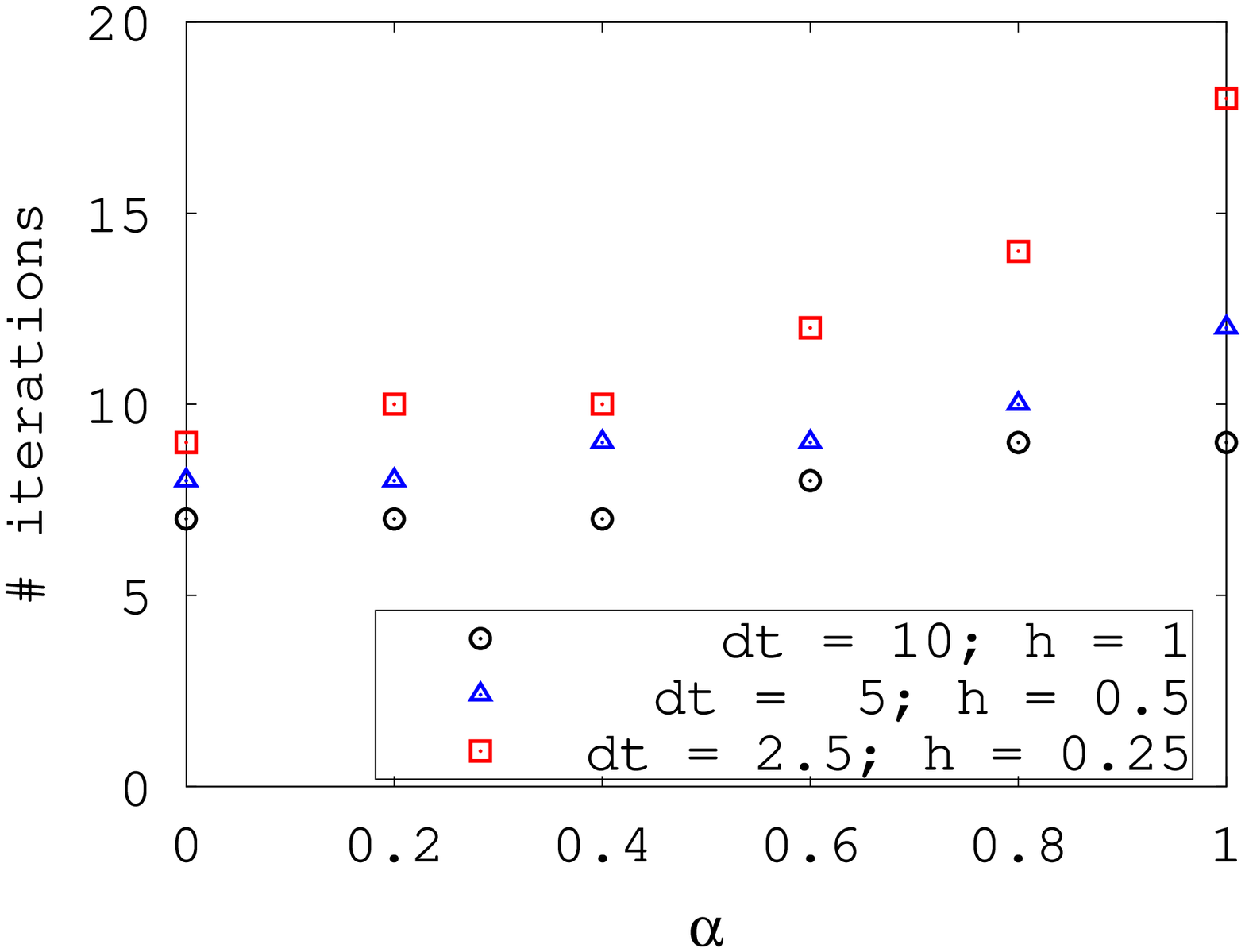} 
        \caption{Splitting}
       \label{FL1L2_SimpleNoLineal1}
    \end{subfigure}%
       ~ 
    \begin{subfigure}{0.5\textwidth}
    \centering
\includegraphics[scale=0.25]{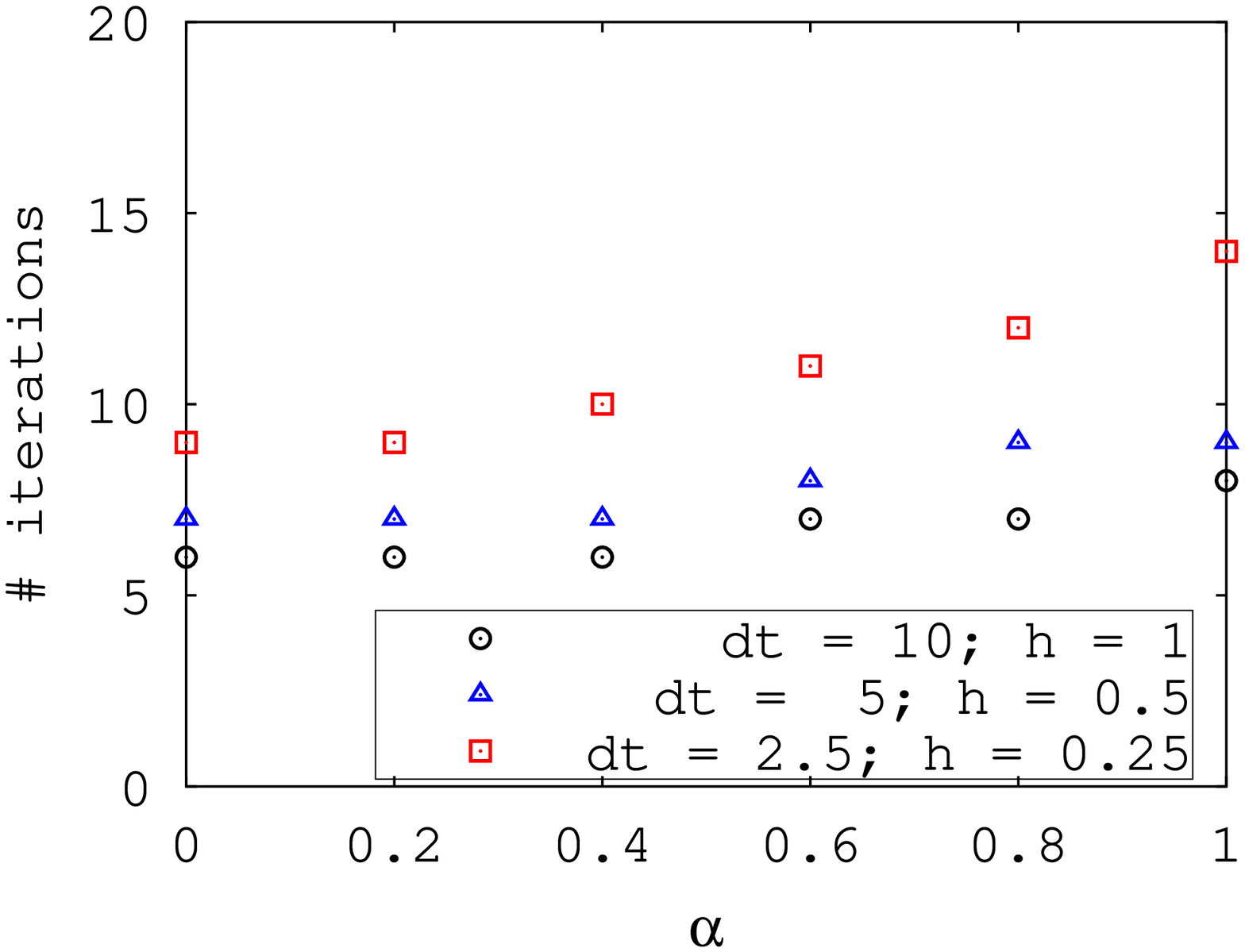}      
   \caption{Monolithic}
      \label{AFL1L2_SimpleNoLineal2}
    \end{subfigure}    
    \caption{Number of iterations for different mesh sizes and different values of $\Delta t$, $\alpha$  for test problem 2, case 3: $b(p)=\frac{e^p}{M};\ $ $h(\divv \uu)=\lambda \divv \uu+\lambda \sqrt[3]{(\divv \uu)^5}$.}
    	    \label{AFL1L2_iterations10}
           % \label{solution}
\end{figure}	 

%\FloatBarrier

%\clearpage
 \section{Conclusions}
 \label{sec:conclusions}

We have proposed two linearisation schemes for the non-linear Biot model, a monolithic L-scheme and a splitting L-scheme. The convergence of both schemes has been rigorously shown, similar techniques as in \cite{RaduPopKnabner2004,RaduNPK15,List2016} and \cite{Mikelic2} being involved. The schemes are linearly, but global convergent and they are not involving the computations on any derivatives (as in the case of the Newton method). Two illustrative numerical examples, an academic one and a non-linear extension of Mandel's problem were implemented for testing the performance of the schemes. To summarise, we make the following remarks:
\begin{itemize}
\item[$\bullet$]{Both schemes are very robust with respect to the choice of the tuning parameter, the mesh size and time step size.}
\item[$\bullet$]{The tuning parameters $L_1, L_2$ have a strong influence on the speed of the convergence, with $L_2$ being the dominant one.}
\item[$\bullet$]{The two schemes (splitting and monolithic) performed similarly with respect to parameters $L_1, L_2$, CPU time and number of iterations when no preconditioning was applied. When preconditioned, the splitting L-scheme is faster. }
\item[$\bullet$]{The splitting L-scheme can be used both as a robust solver or as a preconditioner to improve the performance of a monolithic solver}
\item[$\bullet$]{The convergence of the schemes is faster for higher permeability. }
\item[$\bullet$]{The convergence of the schemes is almost independent of the mesh size, and varies only slightly with the Biot coupling parameter.}
\end{itemize}

According to the results in Section \ref{sec:convergence}, we should observe a faster convergence for the pressure variable when we increase the time step size.
Nevertheless, there is no indication on how the time step size is affecting the coupled problem. Numerically, we observed decreasing the time step only for the Mandel's problem. It was not observed a clear tendency for example one.

%In general, the monolithic scheme needs one or two less iteration to converge than splitting scheme. We have seems numerically that the faster convergence is reach when $L_1=L_b$ and $L_2=\frac{L_h}{2}$. Nevertheless, those Lipschitz constants should be estimated in case of problems with unknown solutions. Surprisingly, both numerical schemes converge fast even by setting $L_1=0$. Besides, for practical applications it's needs just a rough estimate of $L_b$ to have faster convergence.  Unfortunately, the convergence is slow by setting $L_2=0$  for most of the cases tested. Thus, it's important to have a good estimate of $L_h$ to set properly the linearisation parameter $L_2$.  

%The monolithic and splitting schemes are independent of the mesh size, time step and $\alpha$. Nevertheless, the number iteration would increase for small values of permeability.

%Above all, the splitting scheme proposed shows great flexibility because it can be applied even as a preconditioner when a monolithic solver is implemented.

%  \left\{
\nocite{Explicit1}
\nocite{Explicit4}
\nocite{Fullycoupled3}
\nocite{Jha2007}
\nocite{Fullycoupled1}
\nocite{Explicit2}
\nocite{Biotsttlement}
\nocite{Chin}
\nocite{Coussy1989}
\nocite{Fung}
\nocite{Girault}
\nocite{Kim20111591}
\nocite{Kim20111591}
\nocite{mikelic2013phase}
\nocite{SHOWALTER_2016}
\nocite{Settari}

% Referencias Jakub

\nocite{Settari_1998}
\nocite{Tchelepi_2015}
\nocite{Ferronato_2016}
\nocite{Kundan_2016}

%For a discussion of locking phenomena arising in poroelasticity and remedies we refer to [19, 24, 25] and the references therein. 

\nocite{Jan_2016}
\nocite{Carmen_2016}
\nocite{SHOWALTER_2016}
\nocite{RaduPopKnabner2004}
\nocite{List2016}
\nocite{RaduNPK15}
% \clearpage
 	%	
\nocite{Tchelepi_2016_Preconditionet}
\nocite{Naga_2010_preconditioner}

\nocite{Uwe3}

%\FloatBarrier
\section*{Acknowledgement}
The research was supported by the University of Bergen in cooperation with the FME-SUCCESS center (grant 193825/S60) funded by the Research Council of Norway.  The work has also been partly supported by the NFR-DAADppp grant 255715 and the NFR-Toppforsk project 250223.
%\newpage

%\clearpage
%\begin{acknowledgements}
%If you'd like to thank anyone, place your comments here
%and remove the percent signs.
%\end{acknowledgements}
%
% BibTeX users please use one of
%\bibliographystyle{spbasic}      % basic style, author-year citations
\bibliographystyle{spmpsci}      % mathematics and physical sciences
\bibliographystyle{plain}

  % name your BibTeX data base

\end{document}